\tikzset{degil/.style={
            decoration={markings,
            mark= at position 0.5 with {
                  \node[transform shape] (tempnode) {$\backslash$};
                  }
              },
              postaction={decorate}
}
}
\DeclareMathOperator{\ann}{ann}
\DeclareMathOperator{\lann}{l.ann}
\DeclareMathOperator{\im}{Im}
\DeclareMathOperator{\morp}{End}
\DeclareMathOperator{\id}{Id}
\DeclareMathOperator{\soc}{soc}
\DeclareMathOperator{\pur}{Pur}
\DeclareMathOperator{\udim}{u.dim}
\DeclareMathOperator{\cent}{Cent}
\theoremstyle{plain}
\newtheorem{theorem}{Theorem}[section] 
\newtheorem{lemma}[theorem]{Lemma}
\newtheorem{proposition}[theorem]{Proposition}
\newtheorem{corollary}[theorem]{Corollary}
\newtheorem{question}[theorem]{Question}
\theoremstyle{definition}
\newtheorem{definition}[theorem]{Definition}
\newtheorem{example}[theorem]{Example}
\theoremstyle{remark}
\newtheorem{remark}[theorem]{Remark} 
\newtheorem{note}[theorem]{Note}
\begin{document}

\pagenumbering{arabic}

\title{On Modules Whose Pure Submodules Are Essential in Direct Summands}

\author[Gupta]{Kaushal Gupta}
\address{(Gupta) Applied Sciences Cluster, School of Advanced Engineering, University of Petroleum and Energy Studies, Dehradun, Uttarakhand, India-248007}
\email{kaushal.gupta@ddn.upes.ac.in}

\author[Gera]{Theophilus Gera} 
\address{(Gera) Department of Mathematics, Sardar Vallabhbhai National Institute of Technology, Surat, Gujarat, India-395007}
\email{geratheophilus@gmail.com}
\thanks{The second author would like to thank the Department of Education, Government of India, for the financial assistance.}

\author[Sharma]{Amit Sharma} 
\address{(Sharma) Department of Mathematics, Sardar Vallabhbhai National Institute of Technology, Surat, Gujarat, India-395007}
\email{amitsharma@amhd.svnit.ac.in}

\author[Gupta]{Ashok Ji Gupta}
\address{(Gupta) Department of Mathematical Sciences, Indian Institute of Technology (B.H.U.), Varanasi, Uttar Pradesh, India-221005}
\email{agupta.apm@itbhu.ac.in}

\date{\today}

\begin{abstract}
    We introduce the notion of \emph{pure extending modules}, a refinement of classical extending modules in which only pure submodules are required to be essential in direct summands. Fundamental properties and characterizations are established, showing that pure extending and extending modules coincide over von Neumann regular rings. As an application, we prove that pure extending modules admit decomposition patterns analogous to those in the classical theory, including a generalization of the Osofsky–Smith theorem: a cyclic module whose proper factor modules are pure extending decomposes into a finite direct sum of pure-uniform submodules. Additionally, we resolve an open problem of Dehghani and Sedaghatjoo by constructing a centrally quasi-morphic module that is not centrally morphic, arising from the link between pure-extending behavior and nonsingularity in finitely generated modules over noetherian rings.
\end{abstract}

\subjclass[2020]{Primary: 16D10, 16D50; Secondary: 16E50, 16S50}

\keywords{Pure extending modules, Centrally morphic modules, Strongly $\pi$-endoregular modules, $\Sigma$-Rickart modules}

\maketitle
\setcounter{tocdepth}{3}

\tableofcontents

\section{Introduction}

The classical notion of extending modules, where every submodule is essential in a direct summand, plays a foundational role in the decomposition theory of modules. Such modules satisfy the $C_1$ condition, a structural strengthening of summand-closure that has been widely studied in relation to torsion theories, purity, and homological algebra. However, in many algebraic settings governed by homological purity, particularly those involving flatness, divisibility, or exactness under tensor, this condition proves overly rigid. Motivated by this, we propose a conceptual refinement: requiring only that \emph{pure submodules} be essential in direct summands.

This shift captures how, for pure extending modules, the focus moves from submodule inclusion to the homological traceability of pure exact sequences. This perspective motivates the study of pure extending modules as a homological analogue of extending modules within the framework of pure-exact decompositions. We introduce and study the class of \emph{pure extending modules}, in which pure submodules (i.e., those that preserve exactness under tensor products or reflect ideal-wise divisibility) are required to be essential in direct summands. This perspective reorients the $C_1$ condition in terms of exactness behavior, offering a natural and flexible framework for analyzing decomposition in both classical and relative module-theoretic settings.

The study of essential embeddings and summand structures has deep roots, beginning with von Neumann's foundational work on continuous geometries and Utumi's analysis of rings where every left ideal is essentially contained in a projective summand \cite{utumi1965continuous}. These ideas were extended to modules by Jeremy \cite{jeremy1971sur} and further developed through various generalizations. Independently, Chatters and Hajarnavis introduced CS-modules (``complements are summands'') in \cite{chatters1977rings}, while Harada coined the term ``extending module,'' which later became central in modern module theory. M\"{u}ller systematized these ideas through the now-standard $C_1$, $C_2$, and $C_3$ conditions \cite{mohamed1990continuous}. The present work builds on this lineage by proposing a structurally natural weakening of the $C_1$ condition grounded in purity.

To formalize our main definition: a module $M$ is said to be \emph{pure extending} if every pure submodule of $M$ is essential in a direct summand. This condition weakens the classical requirement that \emph{all} submodules be essential in direct summands, yet it preserves much of the decomposability behavior central to extending theory. In particular, pure extending modules retain strong control over pure submodules and factor modules, interact naturally with torsion and flatness conditions, and provide a framework for analyzing decompositions governed by purity rather than full essentiality.

We position pure extending modules within a lattice of injectivity-related conditions. A module is \emph{pure-injective} if all pure extensions split, and \emph{quasi-pure-injective} if it is pure-injective relative to its own submodules \cite{harmanci2015pure}. Pure extending modules weaken these conditions by replacing splitting with essentiality. The classical pattern between injective, quasi-injective, and extending modules carries over to the pure setting, yielding the following hierarchy:
\[
\begin{tikzcd}[column sep=huge]
\text{Injective} \ar[r,Rightarrow] \ar[d,Rightarrow] & \text{Quasi-injective} \ar[r,Rightarrow] \ar[d,Rightarrow] & \text{Extending} \ar[d,Rightarrow] \\
\text{Pure-injective} \ar[r,Rightarrow] & \text{Quasi-pure-injective} \ar[r,Rightarrow] & \text{Pure extending}
\end{tikzcd}
\]
None of the implications is reversible in general, as shown through examples (see Example \ref{PE=/> E} and \cite{fieldhouse1969pure}). Hence, pure extending modules form the natural bridge between decomposability via essential submodules and purity-based control over module structure.

Several classical questions concerning extending modules remain unresolved. Chief among these is understanding how decomposability behaves under direct sums and factor constructions. Specifically, one seeks to determine when the direct sum of extending modules remains extending, and whether the property of being extending descends to factor modules.

\begin{question}\label{q1.1}
\begin{enumerate}
    \item[(1)] If $M = M_1 \oplus M_2$ and each $M_i$ is extending, is $M$ necessarily extending?
    \item[(2)] If all factor modules of $M$ are extending, must $M$ decompose as a finite direct sum of uniform modules?
\end{enumerate}
\end{question}

Although finite direct sums of extending modules are not necessarily extending, Birkenmeier, M\"{u}ller, and Rizvi \cite{birkenmeier2002modules} established that such sums are always FI-extending. We contribute a partial resolution by showing that pure extending modules form a class closed under finite direct sums (see Theorem~\ref{direct sum}). Moreover, by an earlier structural characterization (see Proposition~\ref{von-Neumann regular <=> purec1 -> c1}), we prove that over a von Neumann regular ring, pure extending and extending modules coincide. Consequently, Question~\ref{q1.1}(1) admits an affirmative answer in this setting (see Corollary~\ref{vnr, closure extending}). Nevertheless, regularity is not essential to our analysis, and we construct examples both supporting and refuting closure beyond this context (see Example~\ref{extending-not-closed}).

To broaden the theory, we also introduce \emph{RD-pure extending modules}, defined using element-wise divisibility ($rP = rM \cap P$ for all $r \in R$). While every pure submodule is RD-pure, the converse fails in general. In this direction, we show that RD-pure extending modules strictly contain pure extending modules (see Example \ref{RD-pure not pure}), but under additional conditions, such as flatness over right perfect rings, the two notions coincide (see Corollary \ref{RD-pure => pure}). This makes RD-pure extending modules a flexible tool for controlling decomposability in broader settings.

The theory developed here yields two core applications: (a) decomposition theorems generalizing the Osofsky--Smith result to purity-sensitive contexts, and (b) a resolution to a recent question raised by Dehghani and Sedaghatjoo about the morphic behavior of modules under central constraints.

To start with, we generalize a decomposition problem posed by Osofsky and Smith \cite{osofsky1991cyclic}, who asked whether a cyclic module with all cyclic submodules completely extending decomposes into uniform summands. Under a purity assumption, we prove that a cyclic module whose proper factor modules are pure extending decomposes into a finite direct sum of pure-uniform modules (see Theorem \ref{cyclic fac PE => uni. submod.}). The proof involves endoartinian decomposition theorems and a finiteness condition on pure uniform submodules. We also provide partial answers to Question~\ref{q1.1}(2) and refine these under assumptions (see Corollary \ref{coropen1}).

Finally, we address a recent question posed by Dehghani and Sedaghatjoo \cite{dehghani2025centrally} concerning whether every centrally quasi-morphic module must be centrally morphic. Using the connection between nonsingular pure-extending modules and $\Sigma$-Rickart properties, we construct a counterexample demonstrating that this implication fails (see Example~\ref{counter1}). We also identify conditions under which the equivalence is restored, notably for finitely generated, nonsingular, pure-extending, and strongly $\pi$-endoregular modules (see Theorems~\ref{main1}–\ref{main2}, Proposition~\ref{propopen}, and Corollary~\ref{coropen}). In doing so, we show that the equivalence between central morphicity and central quasi-morphicity asserted in \cite{dehghani2025centrally} fails under the stated hypotheses (see Remark \ref{dehghanirmk}).

Throughout, all rings are associative with identity and all modules are unital right modules unless stated otherwise. For a module $M$, we write $\morp_R(M)$ (or $S$) for the endomorphism ring, acting on the left. We use $\leq$, $\leq^\oplus$, and $\leq_e$ for submodule inclusion, direct summand, and essential submodule, respectively. A module is \emph{nonsingular} if its singular submodule vanishes, and \emph{uniform} if all nonzero submodules intersect nontrivially. Standard references include \citelist{\cite{lam2001firstcourse} \cite{lam1999lectures} \cite{wisbauer1991foundations}}. 

\section{Properties and Characterizations} \label{sec2}

\subsection{Pure Extending Modules}

The notion of \emph{pure extending modules} emerges from an effort to reconcile two foundational ideas in module theory: essential extensions and homological purity. While extending modules demand that every submodule be essential in a direct summand, this requirement can be overly restrictive in contexts where structural behavior is governed by purity—whether ideal-wise or element-wise—rather than arbitrary submodules. Focusing on pure submodules offers a more nuanced and robust framework: these submodules preserve tensor exactness and often capture divisibility conditions central to torsion-theoretic and non-flat settings. We thus define a module to be \emph{pure extending} if every pure submodule is essential in a direct summand. This class properly contains the class of extending modules and retains many of their decomposition-theoretic properties. Through examples (see Example \ref{PE=/> E}) and structural characterizations (see Propositions~\ref{summandpurec1}, \ref{von-Neumann regular <=> purec1 -> c1}), we show that pure extending modules provide a natural generalization suited to addressing longstanding questions in decomposition theory and morphic module classification.

\begin{definition} \label{def PE}
    A module $M$ is said to be \emph{pure extending} if every pure submodule of $M$ is essential in a direct summand of $M$.
\end{definition}

\begin{example} \label{PE=/> E}
\begin{enumerate}
    \item Let  \( M = \mathbb{Z}_2 \oplus \mathbb{Z}_8 \) as a \( \mathbb{Z} \)-module. Since $M$ is finitely generated over the noetherian ring $\mathbb{Z}$, every pure submodule is a direct summand by \cite{lam1999lectures}*{Corollary 4.91}, so $M$ is pure extending. The submodule $N=<(\bar{1},\bar{2})>$ is cyclic of order 2 and projects nontrivially onto the $\mathbb{Z}_2$-summand while its projection to the $\mathbb{Z}_8$-summand is not essential; checking the possible direct summands $A\oplus B$ of $M$ shows $N$ is not essential in any of them, so $M$ fails to be extending. 
    \item The right $R$-module $M=\begin{pmatrix}
        \mathbb{Z} & \mathbb{Z} \\
        0 & 0
    \end{pmatrix}$, where $R=\begin{pmatrix}
        \mathbb{Z} & \mathbb{Z} \\
        0 & \mathbb{Z}
    \end{pmatrix}$, is pure extending but not extending. It is not extending because the submodule $M=\begin{pmatrix}
        0 & \mathbb{Z} \\
        0 & 0
    \end{pmatrix}$ is not essential in any direct summand. It is pure extending because its only pure submodules are $0$ and $M$ itself, which are trivially essential in a direct summand. Moreover, $R$ is not right extending as shown in \cite{chatters1977rings}*{Example 6.2}.
\end{enumerate} \qed
\end{example}

These examples confirm that the class of pure extending modules strictly contains the class of extending modules. We now establish some basic closure properties of pure extending modules, beginning with their behaviour under direct summands, which mirrors the classical case.

\begin{proposition} \label{summandpurec1}
Let $M$ be a pure extending module. Then every direct summand of $M$ is pure extending.
\end{proposition}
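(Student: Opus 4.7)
The plan is to imitate the classical proof that direct summands inherit the extending property, carried over to the purity setting.

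Let $N$ be a direct summand of $M$, fix a decomposition $M = N \oplus N'$, and let $\pi : M \to N$ denote the projection along $N'$. Given a pure submodule $P$ of $N$, the goal is to exhibit a direct summand of $N$ in which $P$ is essential. The first move is to promote the purity of $P$ from $N$ to $M$: since $N \leq^\oplus M$ implies $N$ is pure in $M$, transitivity of purity yields that $P$ is pure in $M$. The pure extending hypothesis then produces $K \leq^\oplus M$ with $P \leq_e K$; fix $M = K \oplus K'$. A crucial observation is $K \cap N' = 0$: any nonzero $x$ in this intersection would, by essentiality of $P$ in $K$, yield $r \in R$ with $0 \neq xr \in P \subseteq N$, forcing $xr \in N \cap N' = 0$, a contradiction. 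Consequently, $\pi|_K$ is injective and the natural candidate for the desired summand of $N$ is $L := \pi(K) \cong K$.

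Essentiality $P \leq_e L$ is a routine check: given $0 \neq y = \pi(k) \in L$ with $k \in K$, one has $k \neq 0$ (else $y = 0$), so $P \leq_e K$ yields $r$ with $0 \neq kr \in P \subseteq N$; since $kr \in N$ we get $\pi(kr) = kr$, and $yr = \pi(k)r = \pi(kr) = kr$ is a nonzero element of $P$. The main obstacle is establishing $L \leq^\oplus N$. I expect this to follow by constructing an idempotent endomorphism of $N$ with image $L$, the candidate being $\rho = \pi \circ \pi_K|_N$, where $\pi_K : M \to K$ is the projection along $K'$. The identity $K \cap N' = 0$ is what forces $\rho^2 = \rho$ (since the defect $\pi \circ \pi_K \circ \pi_{N'} \circ \pi_K$ lands in $K \cap N' = 0$), and the purity of $P$ in $N$, combined with $P \leq_e L$, is what guarantees $\rho$ restricts to the identity on $L$. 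Carrying out this last step rigorously is the delicate heart of the argument, since it is where the pure extending structure on $M$ propagates to $N$ through the compatibility of the two decompositions $M = N \oplus N' = K \oplus K'$ along the pure submodule $P$. Once $\rho$ is verified to be an idempotent retraction $N \to L$, we conclude $L \leq^\oplus N$ with $P \leq_e L$, so $N$ is pure extending.
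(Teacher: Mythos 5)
Your setup is sound and the first two thirds of the argument are correct: purity of $P$ passes from $N$ to $M$ because the split inclusion $N\hookrightarrow M$ is pure, the pure extending hypothesis yields $K\leq^{\oplus}M$ with $P\leq_e K$, the observation $K\cap N'=0$ is proved correctly, and the verification that $P\leq_e L$ for $L=\pi(K)$ is fine. The genuine gap is exactly where you flag it, namely the claim $L\leq^{\oplus}N$, and the proposed idempotent $\rho=\pi\circ\pi_K|_N$ does not close it. Writing $\pi_{N'}=1-\pi$, one computes $\rho^2-\rho=-\,\pi\pi_K\pi_{N'}\pi_K|_N$, so idempotency requires $\pi\pi_K(N')=0$, i.e.\ $\pi_K(N')\subseteq K\cap N'=0$, i.e.\ $N'\subseteq K'$. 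This is \emph{not} a consequence of $K\cap N'=0$: that identity controls $K\cap\ker\pi$, not the image of $\ker\pi$ under $\pi_K$, and the complement $K'$ produced by the pure extending hypothesis need not contain $N'$ (nor can it in general be rechosen to do so, since that amounts to extending a map along the embedding $N'\hookrightarrow K'$). The same unproved vanishing underlies your other two claims: $\im\rho=\pi(\pi_K(N))$ equals $L$ only if $\pi_K(N)=K$, i.e.\ $M=N+K'$, and for $y=\pi(k)\in L$ one has $\rho(y)=y-\pi\pi_K\pi_{N'}(k)$, so $\rho|_L=\mathrm{id}_L$ again reduces to $\pi\pi_K(N')=0$. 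Note also that the purity of $P$, which you invoke as the ingredient that will rescue these identities, plays no visible role in any of them. So the ``delicate heart'' you defer is not a routine verification; it is the entire content of the proposition, and the route through $\rho$ already fails at the idempotency identity.

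For comparison, the paper does not attempt to build a retraction onto $\pi(K)$. It takes the candidate summand inside $N$ to be $D\cap N$ (your $K\cap N$) and argues directly from the decomposition $M=D\oplus C$ that $N=(D\cap N)\oplus(C\cap N)$, after which $P\leq_e D\cap N$ follows by the same essentiality computation you carry out for $L$. In both arguments the crux is the same — converting the summand of $M$ supplied by the hypothesis into a summand of $N$ — and that is the step your proposal leaves unproved.
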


\begin{proof}
Write $M=N\oplus N'$, and let $P\leq N$ be pure in $N$. Since the inclusion $N\hookrightarrow M$ is split (hence pure), $P$ is pure in $M$. As $M$ is pure extending there exists a direct summand $D\le M$ with $P\leq_e D$. Write $M=D\oplus C$. Then
\[
N=(D\oplus C)\cap N=(D\cap N)+(C\cap N),
\]
and $(D\cap N)\cap(C\cap N)=(D\cap C)\cap N=0$, so $N=(D\cap N)\oplus(C\cap N)$; therefore $D\cap N$ is a direct summand of $N$.

Finally, let $0\neq x\in D\cap N$. Since $P$ is essential in $D$ there is $r\in R$ with $0\neq r x\in P$. Hence every nonzero submodule of $D\cap N$ meets $P$, i.e. $P\leq_e(D\cap N)$ inside $N$. Thus $N$ is pure extending.
\end{proof}

It is well known that every direct summand is a pure submodule, but the converse does not hold in general. For a simple concrete example, let $R=\bigcup_{n\ge1} \Bbbk[[x^{1/n}]]$ be the valuation domain of Puiseux series over a field \(k\).  This is a (non-noetherian) valuation (hence Pr\"{u}fer) domain, so every ideal is flat and therefore pure as an \(R\)-submodule.  Since \(R\) is not noetherian, it admits non-principal ideals \(I\), and any such non-principal ideal is not projective and hence not a direct summand of \(R\).  Thus \(I\) is a pure submodule of \(R\) which is not a direct summand. A ring $R$ is called a \emph{pure direct summand} (PDS) ring if every pure submodule of an $R$-module is a direct summand (see \cite{fieldhouse1969pure}).

\begin{proposition} \label{puresubmod}
    If $R$ is a PDS ring, then every pure submodule of a pure extending module is pure extending.
\end{proposition}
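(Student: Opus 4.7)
The plan is to reduce the statement to the previous proposition by exploiting the defining feature of a PDS ring. Let $M$ be pure extending and let $N$ be a pure submodule of $M$; the goal is to show that $N$ is pure extending.

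First I would invoke the PDS hypothesis directly: by the definition of a PDS ring recalled just before the statement, every pure submodule of any $R$-module is a direct summand. Applying this to $N \leq M$, we obtain $M = N \oplus N'$ for some complement $N'$, so $N$ is in fact a direct summand of $M$.

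Then, since $M$ is pure extending and $N$ is a direct summand of $M$, Proposition~\ref{summandpurec1} applies and yields that $N$ is itself pure extending, which is exactly what we wanted.

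The argument is essentially a one-line reduction, so there is no real obstacle; the only thing worth emphasizing is the logical structure, namely that the PDS hypothesis collapses the a priori weaker notion of ``pure submodule'' into ``direct summand,'' after which closure under direct summands (already established) finishes the job. One could alternatively give a direct proof in the style of Proposition~\ref{summandpurec1}, starting from a pure submodule $P \leq N$, using transitivity of purity to place $P$ pure in $M$, extracting a summand $D \leq^{\oplus} M$ with $P \leq_e D$, and then intersecting with $N$; but this adds nothing since the PDS hypothesis is exactly what guarantees $N \leq^{\oplus} M$ in the first place.
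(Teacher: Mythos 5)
Your argument is correct and is essentially identical to the paper's proof: both use the PDS hypothesis to turn the pure submodule into a direct summand and then invoke Proposition~\ref{summandpurec1}. Nothing further is needed.
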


\begin{proof}
Let $M$ be a pure extending module and $P\leq M$ a pure submodule. Since $R$ is PDS, $P$ is a direct summand of $M$. Hence, $P$ inherits the pure extending property from $M$ by Proposition \ref{summandpurec1}.
\end{proof}

\begin{example} \label{submod.pure.ext} 
Fix a prime $p$. Let $A=\bigoplus_{n\ge 1}\mathbb{Z}/p^n\mathbb{Z}$. It is standard that $A$ has a pure subgroup $U$ which is not a direct summand. Hence $A$ is not pure extending. Let $M=E(A)$ be the injective hull of $A$. Then $M$ is divisible, hence injective. Every injective module is pure-injective, so every pure submodule of $M$ splits and is a direct summand; therefore, $M$ is pure extending. Thus, $A$ embeds in a pure extending module $M$, while failing to be a pure extending module itself, exhibiting non-heredity. \qed
\end{example}

This shows that, even when the module is injective (hence pure extending), submodules need not be pure extending. Since the above construction takes place over \( \mathbb{Z} \), this also shows that \( \mathbb{Z} \) is not a PDS ring, as it admits a module with a pure submodule that is not a direct summand.

We next identify several natural classes of rings and modules that automatically satisfy the pure extending property, beginning with quasi-pure-injective and divisible modules. 

\begin{definition}[{\citelist{\cite{azumaya1989rings} \cite{harmanci2015pure}}}]
Let \( N \leq M \) be a pure submodule. We say that \( M \) is a \emph{pure-essential extension} of \( N \) if, for every submodule \( M' \leq M \) with \( M' \cap N = 0 \), the image of \( N \) is not pure in \( M/M' \).

A module \( E \) is called the \emph{pure-injective envelope} of \( M \) if \( E \) is pure-injective and \( M \leq E \) is a pure-essential extension.

A module \( M \) is said to be \emph{quasi-pure-injective} if every homomorphism from a pure submodule of \( M \) into \( M \) extends to all of \( M \); equivalently, \( M \) is \( M \)-pure-injective.

A module \( M \) is called \emph{pure-split} if every pure submodule of \( M \) is a direct summand.
\end{definition}

\begin{proposition} \label{prop2.8}
A module $M$ is pure extending in each of the following cases:
\begin{enumerate}
    \item \( M \) is fully invariant in its pure-injective envelope;
    \item \( M \) is quasi-pure-injective;
    \item \( M \) is a \( \mathbb{Z} \)-module that is either finitely generated or divisible;
    \item \( M \) is pure-split.
    \item \( R \) is local and \( M = R_R \);
    \item \( R \) is a PDS ring;
    \item \( M \) is flat and cotorsion.
\end{enumerate}
\end{proposition}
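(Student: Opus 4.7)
The plan is to reduce the seven conditions to three core ideas: pure-split modules are trivially pure extending; full invariance in a pure-injective envelope forces the property via a projection argument; and local rings have very restricted pure right ideals. Cases (4) and (6) are immediate from the definition: if every pure submodule $P$ is a direct summand (as in (4), and globally in (6)), then $D=P$ satisfies $P\leq_e P$ trivially. Condition (3) reduces to (4) in both sub-cases: the cited \cite{lam1999lectures}*{Corollary 4.91} gives pure-split for finitely generated $\mathbb{Z}$-modules, while a pure submodule $P$ of a divisible $\mathbb{Z}$-module $M$ satisfies $nP=P\cap nM=P$ for all $n$, so $P$ is divisible, hence injective over the PID $\mathbb{Z}$, hence a summand.

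The core case is (1). Let $P\leq M$ be pure and let $E$ denote the pure-injective envelope of $M$; since $M$ is pure in $E$, so is $P$. Form the pure-injective envelope $E(P)\leq E$: as a pure-injective, pure submodule of the pure-injective module $E$, the embedding $E(P)\hookrightarrow E$ splits, giving $E=E(P)\oplus C$. The projection $\pi:E\to E(P)$ is an endomorphism of $E$, so by full invariance $\pi(M)\subseteq M$ and $(1-\pi)(M)\subseteq M$, yielding $M=(M\cap E(P))\oplus(M\cap C)$ with $M\cap E(P)$ a direct summand of $M$. Since pure-injective envelopes are classical essential extensions, $P\leq_e E(P)$, and this essentiality restricts to $P\leq_e M\cap E(P)$, establishing the pure extending property. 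Cases (2) and (7) reduce to (1): a quasi-pure-injective module is fully invariant in its pure-injective envelope (the pure analog of Johnson--Wong, cf.\ \cite{harmanci2015pure}), and a flat cotorsion module is pure-injective by a classical result in flat cover theory, hence trivially fully invariant in its own pure-injective envelope.

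Case (5) is handled by a separate short argument. If $I\leq R_R$ is a pure right ideal of the local ring $R$, then the purity criterion $I\cap aR=aI$ (valid for all $a\in R$) applied with $a\in I$ shows that each $a\in I$ satisfies $a=ab$ for some $b\in I$. If $I\neq R$, then $I\subseteq J(R)$, so $1-b$ is a unit, and $a(1-b)=0$ forces $a=0$; thus $I=0$. Hence the only pure submodules of $R_R$ are $0$ and $R$, both trivially essential in direct summands.

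The main obstacle lies in case (1): the essentiality descent relies on two structural facts, namely that full invariance yields the split decomposition $M=(M\cap E(P))\oplus(M\cap C)$ and that pure-injective envelopes are classical essential extensions. Both are standard in the pure-injectivity literature \citelist{\cite{azumaya1989rings} \cite{harmanci2015pure}} but require careful invocation; the remaining cases either reduce cleanly to (1) or follow from the pure-split tautology of (4).
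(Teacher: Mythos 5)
Your handling of (3), (4), (5) and (6) is correct and essentially the paper's: (4) and (6) are tautological, (3) splits into the pure-split case via Lam's Corollary 4.91 and the divisible case (your observation that a pure submodule of a divisible group is divisible, hence a summand, is a slight variant of the paper's ``divisible $\Rightarrow$ injective $\Rightarrow$ extending'' route), and your computation for (5) re-derives the Fieldhouse theorem the paper cites, up to a left/right slip: purity of a right ideal $I$ in $R_R$ gives $I\cap Ra=Ia$, hence $a=ba$ with $b\in I\subseteq J(R)$, not $a=ab$ — the conclusion $I=0$ is unaffected. Your route for (1), (2), (7) through splitting off a pure-injective envelope of $P$ inside $PE(M)$ and intersecting with $M$ is also the paper's route (the paper does this explicitly for (2), cites \cite{harmanci2015pure}*{Lemma 3.1} for (1), and reduces (7) to (2) exactly as you reduce it to (1)).

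The genuine gap is the assertion that ``pure-injective envelopes are classical essential extensions, so $P\leq_e E(P)$.'' This is false: a pure-injective envelope is only a \emph{pure-essential} extension, which does not imply essentiality. For instance, $\mathbb{Z}_{(p)}$ is a pure subgroup of the $p$-adic integers $J_p$ (its pure-injective envelope), yet any $x\in J_p\setminus\mathbb{Q}$ generates a cyclic subgroup meeting $\mathbb{Z}_{(p)}$ trivially. Moreover, the gap cannot be patched, because the statement itself fails at exactly this point under the paper's definition of essentiality: $J_p$ is flat and cotorsion, hence pure-injective, hence equal to (and trivially fully invariant in) its own pure-injective envelope; it is indecomposable as an abelian group since $\morp_{\mathbb{Z}}(J_p)\cong J_p$ is a local ring with no nontrivial idempotents; and the pure subgroup $\mathbb{Z}_{(p)}$ is essential in neither of the only two summands $0$ and $J_p$. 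So $J_p$ witnesses the failure of (1), (2) and (7) with ``essential'' read classically; the conclusion $P\leq_e M\cap E(P)$ would only follow if essentiality were replaced by pure-essentiality throughout. (The paper's own proof of (2) asserts ``$P$ is essential in $M\cap PE(P)$'' with no justification and suffers from the same defect.) A secondary, lesser issue: your reduction of (2) to (1) invokes a ``pure analogue of Johnson--Wong'' (quasi-pure-injective $\Rightarrow$ fully invariant in $PE(M)$) that you do not prove and whose classical proof depends on the very essentiality property that fails here.
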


\begin{proof}
\begin{enumerate}
\item This is an immediate consequence of \cite{harmanci2015pure}*{Lemma 3.1}.

\item Let \( P \leq M \) be a pure submodule. Denote by \( PE(P) \) and \( PE(M) \) the pure-injective envelopes of \( P \) and \( M \), respectively. Since \( P \leq M \) is pure, \( PE(P) \) is a direct summand of \( PE(M) \) (see \cite{azumaya1989rings}). Because \( M \) is quasi-pure-injective, it is itself a direct summand of \( PE(M) \). Hence \( P \) is essential in \( M \cap PE(P) \), and \( M \cap PE(P) \) is a direct summand of \( M \). Therefore \( M \) is pure extending.

\item If \( M \) is finitely generated over the Noetherian ring \( \mathbb{Z} \), then every pure submodule of \( M \) is a direct summand by \cite{lam1999lectures}*{Corollary 4.91}. If \( M \) is divisible, then by Baer’s criterion it is injective as a \( \mathbb{Z} \)-module, hence extending, and consequently pure extending.

\item If every pure submodule of \( M \) is a direct summand, then each is trivially essential in itself, so \( M \) is pure extending.
\item By \cite{fieldhouse1970pure}*{Theorem 3}, when \( R \) is local, the only pure submodules of \( R_R \) are \( 0 \) and \( R \) itself; hence \( R_R \) is pure extending.
    \item Immediate, since over a PDS ring every pure submodule is a direct summand.
    \item If \( M \) is flat and cotorsion, then it is pure-injective. Therefore \( M \) is pure extending by (2).
\end{enumerate}
\end{proof}

The preceding conditions offer flexible criteria that guarantee the pure extending property. We now investigate the converse: when does a pure extending module necessarily satisfy the classical extending condition? This leads to a ring-theoretic characterization, revealing that the distinction between these two classes collapses precisely over von Neumann regular rings.

\begin{proposition} \label{von-Neumann regular <=> purec1 -> c1}
Let \( R \) be a von Neumann regular ring. Then a right \( R \)-module \( M \) is pure extending if and only if it is extending.
\end{proposition}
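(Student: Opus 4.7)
The plan is to reduce the statement to the classical characterization of von Neumann regular rings in purity-theoretic terms: a ring $R$ is von Neumann regular if and only if every submodule of every right $R$-module is pure, equivalently, if and only if every short exact sequence of right $R$-modules is pure-exact. This is a standard result (see, e.g., Lam's \emph{Lectures on Modules and Rings}, or Fieldhouse's original characterization).

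Granted this characterization, the proof collapses to two short observations. For the implication extending $\Rightarrow$ pure extending, no hypothesis on $R$ is needed: pure submodules are a subclass of submodules, so if every submodule is essential in a direct summand, then in particular every pure submodule is. For the reverse, suppose $R$ is von Neumann regular and $M$ is pure extending. Let $N \leq M$ be an arbitrary submodule. The purity characterization of von Neumann regular rings guarantees that $N$ is pure in $M$. Since $M$ is pure extending, $N$ is essential in a direct summand of $M$. As $N$ was arbitrary, $M$ satisfies the $C_1$ condition, i.e. $M$ is extending.

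The only substantive ingredient is the purity characterization of von Neumann regular rings, which is classical and well-documented; I do not expect any genuine obstacle. In the write-up, I would cite the appropriate result (either from Lam or Fieldhouse) and then present the two implications as a brief two-line argument, emphasizing that the non-trivial direction is exactly the one that uses von Neumann regularity, while the trivial direction records the general inclusion of the extending class inside the pure extending class already noted in the introduction.
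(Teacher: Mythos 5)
Your proposal is correct and follows essentially the same route as the paper: the forward direction is the trivial inclusion of pure submodules among all submodules, and the substantive direction invokes the standard fact that over a von Neumann regular ring every submodule of every right module is pure (the paper phrases this via flatness of all modules, which is an equivalent formulation of the same classical characterization). No gaps.
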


\begin{proof}
($\Rightarrow$). Suppose \( M \) is pure extending. Let \( N \leq M \) be any submodule. Since \( R \) is von Neumann regular, every right \( R \)-module is flat, so \( N \) is a pure submodule of \( M \). By the pure extending property, \( N \) is essential in a direct summand of \( M \), which is precisely the definition of \( M \) being extending.

($\Leftarrow$). Conversely, if \( M \) is extending, then for every pure submodule \( P \leq M \), the extending property ensures that \( P \) is essential in some direct summand of \( M \); hence \( M \) is pure extending.

Therefore, over a von Neumann regular ring, the classes of pure extending and extending modules coincide.
\end{proof}

The pure extending property is preserved under equivalences of module categories, as one would expect from its definability in terms of purity, essentiality, and summand structure. This invariance is formalized in the following result.

\begin{proposition}
Let \( R \) and \( S \) be Morita equivalent rings, and let 
\( \mathcal{F} \colon \mathrm{Mod}\text{-}R \to \mathrm{Mod}\text{-}S \)
be an equivalence of categories. Then an \( R \)-module \( M \) is pure extending if and only if \( \mathcal{F} (M) \) is pure extending as an \( S \)-module.
\end{proposition}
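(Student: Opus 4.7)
The plan is to exploit the fact that a Morita equivalence preserves every categorical ingredient appearing in the definition of ``pure extending'': the submodule lattice, direct summands, essential submodules, and (most critically) purity of submodules. Once these preservations are in hand, the biconditional follows by transporting back and forth along $\mathcal{F}$ and a chosen quasi-inverse $\mathcal{G}$.

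First I would record the standard generalities. The functor $\mathcal{F}$ is additive and exact, and it induces an order-isomorphism between the submodule lattice of any $M$ and that of $\mathcal{F}(M)$, respecting inclusions, arbitrary sums, and intersections. From this it is immediate that direct-sum decompositions are preserved, so $N \leq^{\oplus} M$ if and only if $\mathcal{F}(N) \leq^{\oplus} \mathcal{F}(M)$. Essentiality is likewise immediate: $N \leq_{e} M$ is the lattice-theoretic condition that every nonzero submodule of $M$ meets $N$, and this transfers verbatim along the induced lattice isomorphism.

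The main substantive step is to verify that $\mathcal{F}$ preserves pure submodules. Here I would invoke the categorical characterization: a short exact sequence $0 \to N \to M \to M/N \to 0$ is pure exact if and only if every morphism from a finitely presented module into $M/N$ lifts to $M$. Since $\mathcal{F}$, being an equivalence, is both a left and a right adjoint, it preserves all limits and colimits, and in particular preserves filtered colimits; therefore it preserves compact (equivalently, finitely presented) objects. Combined with exactness, this shows that $N \leq M$ is pure if and only if $\mathcal{F}(N) \leq \mathcal{F}(M)$ is pure. (Equivalently, one may use the tensor-product characterization together with the fact that Morita equivalences transport the tensor product via the progenerator bimodule.)

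With all four preservations established, the proof is purely formal. Suppose $M$ is pure extending and let $P' \leq \mathcal{F}(M)$ be pure. Set $P := \mathcal{G}(P') \leq \mathcal{G}\mathcal{F}(M) \cong M$; by purity-preservation $P$ is pure in $M$, so there is $D \leq^{\oplus} M$ with $P \leq_{e} D$. Applying $\mathcal{F}$ and using the preservations yields $P' \leq_{e} \mathcal{F}(D) \leq^{\oplus} \mathcal{F}(M)$, as required. The converse is symmetric, using $\mathcal{G}$ in place of $\mathcal{F}$. The only nonroutine point is the preservation of purity; the lattice, summand, and essentiality assertions are elementary consequences of $\mathcal{F}$ being an additive equivalence.
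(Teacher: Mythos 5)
Your proposal is correct and follows essentially the same route as the paper, which simply cites the standard facts that Morita equivalences preserve exactness, direct summands, essential submodules, and purity, and then transports the condition along the equivalence. The only difference is that you supply the justification for purity-preservation (via preservation of finitely presented objects and the lifting criterion for pure exactness) that the paper leaves to a reference.
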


\begin{proof}
Morita equivalences preserve exact sequences, direct summands, and essential submodules (see \cite{lam1999lectures}*{\S 18}). In particular, they preserve purity and pure submodules. Hence, if \( M \) is pure extending, so is \( \mathcal{F} (M) \), and conversely.
\end{proof}

We now examine the behavior of the pure extending property under direct sums. Since purity interacts well with summands, it is natural to ask when a direct sum of pure extending modules remains pure extending. Extending the following result inductively yields a finite closure property for the class of pure extending modules.

\begin{theorem} \label{direct sum}
Let \( M = M_1 \oplus M_2 \). Then \( M \) is pure extending if and only if both \( M_1 \) and \( M_2 \) are pure extending.
\end{theorem}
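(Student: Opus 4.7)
The $(\Rightarrow)$ direction is immediate from Proposition~\ref{summandpurec1}: since $M_1$ and $M_2$ are direct summands of $M$, each inherits the pure-extending property.

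For the $(\Leftarrow)$ direction, assume both $M_1$ and $M_2$ are pure extending, and let $P \leq M$ be pure. My plan is to produce a direct summand $D$ of $M$ of the form $D_1 \oplus D_2$ with $D_i \leq^\oplus M_i$, in which $P$ is essential. The natural strategy is to select pure submodules $\widetilde{P}_i \leq M_i$ encoding the "component" of $P$ inside $M_i$, apply the pure-extending hypothesis to each $M_i$ to obtain $D_i \leq^\oplus M_i$ with $\widetilde{P}_i \leq_e D_i$, and then verify $P \leq_e D_1 \oplus D_2$.

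The central technical difficulty is identifying the right $\widetilde{P}_i$. The obvious candidates $P \cap M_i$ and $\pi_i(P)$ are not in general pure in $M_i$: for instance, $P = \mathbb{Z}(1,2)$ is pure in $\mathbb{Z}/4 \oplus \mathbb{Z}/4$, yet $P \cap (\mathbb{Z}/4 \oplus 0) \cong 2\mathbb{Z}/4$ is not pure in $\mathbb{Z}/4$, showing that the naive lifting through the projection $\pi_i$ of a finitely presented test map into $M_i/(P \cap M_i)$ fails to respect purity. To bypass this, I would pass to pure-injective envelopes, exploiting the decomposition $PE(M) = PE(M_1) \oplus PE(M_2)$ together with the splitting $PE(M) = PE(P) \oplus E'$ induced by purity of $P$ in $M$ (see \cite{azumaya1989rings}). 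The projections of $PE(P)$ along the $PE(M_i)$ give pure-injective pieces whose intersection with $M_i$ I would take as $\widetilde{P}_i$, ensuring both that $\widetilde{P}_i$ is pure in $M_i$ and that $P$ embeds pure-essentially into $\widetilde{P}_1 \oplus \widetilde{P}_2$.

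Once the $\widetilde{P}_i$ and corresponding summands $D_i \leq^\oplus M_i$ are in hand, the essentiality check $P \leq_e D_1 \oplus D_2$ is handled elementwise: for any nonzero $d_1 + d_2 \in D_1 \oplus D_2$, combine the essentialities $\widetilde{P}_i \leq_e D_i$ with the pure-essential embedding of $P$ into $\widetilde{P}_1 \oplus \widetilde{P}_2$ to produce $r \in R$ with $0 \neq (d_1+d_2)r \in P$. The hardest step is the pure-injective envelope argument in the preceding paragraph, namely controlling the projections of $PE(P)$ along $PE(M_1) \oplus PE(M_2)$ so that they descend to genuine pure submodules of the $M_i$ carrying enough of $P$ to ensure essentiality in $D$; the remaining lattice manipulations in the summand lattice of $M$ then follow the template of classical direct-sum arguments, with purity of $P$ playing the role usually filled by relative injectivity of $M_i$ over $M_j$.
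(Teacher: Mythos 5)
Your diagnosis of the central difficulty is sharp, and it in fact applies to the paper's own proof, which takes precisely the route you reject: it asserts that $\pi_1(P)$ is pure in $M_1$ and builds the summands $D_i$ around the projections $\pi_i(P)$. That assertion is false in general: take $P=\langle(\bar 2,\bar 1)\rangle$ inside $\mathbb{Z}/4\mathbb{Z}\oplus\mathbb{Z}/2\mathbb{Z}$; then $P$ is a direct summand (a complement is $\mathbb{Z}/4\mathbb{Z}\oplus 0$), hence pure, yet $\pi_1(P)=2\mathbb{Z}/4\mathbb{Z}$ is not pure in $\mathbb{Z}/4\mathbb{Z}$. So your instinct not to trust $\pi_i(P)$ or $P\cap M_i$ is correct, and your $\mathbb{Z}/4\oplus\mathbb{Z}/4$ example makes the point for $P\cap M_i$.

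However, what you offer in place of the naive argument is a plan rather than a proof, and the load-bearing steps are exactly the ones left unverified. First, the ``projection of $PE(P)$ along $PE(M_2)$'' is just the image of $PE(P)$ under $\pi_1\colon PE(M)\to PE(M_1)$; a homomorphic image of a pure-injective module need not be pure-injective or pure in the target, so there is no justification that $\widetilde P_1:=\pi_1(PE(P))\cap M_1$ is pure in $M_1$ --- the obstruction you identified for $\pi_1(P)$ simply reappears one level up, and intersecting two pure submodules of $PE(M_1)$ need not produce a pure submodule either. Second, even granting pure $\widetilde P_i$ with $\widetilde P_i\leq_e D_i\leq^{\oplus}M_i$, the definition of pure extending requires \emph{ordinary} essentiality of $P$ in a summand, so you would need $P\leq_e \widetilde P_1\oplus\widetilde P_2$ in the ordinary sense; a pure-essential embedding is weaker and does not deliver this. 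This essentiality can genuinely fail for diagonal pure submodules: in the example above, $P$ is not essential in $\pi_1(P)\oplus\pi_2(P)$ because $0\oplus\mathbb{Z}/2\mathbb{Z}$ meets $P$ trivially. You would also need $\pi_i(P)\subseteq D_i$ merely to ensure $P\subseteq D_1\oplus D_2$, which your $\widetilde P_i$ are not guaranteed to arrange. As it stands, the proposal correctly identifies the obstacle (one the published proof overlooks) but does not overcome it.
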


\begin{proof}
($\Rightarrow$). Assume \( M \) is pure extending. Since \( M_1 \) and \( M_2 \) are direct summands of \( M \), Proposition~\ref{summandpurec1} implies that each \( M_i \) is pure extending.

($\Leftarrow$). Suppose that \( M_1 \) and \( M_2 \) are pure extending. Let \( P \leq M \) be a pure submodule, and let \(\pi_i : M \to M_i\) be the canonical projections for \(i = 1, 2\). Then \(\pi_1(P)\) is pure in \(M_1\), so there exists a direct summand \(D_1 \le M_1\) such that \(\pi_1(P) \leq_e D_1\). Similarly, there exists a direct summand \(D_2 \le M_2\) with \(\pi_2(P) \leq_e D_2\). Set \( D = D_1 \oplus D_2 \), which is a direct summand of \( M \).

For any \( (p_1, p_2) \in P \), we have \( p_i \in \pi_i(P) \leq D_i \), 
hence \( P \leq D \). Since \( D \) is a direct summand of \( M \) and purity is preserved under intersections with summands, \( P \) is pure in \( D \).

We now verify essentiality. Let \( 0 \neq x = (d_1, d_2) \in D \). If \( d_1 \neq 0 \), then, since \(\pi_1(P) \leq_e D_1\), there exists \( r \in R \) such that \( 0 \neq d_1 r \in \pi_1(P) \). Choose \( p = (d_1 r, p_2) \in P \) for some \( p_2 \in \pi_2(P) \). Consider \( x r - p = (0, d_2 r - p_2) \in D \). If \( x r - p = 0 \), then \( x r = p \in P \), so \( P \cap xR \neq 0 \). If \( x r - p \neq 0 \), then \( d_2 r - p_2 \neq 0 \); since \(\pi_2(P) \leq_e D_2\), there exists \( s \in R \) such that \( 0 \neq (d_2 r - p_2)s \in \pi_2(P) \). Then $(xr - p)s = (0, (d_2 r - p_2)s) \in 0 \oplus \pi_2(P) \leq P$, so \( x(rs) \in P \) and \( x(rs) \neq 0 \). Thus \( P \cap xR \neq 0 \). The case \( d_2 \neq 0 \) is symmetric. Hence \( P \leq_e D \), and \( M \) is pure extending.
\end{proof}

The above theorem does not extend to infinite direct sums.

\begin{example}
Let \( R = \mathbb{Z} \) and \( M_i = \mathbb{Z} \) for each \( i \in \mathbb{N} \). Set \( M = \bigoplus_{i=1}^{\infty} M_i = \bigoplus_{i=1}^{\infty} \mathbb{Z} \), and consider the submodule
\[
P = \left\{ (n_i) \in M \,\middle|\, \textstyle\sum_{i=1}^{\infty} n_i = 0 \right\},
\]
where the sum is finite since each element of \( M \) has finite support. Each \( M_i = \mathbb{Z} \) is pure extending, as \( \mathbb{Z} \) is a uniform domain. However, \( M \) itself is not pure extending, because \( P \) is a pure submodule of \( M \) that is not essential in any direct summand of \( M \). \qed
\end{example}

As a corollary, we recover a known result for classical extending modules over von Neumann regular rings.

\begin{corollary} \label{vnr, closure extending}
Let \( R \) be von Neumann regular. Then the class of extending right \( R \)-modules is closed under finite direct sums.
\end{corollary}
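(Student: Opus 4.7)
The plan is to chain together the two structural results just established, using the fact that over a von Neumann regular ring the classes of extending and pure extending modules are identical. Given extending right $R$-modules $M_1, M_2$, I would first invoke Proposition~\ref{von-Neumann regular <=> purec1 -> c1} to conclude that each $M_i$ is pure extending. Then by Theorem~\ref{direct sum}, the direct sum $M_1 \oplus M_2$ is pure extending. Applying Proposition~\ref{von-Neumann regular <=> purec1 -> c1} once more in the reverse direction yields that $M_1 \oplus M_2$ is extending.

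To extend to an arbitrary finite direct sum $M = M_1 \oplus M_2 \oplus \cdots \oplus M_n$, I would proceed by induction on $n$. The base case $n = 1$ is trivial, and $n = 2$ is handled as above. For the inductive step, write $M = (M_1 \oplus \cdots \oplus M_{n-1}) \oplus M_n$ and apply the inductive hypothesis together with the $n = 2$ case.

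There is no real obstacle here: the entire argument is a short ``transport'' across the equivalence of classes given by Proposition~\ref{von-Neumann regular <=> purec1 -> c1}. The only conceptual point worth noting is that von Neumann regularity enters twice—first to upgrade the extending hypothesis on each $M_i$ to pure extending, and then to downgrade the pure extending conclusion on the sum back to extending. The closure under finite direct sums of the pure extending class (Theorem~\ref{direct sum}), which holds without any regularity assumption, provides the nontrivial middle step.
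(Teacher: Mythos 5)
Your proof is correct and follows the same route as the paper: use Proposition~\ref{von-Neumann regular <=> purec1 -> c1} to identify extending with pure extending over a von Neumann regular ring, apply Theorem~\ref{direct sum} (extended by induction to $n$ summands), and transport back. The only difference is that you spell out the induction explicitly, which the paper leaves implicit.
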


\begin{proof}
Over such rings, every submodule is pure, so pure extending and extending modules coincide by Proposition \ref{von-Neumann regular <=> purec1 -> c1}. 
The result then follows from Theorem \ref{direct sum}.
\end{proof}

The following examples illustrate both the scope and the limitation of this closure property.

\begin{example} \label{extending-not-closed}
\begin{enumerate}
  \item \emph{A von Neumann regular instance.}  Let \( R = \Bbbk \) be a field. Since \( \Bbbk \) is semisimple artinian, every \( \Bbbk \)-module is injective. In particular, for any finite index set \( I \), the direct sum 
  \( M = \bigoplus_{i \in I} \Bbbk \cong \Bbbk^{|I|} \) is injective and hence extending. This confirms the corollary in the classical von Neumann regular setting.

  \item \emph{A non-von Neumann regular instance.}  Let \( R = \Bbbk[x]/(x^2) \), where \( \Bbbk \) be a field. This ring is a finite-dimensional local Frobenius algebra, so \( R \) is self-injective. Consequently, \( R \oplus R \) is injective and hence extending. However, since \( x \neq 0 \) is nilpotent, \( R \) is not reduced and therefore not von Neumann regular. Thus, while von Neumann regularity guarantees closure, it is not a necessary condition.
\end{enumerate} \qed
\end{example}

We now provide several structural characterizations of pure extending modules, particularly those that reveal deeper connections with the nature of the base ring. In the first result, we characterize von Neumann regularity in terms of the flatness of pure extending modules.

\begin{proposition} \label{fe1}
Let \( R \) be a ring. The following statements are equivalent:
\begin{enumerate}
    \item \( R \) is von Neumann regular;
    \item Every pure extending right \( R \)-module is flat.
\end{enumerate}
\end{proposition}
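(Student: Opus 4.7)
The plan for $(1)\Rightarrow(2)$ is immediate: von Neumann regularity of $R$ is equivalent to the flatness of every right $R$-module, so in particular every pure extending right $R$-module is flat. No use of the pure extending hypothesis is needed.

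The substance lies in $(2)\Rightarrow(1)$, where I would verify the same classical characterization under hypothesis~(2), namely that \emph{every} right $R$-module is flat. The device that couples the two conditions is the pure-injective envelope. Given an arbitrary right $R$-module $N$, let $E := PE(N)$ denote its pure-injective envelope (whose existence is standard). Since $E$ is pure-injective, it is in particular quasi-pure-injective, and hence pure extending by Proposition~\ref{prop2.8}(2). Hypothesis~(2) then forces $E$ to be flat.

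To transfer flatness from $E$ back to $N$, I would invoke the well-known principle that a pure submodule of a flat module is flat. By construction of the pure-injective envelope, the embedding $N\hookrightarrow E$ is pure, so $N$ sits as a pure submodule of the flat module $E$. Concretely, the pure short exact sequence $0\to N\to E\to E/N\to 0$ combined with the flatness of $E$ yields, via the Tor long exact sequence, first $\mathrm{Tor}_1^R(E/N,-)=0$ (from injectivity of $N\otimes_R - \to E\otimes_R -$ together with $\mathrm{Tor}_1^R(E,-)=0$) and then $\mathrm{Tor}_1^R(N,-)=0$. Hence $N$ is flat. Since $N$ was arbitrary, every right $R$-module is flat, and therefore $R$ is von Neumann regular.

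The only conceptual obstacle is the recognition that $PE(N)$ qualifies as a pure extending module; this is supplied by Proposition~\ref{prop2.8}(2), together with the elementary observation that pure-injectivity entails quasi-pure-injectivity (applying pure-injectivity to pure embeddings of the form $P\hookrightarrow M$ with $P\le M$). Beyond that, the argument is a clean combination of the pure-essential embedding $N\hookrightarrow PE(N)$ with the homological transfer principle \emph{pure-in-flat implies flat} and the standard flatness characterization of von Neumann regular rings.
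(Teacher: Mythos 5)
Your proof is correct and follows essentially the same route as the paper: both arguments take the pure-injective envelope $PE(N)$, observe it is pure extending (via pure-injectivity), apply hypothesis (2) to get flatness, and then descend flatness to $N$ along the pure embedding. The only cosmetic difference is that the paper cites a textbook result for the flatness transfer while you carry out the Tor computation directly.
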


\begin{proof}
(1) \(\Rightarrow\) (2). Over a von Neumann regular ring, every module is flat \cite{lam1999lectures}*{Theorem 4.21}, so the claim follows immediately.

(2) \(\Rightarrow\) (1). Let \( M \) be a right \( R \)-module and \( PE(M) \) its pure-injective hull. The canonical sequence
\[
0 \longrightarrow M \longrightarrow PE(M) \longrightarrow PE(M)/M \longrightarrow 0
\]
is pure exact. By hypothesis, \( PE(M) \) is pure extending and hence flat. 
Then by \cite{lam1999lectures}*{Theorem 4.86}, the quotient \( PE(M)/M \) is flat, so \( M \) itself is flat. Since every right module is flat, \( R \) is von Neumann regular.
\end{proof}

We next characterize semisimple rings in terms of several module-theoretic conditions involving purity, injectivity, and decomposition.

\begin{theorem}  \label{fe2}
Let \( R \) be a ring. The following conditions are equivalent:
\begin{enumerate}
    \item \( R \) is semisimple;
    \item Every pure \( C_3 \) \( R \)-module is projective;
    \item Every pure \( C_2 \) \( R \)-module is projective;
    \item Every quasi-pure-injective \( R \)-module is projective;
    \item Every pure-injective \( R \)-module is projective;
    \item Every pure extending \( R \)-module is projective.
\end{enumerate}
\end{theorem}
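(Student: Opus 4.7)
The plan is to prove the equivalence by treating $(5)$ as a hub: each of $(2)$--$(4)$ and $(6)$ reduces to $(5)$ through the inclusion hierarchy among the listed module classes, and the core work is the implication $(5)\Rightarrow(1)$. The direction $(1)\Rightarrow(k)$ for each $k\in\{2,\dots,6\}$ is immediate, since over a semisimple ring every right module is projective and in particular belongs to every class on the list.

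For the hierarchy reductions, I would rely on the chain
\[
\text{pure-injective} \;\Rightarrow\; \text{quasi-pure-injective} \;\Rightarrow\; \text{pure }C_2 \;\Rightarrow\; \text{pure }C_3, \qquad \text{pure-injective} \;\Rightarrow\; \text{pure extending}.
\]
Pure-injectivity is a self-case of quasi-pure-injectivity. Quasi-pure-injective $\Rightarrow$ pure $C_2$ follows by the classical argument: given a pure submodule $A\leq M$ and an isomorphism $\psi\colon A\to B$ onto a summand $B\leq^{\oplus} M$, one extends $i_B\circ\psi\colon A\to M$ to some $\theta\in \morp_R(M)$ using quasi-pure-injectivity; then $g:=\psi^{-1}\circ\pi_B\circ\theta\colon M\to A$ restricts to $\mathrm{id}_A$, so $A\leq^{\oplus} M$. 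Pure $C_2\Rightarrow$ pure $C_3$ mirrors the classical implication $C_2\Rightarrow C_3$. The last arrow is Proposition~\ref{prop2.8}(2) (or directly: in a pure-injective module every pure submodule splits, hence is essential in itself as a summand). Together these deliver $(2)\Rightarrow(3)\Rightarrow(4)\Rightarrow(5)$ and $(6)\Rightarrow(5)$.

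For $(5)\Rightarrow(1)$, I would argue in two complementary steps. First, every injective $R$-module is trivially pure-injective, so $(5)$ forces every injective right $R$-module to be projective; this is the standard characterization of quasi-Frobenius rings, which yields in particular that $R$ is right artinian. Second, for an arbitrary right $R$-module $M$ consider the pure-exact sequence $0\to M\to PE(M)\to PE(M)/M\to 0$ afforded by the pure-injective hull. By $(5)$, $PE(M)$ is projective and hence flat; since $M$ is pure in the flat module $PE(M)$, $M$ is flat as well (by the same argument used in the proof of Proposition~\ref{fe1}, via \cite{lam1999lectures}*{Theorem 4.86}). Thus every right $R$-module is flat, so $R$ is von Neumann regular by \cite{lam1999lectures}*{Theorem 4.21}. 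Being simultaneously right artinian and von Neumann regular, $R$ satisfies $J(R)=0$ and the descending chain condition on right ideals, hence is semisimple.

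The main obstacle lies in the step $(5)\Rightarrow(1)$: the hierarchy implications are routine analogues of classical facts, but closing the cycle requires extracting two independent consequences from the single hypothesis---the right artinianness from injectives being projective, and the von Neumann regularity from pure-injective hulls being flat---and then fusing them through the observation that a right artinian von Neumann regular ring is necessarily semisimple.
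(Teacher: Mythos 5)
Your proposal is correct, and the reduction part coincides with the paper's: both treat the class inclusions $\text{pure-injective}\subseteq\text{quasi-pure-injective}\subseteq\text{pure }C_2\subseteq\text{pure }C_3$ and $\text{pure-injective}\subseteq\text{pure extending}$ as the source of $(2)\Rightarrow(3)\Rightarrow(4)\Rightarrow(5)$ and of the reduction of $(6)$ (the paper sends $(6)$ straight to $(1)$, you send it to $(5)$ first; this is immaterial). The genuine divergence is in $(5)\Rightarrow(1)$. The paper argues in one line: injectives are pure-injective, hence projective by $(5)$, ``hence $R$ is semisimple by \cite{wisbauer1991foundations}*{Proposition 20.7}.'' But ``every injective right module is projective'' by itself characterizes quasi-Frobenius rings (e.g.\ $\Bbbk[x]/(x^2)$ satisfies it without being semisimple), so the paper's citation carries the whole weight of an implication that is not literally what that proposition states. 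Your two-pronged argument supplies exactly the missing ingredient: from injectives-projective you extract only that $R$ is QF (hence right artinian), and independently you run the pure-injective-hull argument of Proposition~\ref{fe1} --- $PE(M)$ projective, hence flat, and $M$ pure in a flat module is flat --- to conclude that $R$ is von Neumann regular; an artinian von Neumann regular ring is semisimple. This is a more elaborate but fully self-contained route, and it has the added benefit of reusing machinery already established in the paper ($(2)\Rightarrow(1)$ of Proposition~\ref{fe1}) rather than leaning on an external citation. Your explicit verification that quasi-pure-injective implies pure $C_2$ (via the standard extension-and-retraction argument) likewise replaces the paper's appeal to \cite{maurya2022pure}; both are fine.
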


\begin{proof}
(1) \(\Rightarrow\) (2)--(6). If \( R \) is semisimple, all modules are projective and injective; hence, the listed classes are trivially projective.

(2) \(\Rightarrow\) (3). Immediate, since the class of \( C_2 \)-modules is contained in that of \( C_3 \)-modules.

(3) \(\Rightarrow\) (4). By \cite{maurya2022pure}*{Proposition 6}, every quasi-pure-injective module is pure and satisfies the \( C_2 \) condition.

(4) \(\Rightarrow\) (5). Every pure-injective module is, in particular, quasi-pure-injective.

(5) \(\Rightarrow\) (1). If all pure-injective modules are projective, then every injective module is projective; hence \( R \) is semisimple by \cite{wisbauer1991foundations}*{Proposition 20.7}.

(6) \(\Rightarrow\) (1). Since every injective module is pure-injective and every pure-injective module is pure extending, injective modules are pure extending. By (6), they are therefore projective, forcing \( R \) to be semisimple.

(1) \(\Rightarrow\) (6). Over semisimple rings, projective, injective, and extending modules coincide, and every submodule is pure.
\end{proof}

\subsection{RD-pure Extending Modules}

The notion of RD-pure extending modules isolates a weaker, element-wise form of purity. Classical (ideal-wise) purity requires $IP=IM\cap P$ for all ideals $I\subseteq R$, whereas RD-purity asks only for $rP=rM\cap P$ for all $r \in R$ \cite{wisbauer1991foundations}. Over principal ideal domains (in particular, over $\mathbb{Z}$), these notions coincide, since every ideal is principal. The distinction becomes meaningful over rings with zero divisors, where RD-purity can be strictly weaker:
\begin{center}
    Pure $\subsetneq$ RD-pure
\end{center}

For example, with $R=\Bbbk[x,y]/(x,y)^2$, $M=R\oplus R$. Consider the submodule $N=\{(\bar{x}a,\bar{y}a) \ : \ a\in R \}$ of $M$. Because $R=\Bbbk\oplus \Bbbk\bar{x}\oplus \Bbbk \bar{y}$ with $\bar{x}^2=\bar{x}.\bar{y}=\bar{y}^2=0$, it suffices to check $rN=rM\cap N$ for $r=1,\bar{x},\bar{y}$. For $r=1$ the equality is trivial; for $r=\bar{x}$ (resp. $\bar{y}$) we have $\bar{x}N=\{0\}=\bar{x}M\cap N$ (resp. $\bar{y}N=\{0\}=\bar{y}M\cap N$) because products of two radical elements are zero and $\bar{x},\bar{y}$ are $\Bbbk$-linearly independent. Hence, $N$ is RD-pure in $M$ but not pure: Consider the ideal \( I = (\bar{x},\bar{y}) \) and the element \( (\bar{x},0) \in M \). Then $(\bar{x},0) \in IM \cap N$ since $(\bar{x},0) = \bar{x}\cdot(1,0) + \bar{y}\cdot(0,0) \in IM$ and also $(\bar{x},0) = (\bar{x}\cdot 1,\ \bar{y}\cdot 0) \in N$. However, $IN = \left\{ \big(\bar{x}^2 a + \bar{x}\bar{y}b,\ \bar{x}\bar{y}a + \bar{y}^2 b\big) : a,b \in R \right\} = 0$ because in \(R\) we have the relations $\bar{x}^2 = \bar{x}\bar{y} = \bar{y}^2 = 0$. Thus, $(\bar{x},0) \notin IN$, which shows that $IM \cap N \neq IN$ and therefore \(N\) is not pure in \(M\). This shows that RD-purity can capture element-wise control in contexts where ideal-wise control is too restrictive.

A classical result of Fieldhouse \cite{fieldhouse1969pure} shows that purity and RD-purity coincide precisely for flat modules. This equivalence plays a key role in our results: in Proposition \ref{free(proj.) RD-PE <=> PE}, we show that projective modules are RD-pure extending if and only if they are pure extending. This is further generalized in Corollary \ref{RD-pure => pure} to all flat modules over right perfect rings.

Thus, the class of RD-pure extending modules offers a more flexible framework for studying summand-essential substructures, preserving divisibility behavior without requiring full purity.

\begin{definition}[\cite{wisbauer1991foundations}*{34.8(c)}]
    A submodule \( P \leq M \) is said to be \emph{relatively divisible (RD-pure)} if \( rP = rM \cap P \) for every \( r \in R \).
\end{definition}

It follows directly from the definition that every pure submodule is RD-pure. A module \( M \) is said to be \emph{RD-pure extending} if every RD-pure submodule of \( M \) is essential in a direct summand.

\begin{proposition}
    Every pure extending module is RD-pure extending.
\end{proposition}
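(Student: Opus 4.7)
The plan is to reduce the RD-pure case to the pure case via an essential intermediate. Given a pure extending module $M$ and an RD-pure submodule $P \leq M$, the goal is to exhibit a direct summand $D \leq^\oplus M$ with $P \leq_e D$. My approach is to locate an intermediate pure submodule $Q \leq M$ satisfying $P \leq_e Q$; once such a $Q$ is in hand, the pure extending hypothesis on $M$ furnishes a summand $D \leq^\oplus M$ with $Q \leq_e D$, and transitivity of essential extensions then gives $P \leq_e D$.

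The main technical step is the construction of $Q$. The natural candidate is a \emph{pure closure} of $P$ in $M$, formed either as the intersection of all pure submodules of $M$ containing $P$ or, alternatively, as a suitable maximal element produced by a Zorn's Lemma argument applied to the poset of pure submodules containing $P$. The hard part will be verifying that such a $Q$ is in fact pure in $M$ and that $P \leq_e Q$: intersections of pure submodules are not pure in general without additional hypotheses on the ring (e.g.\ coherence), and an arbitrary RD-pure submodule need not sit essentially inside any pure overmodule. Bridging this gap between the element-wise condition $rP = rM \cap P$ and the ideal-wise condition $IP = IM \cap P$ is the delicate point, and is precisely what the containment pure $\subseteq$ RD-pure (recorded immediately before the proposition) leaves open.

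If the direct pure-closure approach resists a fully general argument, my fallback is to exploit the structural closure properties already established for pure extending modules, in particular closure under direct summands (Proposition \ref{summandpurec1}) and finite direct sums (Theorem \ref{direct sum}), together with the submodule-level containment pure $\subseteq$ RD-pure, to construct the summand $D$ directly from a pure submodule that essentially contains $P$ inside a decomposition of $M$. The conceptual takeaway is that the pure extending property is expected to propagate from the smaller test class (pure submodules) to the larger one (RD-pure submodules) precisely because every pure submodule is already RD-pure, so no new essentiality obligations arise beyond what a suitable essential-pure envelope of each RD-pure submodule already produces.
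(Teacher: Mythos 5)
Your proposal has a genuine gap, and it is one that cannot be repaired: everything hinges on producing, for each RD-pure submodule $P\leq M$, a \emph{pure} submodule $Q\leq M$ with $P\leq_e Q$, and you correctly flag that you cannot verify this step. In fact it fails, and the paper's own example introducing RD-purity already witnesses the failure. Take $R=\Bbbk[x,y]/(x,y)^2$, $M=R\oplus R$, and $N=\{(\bar xa,\bar ya):a\in R\}=\Bbbk\cdot(\bar x,\bar y)$, which is RD-pure but not pure in $M$. Since $R$ is artinian and $M$ is finitely generated, every pure submodule of $M$ is a direct summand, so $M$ is pure extending and its pure submodules are exactly $0$, $M$, and the rank-one free summands $wR$ with $w=(u,v)$ unimodular. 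One checks that no unimodular $(u,v)$ and $a\in R$ satisfy $(ua,va)=(\bar x,\bar y)$ (with $a_0=0$ the conditions reduce to $u_0a_1=1$, $u_0a_2=0$, $v_0a_2=1$, which are inconsistent, while $a_0\neq 0$ forces $u_0=v_0=0$), and $N$ is not essential in $M$ because it meets the simple submodule $\Bbbk(\bar y,0)$ trivially. Hence $N$ is essential in no pure submodule and in no direct summand: the essential pure envelope your argument needs does not exist, $M$ is pure extending but not RD-pure extending, and the proposition itself is false as stated.

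The underlying issue is the direction of the containment being invoked. Since every pure submodule is RD-pure, the RD-pure submodules form the \emph{larger} test class, so ``RD-pure extending'' is the \emph{stronger} condition; the observation that pure implies RD-pure at the submodule level yields only that every RD-pure extending module is pure extending, not the converse. The paper's proof of this proposition consists of exactly that one-line observation and therefore establishes only the reverse implication. Your instinct that the stated direction requires genuine work — an essential pure closure of each RD-pure submodule — was the right diagnosis; the example above shows that this is precisely where the implication breaks down.
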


\begin{proof}
    Indeed, since every pure submodule is RD-pure, every pure extending module remains an RD-pure extending module.
\end{proof}

As noted in \cite{lam1999lectures}*{p.~159}, RD-pure submodules need not be pure. Consequently, the class of RD-pure extending modules properly contains that of pure extending modules, as illustrated in Example \ref{RD-pure not pure}.

We next establish two closure properties for RD-pure extending modules.

\begin{proposition}
Let \( M \) be an RD-pure extending module. Then:
\begin{enumerate}
    \item Every direct summand of \( M \) is RD-pure extending.
    \item Every RD-pure submodule of \( M \) is RD-pure extending.
\end{enumerate}
Moreover, if \( R \) is a PDS ring and \( M \) is a flat pure extending module, then every RD-pure submodule of \( M \) is RD-pure extending.
\end{proposition}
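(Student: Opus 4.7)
The proposition splits into three parts: (1) inheritance under direct summands, (2) inheritance under RD-pure submodules, and (3) the ``moreover'' clause. Each part mirrors a pure-case analog (Propositions \ref{summandpurec1} and \ref{puresubmod}), with RD-pure replacing pure throughout.

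For (1), I would adapt the argument of Proposition \ref{summandpurec1} directly. Write $M = N \oplus N'$ and let $P \leq N$ be RD-pure in $N$. The first step is to verify that $P$ is RD-pure in $M$: since $N$ is a direct summand (hence RD-pure) in $M$, the inclusion $rM \cap P \subseteq rM \cap N = rN$ combined with $P \subseteq N$ yields $rM \cap P \subseteq rN \cap P = rP$ by the RD-purity of $P$ in $N$, while the reverse inclusion $rP \subseteq rM \cap P$ is trivial. The RD-pure extending property of $M$ then produces a direct summand $D \leq^\oplus M$ with $P \leq_e D$; writing $M = D \oplus C$, the structural argument from Proposition \ref{summandpurec1} identifies $D \cap N$ as a direct summand of $N$, and $P \leq_e D \cap N$ inside $N$ follows because any nonzero $x \in D \cap N \subseteq D$ satisfies $0 \neq xr \in P$ for some $r \in R$.

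For (2), the key is transitivity of RD-purity: if $Q \leq P \leq M$ with $Q$ RD-pure in $P$ and $P$ RD-pure in $M$, then
\[
rM \cap Q = rM \cap P \cap Q = rP \cap Q = rQ,
\]
so $Q$ is RD-pure in $M$. Applying the RD-pure extending property of $M$, choose $D \leq^\oplus M$ with $Q \leq_e D$, and set $D' := D \cap P$. The essentiality $Q \leq_e D'$ inside $P$ is immediate by the same essentiality argument as in (1). The main obstacle is showing $D' \leq^\oplus P$: since $P$ is only RD-pure (and not necessarily a direct summand) in $M$, the idempotent $e \colon M \to M$ with image $D$ need not restrict to an endomorphism of $P$. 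I would attempt to construct the required projection $P \twoheadrightarrow D'$ by using the RD-purity of $P$ to lift $D$-components of elements of $P$ back into $P$; this descent of the splitting from $M$ to $P$ is the delicate heart of the argument.

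For the ``moreover'' clause, assume $R$ is PDS and $M$ is a flat pure extending module. By Fieldhouse's equivalence, purity and RD-purity coincide on flat modules \cite{fieldhouse1969pure}, so every RD-pure submodule of $M$ is pure. The PDS property forces every pure submodule to be a direct summand, so every RD-pure submodule of $M$ is a direct summand. By Proposition \ref{summandpurec1}, such a submodule inherits the pure extending property from $M$, and since pure extending implies RD-pure extending (as noted earlier in this subsection), the conclusion follows.
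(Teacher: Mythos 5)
Your treatment of part (1) and of the ``moreover'' clause is correct and follows essentially the same route as the paper: for (1) you verify that RD-purity passes from a summand $N$ up to $M$ (the computation $rM\cap P=(rM\cap N)\cap P=rN\cap P=rP$ is exactly the point) and then rerun the argument of Proposition~\ref{summandpurec1}; for the final clause you combine the coincidence of pure and RD-pure submodules for flat modules with the PDS hypothesis and Proposition~\ref{puresubmod}, which is precisely what the paper does.

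Part (2), however, contains a genuine gap, and you have correctly located but not closed it. After the (correct) transitivity computation $rM\cap Q=rM\cap P\cap Q=rP\cap Q=rQ$ and the choice of a summand $D\leq^\oplus M$ with $Q\leq_e D$, the proof requires $D\cap P$ to be a direct summand of $P$, and this is never established. The obstruction is real: if $e\in\morp_R(M)$ is the idempotent with $eM=D$, RD-purity of $P$ in $M$ gives no control over $e(P)$, so $e$ need not restrict to an endomorphism of $P$, and there is no visible candidate for a splitting of $D\cap P\hookrightarrow P$; your proposed ``lifting of $D$-components of elements of $P$ back into $P$'' is a hope, not an argument, since the RD-purity condition $rM\cap P=rP$ says nothing about components with respect to an external decomposition $M=D\oplus C$. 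For what it is worth, the paper's own proof of (2) only appeals to ``a standard argument similar to the proof of Proposition~\ref{summandpurec1}''; but that argument decomposes the intermediate module as $(D\cap N)\oplus(C\cap N)$ using that $N$ is itself a direct summand of $M$ — a hypothesis an RD-pure submodule $K$ does not satisfy — so the missing step is not supplied there either. To complete your proof you would need either to produce the splitting of $D\cap P$ in $P$ directly from RD-purity, or to choose the summand $D$ more carefully (for instance by first passing to a closure of $Q$ inside $P$); as written, the claim $D\cap P\leq^\oplus P$ is unjustified.
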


\begin{proof}
\begin{enumerate}
    \item Direct summands preserve RD-pure submodules and their essentiality in summands, hence the property is inherited.
    \item Let $K$ be an RD-pure submodule of $M$, and let $P$ be an RD-pure submodule of $K$. Since the composition of RD-pure embeddings is RD-pure, $P$ is an RD-pure submodule of $M$. As $M$ is RD-pure extending, $P$ is essential in a direct summand of $M$. A standard argument (similar to the proof of Proposition \ref{summandpurec1}) shows that this property is inherited by $K$, so $K$ is RD-pure extending.
\end{enumerate}
In a PDS ring, every pure submodule is a direct summand. Since \( M \) is flat, RD-pure submodules coincide with pure submodules. The result now follows from Proposition \ref{puresubmod}.
\end{proof}

\begin{proposition} \label{free(proj.) RD-PE <=> PE}
A projective module (in particular, a free module) is RD-pure extending if and only if it is pure extending.
\end{proposition}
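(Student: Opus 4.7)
The plan is to reduce the claim to Fieldhouse's theorem that purity and RD-purity coincide on flat modules, and then invoke projectivity only through its consequence of flatness. One direction is essentially free, while the other direction is precisely where flatness is used.

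First I would dispose of the easy direction. Assume $M$ is a projective module and pure extending. Since every pure submodule of $M$ is RD-pure (this is immediate from the definitions and is recorded in the proposition just preceding the statement), any RD-pure submodule that is in fact pure is essential in a direct summand. But for this direction we actually need the implication the other way: if $M$ is pure extending, we want $M$ to be RD-pure extending. This is exactly the content of the proposition stated just before, so no further work is needed.

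For the nontrivial direction, assume $M$ is projective and RD-pure extending. The key input is that $M$ is flat, which holds because projective modules are flat. I would then invoke Fieldhouse's theorem cited in the preceding discussion, which asserts that over a flat module purity and RD-purity of submodules coincide. Consequently, every pure submodule $P$ of $M$ is an RD-pure submodule of $M$, and since $M$ is RD-pure extending, $P$ is essential in a direct summand of $M$. Hence $M$ is pure extending.

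The only potential obstacle is a question of formulation: Fieldhouse's theorem must be applied in the form ``if $M$ is flat, then every RD-pure submodule of $M$ is pure,'' rather than the weaker global statement that RD-purity equals purity precisely for flat modules. Since the two formulations are equivalent for submodules of a fixed flat module, this is not a genuine difficulty, but it is worth stating cleanly in the writeup. The parenthetical ``in particular, a free module'' requires no additional argument, as free modules are projective.
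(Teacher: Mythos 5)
There is a genuine gap: you have the two directions inverted, and the direction that carries all the content is left unproved. Since every pure submodule is RD-pure for an arbitrary module, the RD-pure extending condition quantifies over a \emph{larger} family of submodules than the pure extending condition, so the implication ``RD-pure extending $\Rightarrow$ pure extending'' is immediate and uses neither projectivity nor Fieldhouse's theorem; your ``nontrivial direction,'' in which you invoke flatness only to conclude that pure submodules are RD-pure, is in fact the trivial one. The direction that genuinely needs the projectivity hypothesis is ``pure extending $\Rightarrow$ RD-pure extending'': given an RD-pure submodule $N$ of $M$, the pure extending hypothesis says nothing about $N$ unless you first show that $N$ is pure, and that is exactly where the paper's proof uses the assertion that purity and RD-purity coincide for submodules of a projective module. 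You dispose of this direction by citing the preceding proposition (``every pure extending module is RD-pure extending''), but that proposition's justification is the same inverted inclusion (``every pure submodule is RD-pure'') and does not establish the implication for a general module, since an RD-pure submodule need not be pure and hence need not be essential in any summand. Thus the step that makes the statement specific to projective modules never actually appears in your argument, and your concluding remark even identifies the needed form of the lemma (``RD-pure $\Rightarrow$ pure on flat modules'') while applying it only in the opposite, vacuous direction.

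A secondary caution: the lemma on which both your proof and the paper's ultimately rest---that every RD-pure submodule of a projective module is pure---is not a formality and should be checked against the paper's own example preceding this proposition. There the submodule $N=\{(\bar{x}a,\bar{y}a): a\in R\}$ of the \emph{free} module $R\oplus R$ over $R=\Bbbk[x,y]/(x,y)^2$ is shown to be RD-pure but not pure, so ``purity and RD-purity coincide for flat modules'' cannot be read as a statement about submodules of a flat ambient module. Whatever the correct formulation of the Fieldhouse/Lam result, a complete writeup must state it precisely and deploy it in the direction RD-pure $\Rightarrow$ pure, in the half of the equivalence where it is actually needed.
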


\begin{proof}
Since RD-purity and purity coincide for projective modules (see \cite{lam1999lectures}*{p.~159}), every RD-pure submodule of a projective module is pure, and conversely. Hence, the extending conditions are equivalent.
\end{proof}

\begin{corollary} \label{RD-pure => pure}
Let \( R \) be a right perfect ring, and let \( M \) be a flat \( R \)-module. Then \( M \) is RD-pure extending if and only if it is pure extending.
\end{corollary}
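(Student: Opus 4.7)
The plan is to reduce this corollary to the already-established Proposition \ref{free(proj.) RD-PE <=> PE} by showing that under the hypothesis the module $M$ is in fact projective. The key structural fact is Bass's theorem characterizing right perfect rings: a ring $R$ is right perfect if and only if every flat right $R$-module is projective (see, e.g., \cite{lam1999lectures}*{Theorem 24.25}). Thus the hypothesis ``$R$ right perfect and $M$ flat'' automatically promotes $M$ to a projective module, which is precisely the setting in which RD-purity and purity of submodules are known to coincide.

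First, I would state the reduction: since $R$ is right perfect and $M$ is flat, Bass's theorem yields that $M$ is projective. Then, by Proposition \ref{free(proj.) RD-PE <=> PE}, every RD-pure submodule of $M$ is pure, and vice versa. Consequently, the two extending conditions coincide on $M$: a submodule $P \leq M$ is essential in a direct summand under the RD-pure extending hypothesis if and only if the same holds under the pure extending hypothesis, since the collections of submodules being tested are identical.

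There is no real obstacle here; the content is entirely in the preceding results, and the corollary merely packages them. The only point worth making explicit is the direction ``RD-pure extending $\Rightarrow$ pure extending,'' which uses that every pure submodule is RD-pure (hence essential in a direct summand by hypothesis), together with the converse inclusion that holds for projective modules. The reverse direction ``pure extending $\Rightarrow$ RD-pure extending'' is immediate from the general inclusion of classes established just above, and does not even require flatness or perfectness of $R$.
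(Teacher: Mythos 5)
Your proof is correct and takes essentially the same route as the paper: invoke Bass's theorem that flat modules over a right perfect ring are projective, then apply Proposition~\ref{free(proj.) RD-PE <=> PE}. The extra remarks about which direction needs which hypothesis are fine but not needed beyond the paper's two-line argument.
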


\begin{proof}
In a right perfect ring, every flat right module is projective (see \cite{lam2001firstcourse}*{Theorem 24.25}). Hence, the equivalence follows immediately from Proposition~\ref{free(proj.) RD-PE <=> PE}.
\end{proof}

The preceding framework initially provides a foundation for decomposition results involving factor modules and cyclicity conditions, extending the classical results of Osofsky and Smith. Later, we explore the relationship between morphic and pure extending modules.

\section{Applications} \label{sec4}

\subsection{A Decomposition Approach Toward the Osofsky--Smith Theorem} \label{3.1}

\subsubsection{Historical Background and Prior Results}

Osofsky and Smith established the following classical result in \cite{osofsky1991cyclic}:

\begin{theorem}[\cite{osofsky1991cyclic}]
    Let \( M \) be a cyclic module such that every cyclic submodule of \( M \) is completely extending. Then \( M \) is a finite direct sum of uniform modules.
\end{theorem}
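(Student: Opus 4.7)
The plan is to reduce the theorem to two classical components: establishing that $M$ has finite uniform (Goldie) dimension, then invoking the standard fact that an extending module of finite uniform dimension decomposes as a finite direct sum of uniform submodules. Because $M$ is a cyclic submodule of itself, the hypothesis already forces $M$ and every factor module $M/K$ to be extending; the same applies verbatim to any cyclic submodule $N \leq M$ and any factor $N/L$, which is what gives us room to maneuver.

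First I would record the easy half of the reduction. If $N$ is extending with uniform dimension $n$, then $N$ decomposes as a direct sum of $n$ uniform submodules: pick a uniform submodule $U_1 \leq N$, essentially embed it in a direct summand via the $C_1$ condition, and iterate on the complementary summand. Finite uniform dimension forces termination in at most $n$ steps, and applying this to $N = M$ will yield the desired decomposition once finite uniform dimension is secured.

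The substantive work lies in proving that $M$ has finite uniform dimension. Suppose for contradiction that $M$ contains an infinite independent family of nonzero submodules. Using that $M$ is extending, refine this to an infinite independent family $\{U_i\}_{i \geq 1}$ of uniform submodules, each essential in a direct summand of $M$. The goal is to construct a cyclic submodule $N \leq M$ together with a proper factor $N/K$ in which the images of the $U_i$ still form an independent family that cannot be essentially contained in any direct summand — directly contradicting the hypothesis that $N/K$ be extending. The construction exploits cyclicity: fixing a generator $x$ of $M$, one tracks right annihilator ideals of carefully chosen combinations $x r_i$ and arranges the quotient so that a suitable diagonal submodule encodes the entire infinite independent structure, preventing its essential closure from splitting off.

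The main obstacle is precisely this translation step — converting an infinite chain of independent uniform submodules into a concrete obstruction to the extending property inside a cyclic factor module. Here the full force of the hypothesis (not merely that $M$ is extending, but that \emph{every} cyclic submodule has \emph{every} factor extending) is indispensable, since extending modules in isolation can have infinite uniform dimension, as large injective modules demonstrate. The combinatorial care required to select $N$ and $K$ so that the obstruction actually propagates into a single cyclic factor — rather than dissipating across an uncontrolled family of quotients — is what accounts for the depth of the theorem, and it is the step I expect to be the most delicate to make rigorous.
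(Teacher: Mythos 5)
Your overall architecture is the right one: the theorem does reduce to showing that $M$ has finite uniform dimension, after which the decomposition follows from the standard fact that an extending module of finite uniform dimension is a finite direct sum of uniform submodules (take a uniform $U\leq M$, note that the summand $D$ with $U\leq_e D$ is itself uniform, split it off, and induct on the complement, which is again extending by heredity of the $C_1$ condition to summands). That half of your argument is correct and complete.

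The gap is that the entire substance of the theorem lies in the step you defer: proving that a cyclic module all of whose cyclic subfactors are extending cannot contain an infinite independent family of nonzero submodules. Your description of how this would go --- ``tracks right annihilator ideals of carefully chosen combinations $xr_i$ and arranges the quotient so that a suitable diagonal submodule encodes the entire infinite independent structure'' --- is a plausible heuristic but is not an argument; you have not exhibited the submodule $N$, the kernel $K$, or the submodule of $N/K$ that fails to be essential in any summand, nor shown why the independence of the $U_i$ survives passage to the quotient (independence is notoriously fragile under taking quotients, and an essential closure in $N/K$ can perfectly well split off unless one has arranged very specific interactions between the $U_i$ and $K$). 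This is precisely the part of the Osofsky--Smith theorem that occupies the bulk of their paper and proceeds through a chain of technical lemmas on complements and essential extensions in cyclic subfactors; it is not recoverable from the outline you give. As it stands the proposal proves only the easy implication (finite uniform dimension plus extending implies the decomposition) and asserts, without proof, the hard one. Note also that the present paper does not reprove this result either --- it is quoted from the original source --- so there is no shortcut available from the surrounding text: the purity-based generalization proved here (via endoartinian conditions and purifications) uses a genuinely different finiteness mechanism and does not specialize to a proof of the classical statement.
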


Here, a module is said to be \emph{completely extending} if every quotient module is extending. This result refined earlier theorems by removing previously required conditions and provided a clean decomposition criterion for cyclic modules. Further generalizations were obtained by Dung \cite{dung1992generalized} and by Huynh, Dung, and Wisbauer \cite{huynh1991modules}. A natural question that remained open was whether the hypothesis on cyclic subfactors could be relaxed to arbitrary factor modules (see \cite{dung1994extending}*{p. 65}).

\begin{quote}
Let \( M \) be a cyclic (finitely generated) module with all factor modules extending. Is $M$ a direct sum of uniform modules?
\end{quote}

Dung \cite{dung1994extending}*{Corollary 9.3} resolved this affirmatively for self-projective modules. The completely extending condition is significantly stronger than purity. Replacing it with the pure extending condition broadens the class of modules while preserving much of the decomposition behavior. In this subsection, we provide a partial answer to the general case under the assumption that all factor modules are pure extending.

Our approach builds on the theory of endomorphism ring conditions. The concepts of endonoetherian and endoartinian modules, originally introduced in a lost preprint by Kaidi and Campos \cite{kaidimodules} and later revisited in \citelist{\cite{gouaid2020endo-noetherian} \cite{gera2025modules}}, play a key role. A module \( M \) is said to be \emph{endonoetherian} (resp., \emph{endoartinian}) if it satisfies the ascending (resp., descending) chain condition on endomorphism kernels (resp., images). These conditions provide a means to control decomposition in terms of endomorphism behavior.

The following two observations underpin our main result:
\begin{enumerate}
    \item A cyclic module whose factor modules are endoartinian is itself endoartinian (see Theorem \ref{cyclic. factor endonoe => endonoe}).
    \item Every proper pure submodule of an indecomposable pure extending module is pure-uniform (see Proposition \ref{indec. pure ext. => uni.}).
\end{enumerate}

These observations together yield a structural generalization of the Osofsky–Smith theorem in the context of pure extending modules.

\subsubsection{Main Theorem and Consequences}

The following theorem identifies purity-based decomposability criteria that persist in the absence of full extending behavior.

\begin{theorem} \label{cyclic fac PE => uni. submod.}
    Let \( M \) be a cyclic module such that every cyclic factor module of \( M \) is pure extending. Then \( M \) is a finite direct sum of pure-uniform submodules.
\end{theorem}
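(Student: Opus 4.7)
The plan is to assemble the two observations highlighted in the preamble to the theorem --- the endoartinian inheritance theorem for cyclic modules and the pure-uniform structure of indecomposable pure extending modules --- into a decomposition argument in the spirit of the classical Osofsky--Smith proof. First, since $M$ is cyclic, every homomorphic image of $M$ is again cyclic, so by hypothesis every factor module of $M$ is cyclic and pure extending. The key intermediate step is to upgrade this to the statement that every factor module of $M$ is endoartinian. Concretely, for any factor $M/N$, a strictly descending chain of endomorphism images $\im f_1 \supsetneq \im f_2 \supsetneq \cdots$ in $\morp_R(M/N)$ consists of cyclic submodules (since $M/N$ is cyclic), and the pure extending property supplies summand-essential data that, combined with cyclicity, rules out such infinite chains.

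Once every factor of $M$ is endoartinian, Theorem \ref{cyclic. factor endonoe => endonoe} gives that $M$ itself is endoartinian. Standard endoartinian decomposition theory --- Fitting's lemma applied in $\morp_R(M)$ together with idempotent lifting --- then yields a finite decomposition
\[
M = M_1 \oplus M_2 \oplus \cdots \oplus M_n
\]
into indecomposable summands. Each $M_i$ is a direct summand of the pure extending module $M$, hence pure extending by Proposition \ref{summandpurec1}. Since $M_i$ is indecomposable, its only nonzero direct summand is itself, so any nonzero pure submodule of $M_i$ is essential in $M_i$; consequently any two nonzero pure submodules of $M_i$ meet nontrivially, i.e.\ $M_i$ is pure-uniform. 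Proposition \ref{indec. pure ext. => uni.} further propagates this to the proper pure submodules of each $M_i$, completing the argument.

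I expect the main obstacle to be the intermediate passage from ``cyclic pure extending'' to ``endoartinian''. Unlike the classical Osofsky--Smith setting, where the extending condition controls \emph{all} submodules and plugs directly into chain-condition machinery, the pure extending hypothesis only constrains pure submodules, whereas endoartinianness is a chain condition on arbitrary endomorphism images. The bridge must exploit the cyclicity: images of endomorphisms in a cyclic module are themselves cyclic, and their pure closures can be fit into the summand/essential lattice controlled by the pure extending property. Making this passage precise --- likely by a reduction that replaces an image chain with a chain of pure submodules inside a fixed summand --- is where the structural results of Section~\ref{sec2} must carry most of the weight, and is the technical heart of the proof.
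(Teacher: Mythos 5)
Your proposal follows essentially the same route as the paper: reduce to endoartinianness via the factor-module hypothesis and Theorem \ref{cyclic. factor endonoe => endonoe}, decompose the endoartinian module into finitely many indecomposable summands, and then use Proposition \ref{summandpurec1} together with Proposition \ref{indec. pure ext. => uni.} to see that each indecomposable pure extending summand is pure-uniform; this is exactly the paper's Theorem \ref{pure ext. endo=> direct sum}. The one step you leave as a sketch---passing from ``every cyclic factor is cyclic and pure extending'' to ``every factor is endoartinian''---is precisely where the paper's Proposition \ref{cyc. fac. PE=> endoart.} does the work, and it does so along the lines you anticipate: given a strictly descending chain $N_1\supsetneq N_2\supsetneq\cdots$ in a cyclic pure extending factor $N$, one passes to the purifications $\pur(N_j)$, embeds each essentially in a direct summand $D_j$ of $N$, uses the finite uniform dimension of $N$ to force the sequence $\udim(D_j)$ (and hence the $D_j$ and the $\pur(N_j)$ themselves) to stabilize, and derives a contradiction---in fact concluding the stronger statement that every cyclic factor is artinian. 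So your outline is faithful to the paper's argument; the only content missing from your write-up is this uniform-dimension stabilization argument, which you correctly identify as the technical heart.
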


Before proceeding to the main theorem, we examine two structural properties essential to our proof: one concerning endoartinianity in cyclic modules, and another describing the uniformity of pure submodules in indecomposables. 

\begin{theorem} \label{cyclic. factor endonoe => endonoe}
    Let \( M \) be a cyclic module such that every factor module of \( M \) is endoartinian. Then \( M \) is endoartinian.
\end{theorem}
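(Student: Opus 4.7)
The plan is nearly immediate under a literal reading of the hypothesis: since $M \cong M/0$ is itself a factor module of $M$, the assumption that every factor module of $M$ is endoartinian applies in particular to $M$, giving the conclusion. In this reading, cyclicity and the descending chain condition on images play no genuine role, and the result is best understood as a definitional unpacking recorded for later convenience in Theorem~\ref{cyclic fac PE => uni. submod.}, where the pure-extending hypothesis on factor modules is translated into endoartinianity for each factor and then transported up to $M$.

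If instead one reads ``factor module'' in the stricter sense of a \emph{proper} quotient $M/N$ with $N\neq 0$, then cyclicity becomes essential and I would argue by contradiction. Assume a strictly descending chain $\im(f_1)\supsetneq \im(f_2)\supsetneq\cdots$ with $f_i\in S$. Writing $M=R/I$, each $\im(f_i)$ is cyclic, say $\im(f_i)=x_iR+I/I$, and $x_{i+1}\equiv x_ir_i\pmod I$ for some $r_i\in R$. Choose a nonzero submodule $N\le M$ --- for instance $N=\bigcap_i\im(f_i)$ when this intersection is nonzero, or a simple submodule of some $\im(f_k)$ --- so that $M/N$ is a proper factor and hence endoartinian. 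Then construct induced endomorphisms $\bar g_i\in\morp_R(M/N)$ from the cyclic generator, using $\ann_R(1+I)=I\subseteq \ann_R(x_i+I)$ for well-definedness, and verify that the images $\im(\bar g_i)$ descend strictly in $M/N$, contradicting endoartinianity.

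The main obstacle in the nontrivial reading is lifting the chain of images in $M$ to a chain of endomorphism-images inside a single factor $M/N$: the original $f_i$ need not preserve $N$, so descent is unavailable, and the induced endomorphisms must be rebuilt from the cyclic generator using annihilator containments, with the delicate point being to ensure that the chain does not collapse after passage to $M/N$. Handling the boundary case $\bigcap_i\im(f_i)=0$ would likely require passing instead to $M/\ker(f_i)$ for a judicious index where the kernel is nonzero, or observing that all $f_i$ are injective and extracting a separate contradiction from the cyclicity of $M$.
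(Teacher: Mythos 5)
Your first reading is the one under which the statement is true as written: $M\cong M/0$ is a factor module of $M$, so the hypothesis applies to $M$ itself and the conclusion is immediate. The paper's own proof is, in substance, your second sketch: it assumes a strictly descending chain $M=f_0(M)\supsetneq f_1(M)\supsetneq\cdots$ of endomorphism images, sets $N=\bigcap_i f_i(M)$, and asserts that $M/N$ inherits a corresponding chain of endomorphism images $f_i(M)/N$, contradicting the hypothesis. You have correctly identified the two weak points of that argument. First, if $N=0$, then $M/N=M$ and the ``contradiction'' is nothing but the trivial reading again. Second, when $N\neq 0$, the submodules $f_i(M)/N$ are not obviously images of endomorphisms of $M/N$: since $N$ need not be $f_i$-invariant, the $f_i$ do not descend to $M/N$, and the paper offers no construction of induced endomorphisms (your proposed repair --- rebuilding maps on $M/N$ from the cyclic generator via annihilator containments and then checking that the chain does not collapse --- is exactly the missing step, but you do not carry it out, and neither does the paper). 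Note also that cyclicity is used by the paper only to assert that $M/N$ is cyclic, which is not needed for the contradiction as stated; this is a further sign that the written proof is really leaning on the literal reading. In short: under the literal reading your proof is complete and the theorem is a definitional triviality; under the stricter reading your sketch and the paper's proof share the same unresolved gap, which you at least name explicitly.
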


\begin{proof}
    Suppose $M$ is not endoartinian. Then there exists a strictly descending chain
\[
M = f_0(M) \supsetneq f_1(M) \supsetneq f_2(M) \supsetneq \cdots
\]
of endomorphism images. Let $N = \bigcap_i f_i(M)$. The quotient $M/N$ is cyclic (as a homomorphic image of $M$) and inherits a corresponding chain
\[
M/N \supseteq f_1(M)/N \supseteq f_2(M)/N \supseteq \cdots
\]
of endomorphism images, showing $M/N$ is not endoartinian—a contradiction.
\end{proof}

Recall that a submodule $N$ of $M$ is \emph{pure-essential} if for every pure submodule $P$ of $M$ with $P\cap N=0$, we have $P=0$. A module $M$ is \emph{pure-uniform} if every nonzero submodule of $M$ is pure-essential.

\begin{proposition} \label{indec. pure ext. => uni.}
Let \(M\) be an indecomposable pure extending module.  Then every nonzero pure submodule of \(M\) is pure-essential (equivalently, every pure submodule of \(M\) is pure-uniform).
\end{proposition}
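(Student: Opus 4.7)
The plan is to use pure extending plus indecomposability to upgrade pure-essentiality to ordinary essentiality in $M$, after which both formulations follow immediately. Concretely, given a nonzero pure submodule $P\le M$, the pure extending hypothesis yields a direct summand $D\le^\oplus M$ with $P\le_e D$. Indecomposability of $M$ forces $D\in\{0,M\}$, and $P\neq 0$ rules out $D=0$, so $D=M$ and $P\le_e M$.

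Pure-essentiality of $P$ in $M$ is then automatic: for any pure submodule $Q\le M$ with $Q\cap P=0$, essentiality of $P$ in $M$ already forces $Q=0$ (without even using the purity of $Q$). This settles the first formulation.

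For the parenthetical equivalence, the additional ingredient is transitivity of purity: if $Q$ is pure in $P$ and $P$ is pure in $M$, then $Q$ is pure in $M$, which is standard for the tensor-flatness definition of purity. Given any pure submodule $P\le M$ and a nonzero $N\le P$, pick a pure submodule $Q\le P$ with $Q\cap N=0$. Transitivity makes $Q$ pure in $M$, and if $Q\neq 0$ the first part of the argument gives $Q\le_e M$, contradicting $Q\cap N=0$ since $N$ is a nonzero submodule of $M$. Hence $Q=0$, so $N$ is pure-essential in $P$, proving that $P$ is pure-uniform.

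I do not foresee any substantive obstacle: once the pure extending definition is applied and indecomposability collapses the resulting direct summand to $M$, everything else is a direct unpacking of the definitions. The only step worth flagging is the transitivity of purity used to pass from $Q$ pure in $P$ to $Q$ pure in $M$; this is immediate from the standard tensor-product characterization of purity and is the one place where the choice of purity notion is implicitly invoked.
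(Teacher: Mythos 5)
Your proof is correct and follows essentially the same route as the paper's: apply the pure extending hypothesis to a nonzero pure submodule, use indecomposability to force the resulting summand to be all of $M$, conclude essentiality in $M$, and then read off both formulations (with transitivity of purity handling submodules of a pure submodule $P$). If anything, your treatment of the pure-uniform part is slightly more faithful to the stated definition, since you verify pure-essentiality for arbitrary nonzero submodules $N\le P$ rather than only for pure ones.
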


\begin{proof}
Let \(P\le M\) be a nonzero pure submodule and let \(X\leq P\) be a nonzero pure submodule of \(P\). Since \(X\) is pure in \(M\) and \(M\) is pure extending, there is a direct summand \(D\leq^{\oplus}M\) with \(X\leq_e D\). As \(M\) is indecomposable and \(X\neq0\), necessarily \(D=M\). Hence \(X\leq_e M\).

Now let \(Y\le P\) be any nonzero pure submodule of \(P\). Since \(Y\le M\) is nonzero and \(X\leq_e M\), we have \(X\cap Y\neq0\). Thus every nonzero pure submodule \(X\) of \(P\) meets every nonzero pure submodule \(Y\) of \(P\), i.e. \(X\) is pure-essential in \(P\).

As \(X\) was an arbitrary nonzero pure submodule of \(P\), it follows that \(P\) is pure-uniform.
\end{proof}

\begin{remark}
\begin{enumerate}
    \item The indecomposability hypothesis in Proposition \ref{indec. pure ext. => uni.} is essential. For instance, let \( R = \Bbbk[x]/(x^2) \) be a local Frobenius algebra and put \( M = R \oplus R \). Since \( R \) is self-injective, \( M \) is injective (hence pure extending). However, the pure submodule \( P = M \) is not pure-uniform, because the nonzero pure submodules \( R \oplus 0 \) and \( 0 \oplus R \) have zero intersection, so neither is pure-essential in \( M \).

    \item Proposition \ref{indec. pure ext. => uni.} shows that indecomposable pure extending modules exhibit a pure analogue of a well-known property of extending modules: in both cases, submodules behave uniformly with respect to (pure) essentiality. This highlights the structural coherence between the classical and pure settings.
\end{enumerate} \qed
\end{remark}

\begin{theorem} \label{pure ext. endo=> direct sum}
    Let \( M \) be a pure extending endoartinian module. Then \( M \) decomposes as a finite direct sum of pure-uniform submodules.
\end{theorem}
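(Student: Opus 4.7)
The plan is to reduce to the indecomposable case and then invoke Proposition~\ref{indec. pure ext. => uni.}. Specifically, I would first show that the endoartinian hypothesis forces $M$ to be a finite direct sum of indecomposable submodules, and then observe that each such summand is pure-uniform because it inherits the pure extending property from $M$.

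For the first step, I would argue by contradiction: suppose $M$ is not a finite direct sum of indecomposables. Then $M$ itself is decomposable, so write $M = A_1 \oplus B_1$ with both summands nonzero. Among $A_1$ and $B_1$, at least one must again fail to be a finite direct sum of indecomposable summands (otherwise $M$ itself would be one by merging the two lists); after relabelling, assume $B_1$ has this property. Iterating produces, for every $n\ge 1$, a decomposition
\[
M = A_1 \oplus A_2 \oplus \cdots \oplus A_n \oplus B_n, \qquad B_n = A_{n+1} \oplus B_{n+1}, \quad B_n \neq 0.
\]
Let $e_n \in \morp_R(M)$ be the projection onto $B_n$ along $A_1 \oplus \cdots \oplus A_n$. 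Then $e_n(M) = B_n$, and $B_1 \supsetneq B_2 \supsetneq \cdots$ is a strictly descending chain of endomorphism images of $M$, contradicting the endoartinian hypothesis. Hence there is a finite decomposition $M = \bigoplus_{i=1}^{k} N_i$ with each $N_i$ indecomposable.

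For the second step, Proposition~\ref{summandpurec1} ensures that every direct summand of a pure extending module is again pure extending, so each $N_i$ is pure extending. Since $N_i$ is indecomposable, Proposition~\ref{indec. pure ext. => uni.} (applied with $P = N_i$, which is trivially pure in itself) yields that $N_i$ is pure-uniform. Combining both steps, $M$ decomposes as a finite direct sum of pure-uniform submodules.

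The main obstacle is the first step: turning the abstract failure of finite indecomposable decomposability into an honest strictly descending chain of endomorphism \emph{images} (as opposed to arbitrary submodules). The key device is that each partial decomposition yields a canonical idempotent endomorphism $e_n \in \morp_R(M)$ with $e_n(M)=B_n$, so the chain $B_n \supsetneq B_{n+1}$ is exhibited as a descending chain in the endomorphism-image lattice, which is precisely what endoartinianity forbids. Once this structural reduction is in place, the passage from indecomposability to pure-uniformity is immediate from the earlier propositions.
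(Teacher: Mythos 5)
Your proof is correct and follows essentially the same route as the paper: decompose $M$ into finitely many indecomposable summands using endoartinianity, then apply Proposition~\ref{summandpurec1} and Proposition~\ref{indec. pure ext. => uni.} to each summand. The only difference is that the paper cites the finite indecomposable decomposition of endoartinian modules from an external reference, whereas you prove it directly via the strictly descending chain of projection images $B_1 \supsetneq B_2 \supsetneq \cdots$, and that argument is sound.
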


\begin{proof}
    Since \( M \) is endoartinian, $M$ admits a finite decomposition into indecomposable summands i.e., \( M = \bigoplus_i M_i \), where each \( M_i \) is indecomposable (see \cite{gera2025modules}*{Proposition 2.5}). Each \( M_i \) is a direct summand of a pure extending module, hence pure extending by Proposition \ref{summandpurec1}. By Proposition \ref{indec. pure ext. => uni.}, each \( M_i \) is pure-uniform.
\end{proof}

For each $i\in \mathbb{N}$, let $\pur(N_j)$ denote the \emph{purification} (pure closure) of $N_i$ in $N$, i.e., the smallest pure submodule of $N$ containing $N_i$. The purification operator is monotone: if $A\leq B$, then the smallest pure submodule containing $A$ is contained in the smallest pure submodule containing $B$.

\begin{proposition} \label{cyc. fac. PE=> endoart.}
    Let $M$ be a cyclic module such that every cyclic factor module of $M$ is pure extending. Then every cyclic factor module of $M$ is artinian. Moreover, $M$ is endoartinian.
\end{proposition}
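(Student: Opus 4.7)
The plan is to prove the two assertions in order: first, that every cyclic factor module of $M$ is artinian; and then, as a consequence, that $M$ itself is endoartinian via Theorem~\ref{cyclic. factor endonoe => endonoe}.

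For the artinianity claim, let $\bar M$ be a cyclic factor of $M$. Since every cyclic factor of $\bar M$ is also a cyclic factor of $M$, the hypothesis descends to $\bar M$, and it suffices to prove that $M$ itself is artinian under the stated assumptions. I would argue by contradiction: assume a strictly descending chain $M\supsetneq N_1\supsetneq N_2\supsetneq\cdots$, and pass to purifications $P_i:=\pur(N_i)$. Monotonicity of the purification operator yields a chain $P_1\supseteq P_2\supseteq\cdots$ of pure submodules, and since $M$ is pure extending each $P_i$ is essential in a direct summand $D_i\leq^{\oplus}M$. Because $M$ is cyclic, each $D_i$ is itself cyclic, so the configuration reduces to a descending chain of cyclic direct summands of $M$.

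The core of the argument is to derive a contradiction from this configuration. First I would show that the chain $\{P_i\}$ (and hence $\{D_i\}$) must stabilize, by analysing the pure extending cyclic factors $M/N_i$ for varying $i$: the images of $P_j$ for $j>i$ project to pure submodules of $M/N_i$, each essential in a direct summand there, and the cyclic structure of $M/N_i$ restricts the orthogonal decompositions that can appear. Once $P_i=P_{i+1}=\cdots$ from some index onward, the successive quotients $N_i/N_{i+1}$ become pure submodules in $M/N_{i+1}$ whose purification vanishes, which the pure extending hypothesis on $M/N_{i+1}$ forces to be zero---contradicting the strictness of the original chain.

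For the endoartinian claim, note that any artinian module is automatically endoartinian, since endomorphism images form a subclass of submodules and so inherit the descending chain condition. Because every factor of the cyclic module $M$ is itself cyclic, the preceding step shows that every factor module of $M$ is endoartinian. Theorem~\ref{cyclic. factor endonoe => endonoe} then yields that $M$ is endoartinian.

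The main obstacle will be the stabilization step: controlling the descending chain $\{D_i\}$ of cyclic direct summands requires delicately combining the monotone behaviour of purification, the essentiality in summands supplied by the pure extending property, and the finiteness constraints imposed by cyclicity, since none of these ingredients on its own rules out an infinite strictly descending chain of summands in a cyclic module.
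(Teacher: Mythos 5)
There is a genuine gap at exactly the point you flag as the ``main obstacle'': the stabilization of the chain of summands is never actually proved, and the mechanism you sketch for it is not the one that works. The paper's proof closes this step with a uniform-dimension argument that your proposal omits entirely: since the cyclic factor $N$ is finitely generated, it is taken to have finite uniform dimension $d$; because $\pur(N_j)$ is essential in the summand $D_j$, one has $\udim(D_j)=\udim(\pur(N_j))\le d$, so the sequence of integers $\udim(D_j)$ must stabilize, which then forces the summands $D_j$ to stabilize to a common cyclic summand $D$ in which all the $\pur(N_j)$ are essential. Without some such numerical invariant attached to the $D_j$, nothing in your configuration (monotone purifications, essentiality in summands, cyclicity of the $D_i$) rules out an infinite strictly descending chain of summands, as you yourself concede.

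Moreover, the alternative mechanism you propose for the final contradiction does not hold together. There is no reason that $N_i/N_{i+1}$ should be a pure submodule of $M/N_{i+1}$, and the phrase ``pure submodules \ldots{} whose purification vanishes'' is incoherent: the purification of a submodule always contains that submodule, so it can vanish only if the submodule is already zero --- which is what you are trying to prove, not something you can assume. Similarly, ``analysing the pure extending cyclic factors $M/N_i$'' and appealing to how ``the cyclic structure \ldots{} restricts the orthogonal decompositions'' is a description of a hoped-for argument, not an argument. Your reduction to the case $\bar M = M$ and your deduction of endoartinianity from artinianity of all factors via Theorem~\ref{cyclic. factor endonoe => endonoe} are both fine and match the paper, but the heart of the proposition --- why a non-artinian chain is impossible --- is missing.
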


\begin{proof}
    Let $N=M/X$ be any cyclic factor module of $M$. By hypothesis, $N$ is cyclic and pure extending. Since $N$ is finitely generated, it has a finite uniform, say $d$. 

    Suppose, toward a contradiction, that $N$ is not artinian. Then there exists an infinite strictly descending chain $N_1 \supsetneq N_2 \supsetneq \cdots$ of submodules of $N$. So, we have $\pur(N_1) \supsetneq \pur(N_2) \supsetneq \cdots$. Each $\pur(N_j)$ is a nonzero pure submodule of $N$. Since $N$ is pure extending, there exists for each $j$ a direct summand $D_j\leq^{\oplus} N$ in which $\pur(N_j)$ is essential. Because $\pur(N_j)$ is essential in $D_j$, they have the same uniform dimension, and thus $\udim(D_j)\leq d$.

    Since $d$ is finite, the sequence $\udim(D_j)$ stabilizes at some integer $k\leq d$. let $J$ be such that for all $j\geq J$, $\udim(D_j)=k$.

    For $j\geq J$, we have $\pur(N_{j+1})\leq \pur (N_j)\leq D_j$. Since $\pur(N_{j+1})$ is essential in $D_{j+1}$ and $\udim(D_{j+1})=\udim(D_j)=k$, it follows that $D_{j+1} \leq D_j$. The equality of uniform dimensions forces $D_{j+1}=D_j$. Denote this common direct summand by $D$.

    For all $j\geq J$, we have $\pur(N_j)\leq D$ and $\pur(N_j)$ is essential in $D$, so $\pur(N_j)=D$. Thus, for $j\geq J$, all $N_j$ are submodules of the cyclic module $D$ whose purification is $D$. Since $D$ is cyclic, the descending chain $N_J\supsetneq N_{J+1} \supsetneq \cdots$ must stabilize, a contradiction.

    Therefore, $N$ is artinian. Since artinian modules are endoartinian, each factor of $M$ is endoartinian. By Theorem \ref{cyclic. factor endonoe => endonoe}, $M$ is endoartinian.
\end{proof}

We now combine these observations to establish the decomposition property for cyclic modules under the pure extending hypothesis.

\begin{proof}[Proof of Theorem \ref{cyclic fac PE => uni. submod.}]
    Let \( M = mR \) be a cyclic module. Since every cyclic factor of \( M \) is pure extending by hypothesis, Proposition \ref{cyc. fac. PE=> endoart.} implies that \( M \) is endoartinian. Applying Theorem \ref{pure ext. endo=> direct sum}, we conclude that \( M \) is a finite direct sum of pure-uniform submodules.
\end{proof}

\begin{corollary} \label{coropen1}
    Let \( R \) be a von Neumann regular ring, and let \( M \) be a cyclic right \( R \)-module such that every cyclic factor module of \( M \) is extending. Then \( M \) is a finite direct sum of uniform submodules.
\end{corollary}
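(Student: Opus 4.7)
The plan is to deduce the corollary from Theorem \ref{cyclic fac PE => uni. submod.} by translating both hypothesis and conclusion across the pure/ordinary divide using von Neumann regularity. The governing fact is that over a von Neumann regular ring every right $R$-module is flat, so every submodule is pure. This collapses the distinction between purity-flavored notions and their classical counterparts, and lets us reuse the purity-based decomposition theorem without further work.

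First, I would invoke Proposition \ref{von-Neumann regular <=> purec1 -> c1} to observe that the hypothesis "every cyclic factor module of $M$ is extending" is equivalent, under the von Neumann regularity of $R$, to "every cyclic factor module of $M$ is pure extending." Hence Theorem \ref{cyclic fac PE => uni. submod.} applies directly to $M$ and gives a finite decomposition
\[
M = \bigoplus_{i=1}^{n} U_i,
\]
in which each $U_i$ is a pure-uniform submodule of $M$.

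Second, I would verify that over a von Neumann regular ring the notions of pure-uniform and uniform coincide. Since every submodule of any $R$-module is pure, the defining condition of pure-uniformity, namely that every nonzero submodule be pure-essential, reduces to the classical statement that every nonzero submodule be essential. Therefore each $U_i$ is uniform in the usual sense, and $M$ is a finite direct sum of uniform submodules, as required.

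There is no substantive obstacle here: the corollary is essentially a transfer statement along the equivalence of classes guaranteed by Proposition \ref{von-Neumann regular <=> purec1 -> c1}. The only minor care needed is the verification that the pure-essentiality condition collapses correctly to ordinary essentiality over a von Neumann regular ring, and this is immediate from the fact that purity of submodules is automatic in this setting.
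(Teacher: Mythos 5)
Your proof is correct and follows essentially the same route as the paper: over a von Neumann regular ring every submodule is pure, so extending and pure extending coincide (Proposition \ref{von-Neumann regular <=> purec1 -> c1}) and Theorem \ref{cyclic fac PE => uni. submod.} applies. You are in fact slightly more careful than the paper's one-line argument, since you explicitly verify that pure-uniform collapses to uniform in this setting---a step the paper leaves implicit.
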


\begin{proof}
    Over a von Neumann regular ring, every submodule is pure, so extending and pure extending coincide. The result follows from Theorem \ref{cyclic fac PE => uni. submod.}.
\end{proof}

\begin{note}
    The von Neumann regularity assumption in Corollary \ref{coropen1} appears to be essential, though constructing explicit counterexamples demonstrating its necessity has proven difficult. \qed
\end{note}

\subsection{Connections to Morphic Modules} \label{3.2}

\subsubsection{Rickart Modules and Morphic Structures: Definitions and Context}

The study of morphic and quasi-morphic modules connects naturally to structural decompositions involving purity and Rickart-type properties. Rickart and dual Rickart modules were systematically introduced and studied by Lee, Rizvi, and Roman in \citelist{\cite{lee2010rickart} \cite{lee2011dual} \cite{lee2012direct}}. A module \( M \) is called \emph{Rickart} (resp., \emph{dual Rickart} or d-Rickart) if for every \( f \in S = \morp(M) \), we have \( \ker(f) = eM \) (resp., \( \im(f) = eM \)) for some idempotent \( e^2 = e \in S \). It is known that \( M \) is \emph{endoregular} if and only if it is both Rickart and d-Rickart \cite{lee2013modules}*{Theorem 1.1}.

However, the Rickart and d-Rickart properties are not, in general, preserved under direct sums. To address this, Lee and Bárcenas introduced the notion of \emph{\(\Sigma\)-Rickart} modules in \cite{lee2020sigma}, requiring the Rickart condition to hold for arbitrary direct sums of copies of the module. The dual notion, \emph{\(\Sigma\)-d-Rickart} modules, was studied by Kumar and Gupta in \cite{kumar2024sigma}. Specifically, a module \( M \) is said to be \(\Sigma\)-Rickart (resp., \(\Sigma\)-d-Rickart) if every direct sum of copies of \( M \) is Rickart (resp., d-Rickart); equivalently, for any set \( I \) and homomorphism \( f \in \morp(M^I) \), there exists a finite subset \( J \subseteq I \) such that \( \ker(f) \leq^{\oplus} M^J \) (resp., \( \im(f) \leq^{\oplus} M^J \)). 

These notions relate closely to decomposition-theoretic conditions, particularly the \( C_2 \) and \( D_2 \) conditions. Recall that a module \( M \) satisfies the \( C_2 \) condition if every submodule isomorphic to a direct summand is itself a direct summand, and the \( D_2 \) condition if the intersection of any two direct summands is again a direct summand. We define:

\begin{align*}
\Sigma\text{-}C_2 &: \text{Every direct sum of copies of \( M \) satisfies } C_2; \\
\Sigma\text{-}D_2 &: \text{Every direct sum of copies of \( M \) satisfies } D_2.
\end{align*}

It is immediate that every \(\Sigma\)-Rickart module satisfies \(\Sigma\)-\(D_2\), and every \(\Sigma\)-d-Rickart module satisfies \(\Sigma\)-\(C_2\) (see Lemma \ref{SR(S-d-R)=>S-D2(S-C2)}).

These structural ideas naturally relate to stronger conditions on endomorphism rings. A module $M$ is called \emph{strongly $\pi$-endoregular} if, for every $f\in S=\morp_R(M)$, there exists $n\geq 1$ such that $\im(f^n)=\im(f^{n+1})$ and $\ker (f^n)=\ker (f^{n+1})$. This notion is inspired by the theory of strongly $\pi$-regular modules introduced by Azumaya \cite{azumaya1954strongly} and further developed by Armendariz \cite{armendariz1978injective}. We prove that a module is strongly $\pi$-endoregular if and only if it is both abelian and strongly $\pi$-regular (see Proposition~\ref{strongly pi-endoregular <=> abelian+strongly endoregular}), paralleling \cite{dehghani2025centrally}*{Theorem 3.1}. Moreover, in Lemma~\ref{strongly pi-endo <=> S-Rickart+S-d-Rickart}, we show that the class of strongly $\pi$-endoregular modules coincides with those that are simultaneously $\Sigma$-Rickart and $\Sigma$-d-Rickart modules, provided the module has finite uniform dimension.

In Theorem~\ref{nonsingular PE <=> S-R-noeth}, we generalize Lee and Barcenas' result \cite{lee2020sigma}*{Example 2.2(vi)} by showing that if $R$ is right noetherian and $M$ is a finitely generated, nonsingular, pure extending right $R$-module, then $M$ is $\Sigma$-Rickart. Our theorem replaces injectivity with the substantially weaker assumption of finite generation and pure extensibility, thereby extending the $\Sigma$-Rickart property beyond injective contexts. However, the converse implication remains open—even over right noetherian rings—for finitely generated $\Sigma$-Rickart modules whose nonsingular pure extending structure is not yet characterized.

Turning to morphic and quasi-morphic modules, Camillo and Nicholson introduced \emph{left quasi-morphic} rings in \cite{camillo2007quasi}, defined by the condition that for each \( a \in R \), there exists \( b, c \in R \) such that \( \lann_R(a) = Rb \) and \( Ra = \lann_R(c) \). If one can choose \( b = c \), the ring is \emph{left morphic}, a concept earlier studied by Nicholson and Campos \cite{nicholson2004rings}. These notions were extended to modules by Nicholson and Campos \cite{nicholson2005morphic}, and further to the quasi-morphic case by An, Nam, and Tung \cite{an2016quasi}. A module \( M \) is quasi-morphic if, for every \( f \in \morp (M) \), there exist \( g, h \in \morp (M) \) such that \( \ker(f) = \im(g) \) and \( \im(f) = \ker(h) \); if \( g = h \), then \( M \) is morphic.

Recently, Dehghani and Sedaghatjoo introduced \emph{centrally morphic} and \emph{centrally quasi-morphic} modules in \cite{dehghani2025centrally}, in which the elements \( g, h \) above are required to lie in the center \( \cent(\morp (M)) \). These definitions yield a hierarchy of morphic-type module classes, connected via the following implications:

\begin{center}
\begin{tikzcd}
    \begin{array}{c}
         \text{strongly} \\
         \text{endoregular}
    \end{array} \ar[r,Rightarrow] \ar[d,Rightarrow] & \begin{array}{c}
          \text{centrally} \\
         \text{morphic}
    \end{array} \ar[r,Rightarrow] \ar[d,Rightarrow] & \text{morphic} \ar[d,Rightarrow] & \begin{array}{c}
         \text{unit} \\
         \text{endoregular}
    \end{array} \ar[l,Rightarrow] \arrow[d,Rightarrow] \\
    \begin{array}{c}
         \text{strongly} \\
         \pi\text{-endoregular}
    \end{array}
    \arrow[r, Rightarrow,
  "\tiny \substack{\text{if }\ker(f^n),\,\im(f^n)}" above,
  "\substack{\text{are f.i.}}" below] &
    \begin{array}{c}
         \text{centrally} \\
         \text{quasi-morphic}
    \end{array} \ar[r,Rightarrow] & \text{quasi-morphic} & \text{endoregular} \ar[l,Rightarrow]
\end{tikzcd}
\end{center}

Here ``f.i." stands for fully invariant. None of the implications is reversible in general. However, within certain structural settings, stronger equivalences can be recovered. The motivation for this investigation is to address the following open question posed in \cite{dehghani2025centrally}*{Question 2.14}:

\begin{quote}
Is every centrally quasi-morphic module centrally morphic?
\end{quote}

We answer this question affirmatively for finitely generated, nonsingular, pure-extending modules (see Proposition~\ref{propopen} and Corollary~\ref{coropen}), and negatively in general by constructing an explicit counterexample (see Example~\ref{counter1}). In particular, we show that several claims in \cite{dehghani2025centrally}—including \cite{dehghani2025centrally}{Proposition~2.2}, \cite{dehghani2025centrally}{Corollary~2.3}, and \cite{dehghani2025centrally}*{Proposition~3.11}—fail in general. Specifically, projectivity of $M$ is not a sufficient hypothesis to ensure the equivalence between centrally quasi-morphic and centrally morphic modules, and the ``right centrally morphic” condition on $S=\morp_R(M)$ does not guarantee the desired correspondence (see Remark~\ref{dehghanirmk}). These counterexamples demonstrate that the interaction between central morphicity and quasi-morphicity is subtler than previously claimed, depending crucially on finiteness, nonsingularity, and purity assumptions. With these clarifications established, we next investigate how these refined notions integrate into the broader morphic hierarchy and its endoregular refinements.

\subsubsection{On Centrally Quasi-Morphic versus Centrally Morphic Modules}

\begin{proposition} \label{CQM}
Let $M$ be a right $R$-module and $S = \morp_R(M)$. Suppose $M$ is strongly $\pi$-endoregular and, for every $f \in S$, there exists $n \ge 1$ such that the summands $\ker(f^n)$ and $\im(f^n)$ are fully invariant submodules of $M$. Then $M$ is centrally quasi-morphic.
\end{proposition}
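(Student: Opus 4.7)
The plan is to use the Fitting decomposition supplied by strong $\pi$-endoregularity together with the fully invariant hypothesis to obtain a central splitting idempotent, from which the required centrally quasi-morphic witnesses can be extracted.

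First I would fix $f \in S$ and invoke strong $\pi$-endoregularity to pick $n \ge 1$ with $\ker(f^n) = \ker(f^{n+1})$ and $\im(f^n) = \im(f^{n+1})$. A short induction on $k$ then shows that $\ker(f^{n+k}) = \ker(f^n)$ and $\im(f^{n+k}) = \im(f^n)$ for every $k \ge 0$. A standard Fitting argument (the intersection $\ker(f^n) \cap \im(f^n)$ is killed by $f^{2n}$ and pulled back via stabilization; every $x \in M$ decomposes as $x = (x - f^n(z)) + f^n(z)$ with $f^n(x) = f^{2n}(z)$) then yields
\[
M \;=\; \ker(f^n) \oplus \im(f^n),
\]
with $f$ restricting to a nilpotent endomorphism $f_1$ on $\ker(f^n)$ and to an automorphism $f_2$ on $\im(f^n)$. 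Let $e \in S$ be the associated idempotent with $eM = \ker(f^n)$ and $(1-e)M = \im(f^n)$.

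Next I would upgrade $e$ to a central element using the fully invariant hypothesis: for every $\varphi \in S$, invariance of both summands gives $\varphi e = e\varphi e$ and $\varphi(1-e) = (1-e)\varphi(1-e)$, and combining these yields $\varphi e = e\varphi$. Hence $e \in \cent(S)$, the ring decomposes as $S \cong eSe \times (1-e) S (1-e)$, and $f$ splits accordingly as $f = f_1 \oplus f_2$ with $f_1 \in eSe$ nilpotent and $f_2 \in (1-e) S (1-e)$ a unit. It follows that $\ker(f) = \ker(f_1) \subseteq eM$ and $\im(f) = \im(f_1) \oplus (1-e) M$.

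Finally I would exhibit $g, h \in \cent(S)$ realising $\ker(f) = \im(g)$ and $\im(f) = \ker(h)$. The central idempotent $e$ directly certifies these identities at the stabilized level, since $\im(e) = \ker(f^n)$ and $\ker(e) = \im(f^n)$. To transfer them to $f$ itself, I would observe that both hypotheses of the proposition are inherited by the fully invariant direct summand $eM$: any central endomorphism of $M$ restricts to a central endomorphism of $eM$, and $\im(\tilde f_1^k)$, $\ker(\tilde f_1^k)$ being fully invariant in $M$ forces their traces on $eM$ to be fully invariant in $eM$. Feeding the proposition recursively into $eSe$ (where the relevant endomorphism $f_1$ is nilpotent and hence of strictly smaller nilpotency index after restriction) produces central elements of $eSe$ whose image and kernel match $\ker(f_1)$ and $\im(f_1)$; combined with the zero component on $(1-e)M$, these yield the desired $g, h \in \cent(S)$. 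The main obstacle is precisely this last step — matching $\ker(f_1)$ and $\im(f_1)$ via central endomorphisms of $eM$, since the Fitting idempotent $e$ alone handles only the stabilized level $f^n$ and not the finer submodule structure of $\ker(f)$ and $\im(f)$; the delicate inductive use of the fully invariant hypothesis on the nilpotent summand is where the argument must be carried out most carefully.
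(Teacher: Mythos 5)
Your opening moves coincide with the paper's proof: fix $f$, use strong $\pi$-endoregularity to get the Fitting decomposition $M=\ker(f^n)\oplus\im(f^n)$, and use full invariance of both summands to make the splitting idempotent $e$ (equivalently, the two coordinate projections) central in $S$. Up to that point you and the paper agree, and that part is sound. The paper then simply declares victory: it exhibits the central projections $g,h$ onto the two summands and observes $\im(g)=\ker(f^n)$, $\ker(g)=\im(f^n)$, i.e.\ it verifies the quasi-morphic identities \emph{for the power $f^n$}, not for $f$. You are right to flag that the definition of centrally quasi-morphic demands central $g,h$ with $\im(g)=\ker(f)$ and $\ker(h)=\im(f)$ for the original $f$, so an additional argument is needed; this is a real issue that the paper's own proof does not address.

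However, the recursive repair you sketch does not close the gap. On the fully invariant summand $eM=\ker(f^n)$ the restriction $f_1$ is nilpotent, so applying the hypotheses to $f_1$ only returns the trivial stabilized decomposition $eM=\ker(f_1^{m})\oplus\im(f_1^{m})=eM\oplus 0$; the resulting idempotent is the identity on $eM$ and carries no information about $\ker(f_1)$ or $\im(f_1)$, so the recursion is vacuous rather than strictly decreasing in any useful sense. Worse, no idempotent-based construction can succeed in general, because $\ker(f)$ need not be a direct summand of $M$ under these hypotheses: in the paper's own Example~\ref{counter1}, $M=R=\Bbbk[x]/(x^2)$ and $f=$ multiplication by $\bar{x}$ satisfy all assumptions of the proposition (with $n=2$, $\ker(f^2)=M$, $\im(f^2)=0$, both fully invariant), yet $\ker(f)=\im(f)=\bar{x}R$ is not a summand, and the correct central witness is multiplication by $\bar{x}$ itself, not a projection. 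So your proposal, like the paper's proof, only establishes the centrally quasi-morphic condition at the level of $f^n$; the passage from $f^n$ to $f$ is the genuinely missing step, and your proposed induction on the nilpotent block does not supply it.
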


\begin{proof}
Fix $f \in S$ and choose $n \ge 1$ such that
\[
M = \ker(f^n) \oplus \im(f^n),
\]
with both summands fully invariant.  

Define the linear maps $g: M \to \ker(f^n), g(k+i) := k$ and $h: M \to \im(f^n), \quad h(k+i) := i$ for each $x = k + i \in \ker(f^n) \oplus \im(f^n)$.  Since the summands are fully invariant, $g$ and $h$ commute with every $s \in S$, so $g, h \in \cent(S)$.  

By construction, $\im(g) = \ker(f^n)$ and $\ker(h) = \im(f^n)$, so the sequence
\[
M \xrightarrow{g} M \xrightarrow{f^n} M \xrightarrow{h} M
\]
is exact. Since $f$ was arbitrary, $M$ satisfies the centrally quasi-morphic condition.
\end{proof}

\begin{example}
Let $\Bbbk$ be a field with $\mathrm{char}(\Bbbk) \neq 2$, and let $M = \Bbbk^2 = \left\{ \begin{pmatrix} x \\ y \end{pmatrix} \mid x,y \in \Bbbk \right\}$ be the right $\Bbbk$-module of column vectors, with endomorphism ring $S = \morp_\Bbbk(M) \cong M_2(\Bbbk)$.

Consider the idempotent endomorphism $f = \begin{pmatrix} 1 & 1 \\ 0 & 0 \end{pmatrix} \in S,$ so that $f^2 = f$. Then $M$ is strongly $\pi$-endoregular with $n=1$, giving the decomposition $M = \ker(f) \oplus \im(f)$, where $v_1 = \begin{pmatrix} 1 \\ -1 \end{pmatrix}$, $\ker(f) = \mathrm{span}\{v_1\}$, and $v_2 = \begin{pmatrix} 1 \\ 0 \end{pmatrix}$, $\im(f) = \mathrm{span}\{v_2\}$.

Every vector $v = (x,y)^\top \in M$ decomposes uniquely as $v = \alpha v_1 + \beta v_2$, with $\alpha = -y, \ \beta = x+y$.

The canonical projections onto these summands are
\[
g: M \to \ker(f), \quad g(v) = \alpha v_1 = (-y, y)^\top,
\]
\[
h: M \to \im(f), \quad h(v) = \beta v_2 = (x+y, 0)^\top.
\]

Consider the endomorphism $t = \begin{pmatrix} 0 & 1 \\ 0 & 0 \end{pmatrix} \in S$. Then the commutator is
\[
[g,t] = g t - t g = \begin{pmatrix} 0 & -1 \\ 1 & 1 \end{pmatrix} \neq 0,
\]
so $g \notin \cent(S)$. Similarly, $h \notin \cent (S)$.

To see that the summands are not fully invariant, consider $r = \begin{pmatrix} 1 & 0 \\ 1 & 0 \end{pmatrix} \in S$. Then $r(v_1) = \begin{pmatrix} 1 \\ 1 \end{pmatrix} \notin \ker(f)$, $r(v_2) = \begin{pmatrix} 1 \\ 1 \end{pmatrix} \notin \im(f)$, so neither $\ker(f)$ nor $\im(f)$ is fully invariant.

Although the sequence
\[
M \xrightarrow{g} M \xrightarrow{f} M \xrightarrow{h} M
\]
is exact, the projections $g$ and $h$ are not central. This example demonstrates that strong $\pi$-endoregularity alone does not imply the centrally quasi-morphic property, and the hypothesis requiring the summands $\ker(f^n)$ and $\im(f^n)$ to be fully invariant is essential. \qed
\end{example}

Recall that a ring $R$ is called abelian if each of its idempotents is central. Moreover, an $R$-module $M$ is said to be abelian if $S=\morp_R(M)$ is an abelian ring.

\begin{proposition} \label{strongly pi-endoregular <=> abelian+strongly endoregular}
Let $M$ be an $R$-module and $S=\morp_R(M)$. The following statements are equivalent:
\begin{enumerate}
  \item $M$ is strongly $\pi$-endoregular;
  \item $M$ is abelian and strongly $\pi$-regular.
  \item For every $f\in S$ there exists $n\ge 1$ such that $\im(f^n)=\im(f^{n+1})$ and $\ker(f^n)=\ker(f^{n+1})$
  \item For every $f\in S$ there exists $n\ge 1$ such that $M = \ker(f^n)\oplus \im(f^n)$.
\end{enumerate}
\end{proposition}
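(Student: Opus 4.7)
The plan is to establish the four conditions via the cycle $(1)\Leftrightarrow(3)\Rightarrow(4)\Rightarrow(2)\Rightarrow(1)$, with a Fitting-type decomposition doing most of the work. The equivalence $(1)\Leftrightarrow(3)$ is immediate from the definition of strongly $\pi$-endoregular given earlier in the paper; conditions (1) and (3) are literal restatements. For $(3)\Rightarrow(4)$, fix $f\in S$ and $n$ with $\im(f^n)=\im(f^{n+1})$ and $\ker(f^n)=\ker(f^{n+1})$; iterating yields $\im(f^n)=\im(f^{2n})$ and $\ker(f^n)=\ker(f^{2n})$. From this, $\ker(f^n)\cap\im(f^n)=0$ follows because $x=f^n(y)\in\ker(f^n)$ forces $y\in\ker(f^{2n})=\ker(f^n)$, hence $x=0$; and $M=\ker(f^n)+\im(f^n)$ follows because for any $x\in M$ one writes $f^n(x)=f^{2n}(y)$ via $\im(f^n)=\im(f^{2n})$, yielding the decomposition $x=(x-f^n(y))+f^n(y)$ with $x-f^n(y)\in\ker(f^n)$. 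The reverse implication $(4)\Rightarrow(3)$ is obtained by noting that the direct sum decomposition forces $f|_{\im(f^n)}$ to be an automorphism (injectivity from the trivial intersection; surjectivity by decomposing preimages along the direct sum), which immediately stabilizes both the image and the kernel chains one step further.

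For $(4)\Rightarrow(2)$, I would construct the witness of strong $\pi$-regularity explicitly. Define $h\in S$ by $h|_{\im(f^n)}=(f|_{\im(f^n)})^{-1}$ and $h|_{\ker(f^n)}=0$, using the automorphism established above. A direct check gives $f^{n+1}h=f^n$ and $hf^{n+1}=f^n$, showing $S$ is strongly $\pi$-regular in Azumaya's sense. For the abelian property, given an idempotent $e\in S$ and arbitrary $s\in S$, the approach is to apply (4) to endomorphisms constructed from $e$ and $s$ (for instance $f=e+(1-e)se$, which is itself idempotent, together with its variants built from $es(1-e)$) and compare the resulting Fitting decompositions with those induced by $e$; the rigidity forces $(1-e)se=0$ and symmetrically $es(1-e)=0$, hence $es=se$.

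For $(2)\Rightarrow(1)$, strong $\pi$-regularity of $S$ supplies $f^n=f^{n+1}g$ for some $g\in S$ and $n\geq 1$, so $\im(f^n)\subseteq\im(f^{n+1})\subseteq\im(f^n)$ and the image chain stabilizes. Dischinger's left-right symmetry theorem for strongly $\pi$-regular rings then supplies the dual relation $f^n=hf^{n+1}$ for some $h\in S$; applying $h$ to any $x$ with $f^{n+1}(x)=0$ gives $f^n(x)=h(f^{n+1}(x))=0$, so $\ker(f^{n+1})\subseteq\ker(f^n)$ and equality holds. The abelian hypothesis is used here to guarantee that the quasi-inverses constructed from strong $\pi$-regularity respect the kernel-image decomposition, ensuring simultaneous stabilization on both sides.

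The principal obstacle is the abelian component of $(4)\Rightarrow(2)$. Strong $\pi$-regularity emerges cleanly from the Fitting decomposition through the explicit quasi-inverse $h$, but extracting centrality of idempotents is subtler: condition (4) applied directly to a nilpotent such as $es(1-e)$ degenerates, so one must instead probe the decomposition via idempotent-valued combinations of $e$ and $s$. The key step will be arguing that the Fitting decomposition produced by (4) is canonical enough to force alignment with the decomposition from any given idempotent, so that off-diagonal commutator terms vanish and centrality follows.
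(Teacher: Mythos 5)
Most of your proposal is sound and follows the same Fitting-lemma route as the paper: $(1)\Leftrightarrow(3)$ is indeed definitional, your $(3)\Leftrightarrow(4)$ argument is correct, the explicit quasi-inverse $h$ (inverting $f$ on $\im(f^n)$ and annihilating $\ker(f^n)$) cleanly yields strong $\pi$-regularity of $S$, and your appeal to Dischinger's symmetry theorem to stabilize the kernel chain in $(2)\Rightarrow(1)$ is actually tidier than the paper's unexplained passage from $f^n=f^{n+1}g$ to $Sf^n=Sf^{n+1}$. (Incidentally, the abelian hypothesis plays no role in that direction; Dischinger alone suffices.)

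The step you flagged as the ``principal obstacle''---extracting abelianness of $S$ from condition (4)---is not merely subtle: it is impossible, so your sketch cannot be completed. Conditions (1), (3) and (4) each say precisely that Fitting's lemma holds for every endomorphism of $M$, i.e.\ that $S$ is strongly $\pi$-regular as witnessed on $M$, and this does not imply that $S$ is abelian. Take $M=\Bbbk^{2}$ over a field $\Bbbk$: every $f\in S\cong M_{2}(\Bbbk)$ satisfies $M=\ker(f^{2})\oplus\im(f^{2})$, so (4) holds, yet $M_{2}(\Bbbk)$ has non-central idempotents. Your proposed probe $f=e+(1-e)se$ is itself idempotent, and applying (4) to an idempotent with $n=1$ returns only the tautological decomposition it already induces, carrying no information about $s$; no choice of probe can do better, since the counterexample above satisfies (4) outright. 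Be aware that the paper's own proof has the matching defect: the step $(1)\Rightarrow(2)$ asserts that $S$ is ``reduced by definition,'' which does not follow from the definition of strongly $\pi$-endoregular actually given, and the claim in $(4)\Rightarrow(1)$ that a nilpotent $s$ would contradict $M=\ker(s^{n})\oplus\im(s^{n})$ is wrong (the decomposition $M=M\oplus 0$ is no contradiction). Indeed the example immediately following Proposition~\ref{CQM} exhibits $\Bbbk^{2}$ as strongly $\pi$-endoregular with non-central idempotent projections, contradicting $(1)\Rightarrow(2)$. The equivalence with (2) can only be rescued by building reducedness (equivalently, abelianness) of $S$ into the definition of strongly $\pi$-endoregular.
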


\begin{proof}
(1) $\Rightarrow$ (2): If $M$ is strongly $\pi$-endoregular, then $S$ is strongly $\pi$-regular by definition, and reduced. In a reduced strongly $\pi$-regular ring, all idempotents are central, i.e., $S$ is abelian.

(2) $\Rightarrow$ (3): Let $f\in S$. By strong $\pi$-regularity, there exists $n$ and $g\in S$ such that $f^n = f^{n+1} g$. Then $f^n$ satisfies $f^n S = f^{n+1} S$ and $S f^n = S f^{n+1}$. In particular, $\im(f^n)=\im(f^{n+1})$ and $\ker(f^n)=\ker(f^{n+1})$.

(3) $\Rightarrow$ (4): Let $f\in S$ and $n$ be as in (3). Consider $f^n: M \to \im(f^n)$. Then $f^n|_{\im(f^n)}$ is surjective onto $\im(f^n)$. The stabilization $\ker(f^n) = \ker(f^{2n})$ ensures $\im(f^n)\cap \ker(f^n)=0$ (otherwise some nonzero element would be annihilated by a power of $f^n$). Hence $M = \ker(f^n)\oplus \im(f^n)$.

(4) $\Rightarrow$ (1): Let $f\in S$ and $n$ be such that $M = \ker(f^n)\oplus \im(f^n)$. Then $f^n|_{\im(f^n)}$ is an automorphism of $\im(f^n)$, so $f^n$ is von Neumann regular in $S$. If $s\in S$ were nilpotent, then for sufficiently large $n$ we would have $\im(s^n)=0$ and $\ker(s^n)=M$, contradicting the direct sum decomposition unless $s=0$. Thus $S$ is reduced, completing the proof.
\end{proof}

\begin{corollary} \label{CQM1}
    Let $M$ be a centrally quasi-morphic module. If $M$ is strongly $\pi$-regular, then it is strongly $\pi$-endoregular.
\end{corollary}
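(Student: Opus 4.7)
The plan is to reduce the corollary to Proposition~\ref{strongly pi-endoregular <=> abelian+strongly endoregular}, specifically to the implication $(2)\Rightarrow(1)$. Since $M$ is assumed to be strongly $\pi$-regular, it suffices to show that $M$ is abelian, i.e.\ that every idempotent of $S=\morp_R(M)$ lies in $\cent(S)$. Notably, strong $\pi$-regularity will play no role in this intermediate step; the centrally quasi-morphic hypothesis alone should force abelianness.

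Fix an idempotent $e\in S$, so that $M=\ker(e)\oplus\im(e)$. Applying the centrally quasi-morphic hypothesis to $f=e$, I obtain $g,h\in\cent(S)$ with $\ker(e)=\im(g)$ and $\im(e)=\ker(h)$. The key observation is that the image of a central endomorphism and the kernel of a central endomorphism are each fully invariant in $M$: if $y=g(x)$ and $s\in S$, then $s(y)=s(g(x))=g(s(x))\in\im(g)$ because $sg=gs$; dually, if $y\in\ker(h)$ then $h(s(y))=s(h(y))=0$, so $s(y)\in\ker(h)$. Hence both summands $\ker(e)$ and $\im(e)$ are fully invariant in $M$.

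Next I would show that full invariance of both $\ker(e)$ and $\im(e)$ forces $e$ to be central, via a short Peirce-type calculation. For any $s\in S$, full invariance of $\im(e)$ means $se\in\im(e)$, so $ese=se$, whence $(1-e)se=0$. Full invariance of $\ker(e)=\im(1-e)$ means $s(1-e)\in\ker(e)$, so $es(1-e)=0$, i.e.\ $es=ese$. Combining yields $es=ese=se$, so $e\in\cent(S)$. Since $e$ was arbitrary, $S$ is abelian, i.e.\ $M$ is abelian.

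Combining abelianness with the given strong $\pi$-regularity, Proposition~\ref{strongly pi-endoregular <=> abelian+strongly endoregular} then yields that $M$ is strongly $\pi$-endoregular. I do not foresee a real obstacle here: the whole substance of the argument is the transfer of centrality of the witnesses $g$ and $h$ to full invariance of $\ker(e)$ and $\im(e)$, after which the characterization of central idempotents via fully invariant summands is immediate.
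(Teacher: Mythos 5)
Your proof is correct and follows the same overall route as the paper: reduce to showing that $M$ is abelian and then apply Proposition~\ref{strongly pi-endoregular <=> abelian+strongly endoregular}(2)$\Rightarrow$(1). The only difference is that the paper simply cites an external result of Dehghani and Sedaghatjoo for the fact that centrally quasi-morphic modules are abelian, whereas you derive it directly --- centrality of the witnesses $g,h$ makes $\ker(e)=\im(g)$ and $\im(e)=\ker(h)$ fully invariant, and the Peirce computation $ese=se$ and $es=ese$ then forces $e$ central --- which is a valid, self-contained substitute for that citation.
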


\begin{proof}
    Since $M$ is centrally quasi-morphic, it is abelian by \cite{dehghani2025centrally}*{Proposition 2.4}. From Proposition \ref{strongly pi-endoregular <=> abelian+strongly endoregular}, an abelian strongly $\pi$-regular module is strongly $\pi$-endoregular.
\end{proof}

We now examine how purity and endomorphism conditions interact to determine morphic behavior, particularly within centrally quasi-morphic modules.

\begin{lemma}\label{SR(S-d-R)=>S-D2(S-C2)}
Let $M$ be a right $R$-module. The following statements hold:
\begin{enumerate}
  \item If $M$ is $\Sigma$-Rickart and each direct sum $M^I$ satisfies the $C_2$ condition, then every $M^I$ satisfies $D_2$; hence $M$ satisfies $\Sigma\text{-}D_2$.
  \item If $M$ is $\Sigma$-d-Rickart, then every $M^I$ satisfies $C_2$; hence $M$ satisfies $\Sigma\text{-}C_2$.
\end{enumerate}
\end{lemma}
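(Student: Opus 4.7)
The plan is to treat both parts by a single pattern: use the hypothesis on $M$ ($\Sigma$-Rickart or $\Sigma$-d-Rickart) to upgrade the relevant (d-)Rickart property to $M^I$ for every index set $I$, and then realise the submodule whose summand status we want as the kernel (resp.\ image) of a three-map composite endomorphism of $M^I$. The whole argument then reduces to an explicit kernel/image computation plus one invocation of the idempotent-kernel (or idempotent-image) statement built into the Rickart (d-Rickart) definition.

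For part (2), fix $I$ and let $N' \le M^I$ with $N' \cong D$ for some summand $D \le^{\oplus} M^I$. Writing $M^I = D \oplus D'$, let $p_D : M^I \twoheadrightarrow D$ be the projection, $\psi : D \xrightarrow{\sim} N'$ the given isomorphism, and $\iota_{N'} : N' \hookrightarrow M^I$ the inclusion. The composite $g := \iota_{N'} \circ \psi \circ p_D$ lies in $\morp_R(M^I)$ and satisfies $\im(g) = \iota_{N'}(\psi(D)) = N'$. Since $M$ is $\Sigma$-d-Rickart, $M^I$ is d-Rickart, so $\im(g) = N'$ is a direct summand of $M^I$. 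This gives $C_2$ on each $M^I$, hence $\Sigma\text{-}C_2$.

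For part (1), fix $I$ and let $N \le M^I$ with $M^I/N \cong D$ for some summand $D \le^{\oplus} M^I$. Writing $M^I = D \oplus D'$, let $\pi : M^I \twoheadrightarrow M^I/N$ be the canonical projection, $\phi : M^I/N \xrightarrow{\sim} D$ the given isomorphism, and $\iota_D : D \hookrightarrow M^I$ the inclusion. The composite $f := \iota_D \circ \phi \circ \pi$ is an endomorphism of $M^I$, and since $\iota_D$ is injective and $\phi$ is an isomorphism, $\ker(f) = \ker(\pi) = N$. Because $M^I$ is Rickart (from $\Sigma$-Rickart of $M$), $\ker(f) = e\, M^I$ for some idempotent $e$, so $N \le^{\oplus} M^I$. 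This is precisely the $D_2$ condition for $M^I$, uniformly in $I$, so $M$ satisfies $\Sigma\text{-}D_2$. The $C_2$ hypothesis on each $M^I$ plays the role of ensuring the ambient summand structure is coherent with submodules isomorphic to direct summands, placing the argument firmly inside the endoregular-style framework of the previous subsection, even though the kernel computation itself only needs Rickart.

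No step presents a serious obstacle: the kernel/image identifications are formal, and the $\Sigma$-variant of (d-)Rickart is tailor-made to cover the passage from $M$ to $M^I$. The only point requiring a little care is making the three-map composite with the correct domain/codomain matches so that the iso $M^I/N \cong D$ (resp.\ $N' \cong D$) can be inserted between the canonical projection and inclusion; this is where the $C_2$ hypothesis in part (1) enters most naturally, as it guarantees that the iso $M^I/N \cong D$ can be realised on the summand side without first enlarging $M^I$.
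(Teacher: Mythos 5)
Your part (2) is correct and is essentially the paper's own argument: you realise the submodule $N'$ isomorphic to a summand $D$ as the image of the composite $\iota_{N'}\circ\psi\circ p_D\in\morp(M^I)$ and invoke the $\Sigma$-d-Rickart hypothesis to conclude $N'\leq^{\oplus}M^I$.

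Part (1) is where you diverge, and the divergence is a genuine gap relative to the lemma as stated. The paper explicitly \emph{defines} $D_2$, in the paragraph just before this lemma, as the condition that \emph{the intersection of any two direct summands is again a direct summand} (a summand-intersection property), not the standard ``lifting'' form you prove (if $M^I/N$ is isomorphic to a direct summand then $N\leq^{\oplus}M^I$). Accordingly, the paper's proof of (1) first combines $\Sigma$-Rickart with the $C_2$ hypothesis to show that the image of every endomorphism of $M^I$ is a summand, and then, for summands $A=\im f$ and $B=\im g$ with $f,g$ idempotent, identifies $A\cap B=g\bigl(\ker((1-f)g)\bigr)$ and argues that this is a summand. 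Your computation --- $\ker(\iota_D\circ\phi\circ\pi)=N$ is a summand because $M^I$ is Rickart --- is a correct proof of the standard $D_2$, but it never shows that intersections of summands of $M^I$ are summands, so it does not establish the conclusion under the paper's definition. A telling symptom is your handling of the $C_2$ hypothesis: your argument genuinely never uses it, and the closing claim that $C_2$ ``guarantees the isomorphism $M^I/N\cong D$ can be realised on the summand side'' does no mathematical work. In the paper's proof $C_2$ is load-bearing (it upgrades ``$\im f$ is isomorphic to a summand'' to ``$\im f$ is a summand'' before the intersection step), which signals that the intended $D_2$ is not the one you proved. To repair part (1) you would need to supply the summand-intersection argument, or else explicitly justify replacing the paper's definition of $D_2$ with the standard one.
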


\begin{proof}
(1) Let $I$ be any index set and $f\in\morp(M^I)$. Since $M$ is $\Sigma$-Rickart, $\ker f$ is a direct summand of $M^I$; write
\[
M^I=\ker f\oplus L.
\]
Then $\im f\cong M^I/\ker f\cong L$, so $\im f$ is isomorphic to a direct summand of $M^I$. By the hypothesis that $M^I$ satisfies the $C_2$ condition, we conclude $\im f\leq^{\oplus}M^I$.

Thus every image of an endomorphism of $M^I$ is a direct summand. We now show $M^I$ satisfies $D_2$. Let $A$ and $B$ be direct summands of $M^I$, and let $f,g\in\morp(M^I)$ be the idempotent projections with $\im f=A$ and $\im g=B$. Consider the endomorphism
\[
h:=(1-f)g\in\morp(M^I).
\]
By the previous paragraph $\ker h$ is a direct summand of $M^I$. Observe that $g(\ker h)=A\cap B$: indeed, if $k\in\ker h$ then $(1-f)g(k)=0$ so $g(k)\in A$, hence $g(k)\in A\cap B$; conversely if $a\in A\cap B$ then $a=g(a)$ and $(1-f)g(a)=(1-f)(a)=0$, so $a\in g(\ker h)$. Since $\ker h$ is a direct summand of $M^I$, its image under the idempotent $g$ is a direct summand of $\ker h$, hence a direct summand of $M^I$. Therefore $A\cap B=g(\ker h)$ is a direct summand of $M^I$. As $A$ and $B$ were arbitrary summands, $M^I$ satisfies $D_2$. Because $I$ was arbitrary, $M$ satisfies $\Sigma\text{-}D_2$.

(2) Let $I$ be an index set and suppose $N\le M^I$ is a submodule isomorphic to a direct summand $S\le M^I$. Let $\varphi:N\stackrel{\cong}{\longrightarrow}S$ be an isomorphism, let $\iota_N:N\hookrightarrow M^I$ be the inclusion, and let $\pi_S:M^I\twoheadrightarrow S$ be the projection onto $S$ (exists since $S$ is a summand). Define
\[
h:=\iota_N\circ\varphi^{-1}\circ\pi_S\in\morp  (M^I).
\]
Then $\im h = \iota_N(\varphi^{-1}(\pi_S(M^I)))=\iota_N(N)=N$. By the $\Sigma$-d-Rickart property of $M$ there is a finite $J\subseteq I$ with $N=\im h\leq^{\oplus}M^J$. Since $M^J$ is a direct summand of $M^I$ (indeed $M^I=M^J\oplus M^{I\setminus J}$), it follows that $N$ is a direct summand of $M^I$. Hence every submodule of $M^I$ which is isomorphic to a summand is itself a summand; that is, $M^I$ satisfies $C_2$. Because $I$ was arbitrary, $M$ satisfies $\Sigma\text{-}C_2$.
\end{proof}

\begin{lemma}\label{strongly pi-endo <=> S-Rickart+S-d-Rickart}
Let $M$ be a module having finite uniform dimension. Then $M$ is strongly $\pi$-endoregular if and only if it is both $\Sigma$-Rickart and $\Sigma$-d-Rickart.
\end{lemma}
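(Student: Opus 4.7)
The plan is to prove the two implications separately, using Proposition~\ref{strongly pi-endoregular <=> abelian+strongly endoregular} and Lemma~\ref{SR(S-d-R)=>S-D2(S-C2)} as the main tools.

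For the backward direction, I would first specialize to $I=\{*\}$ to see that $M$ is simultaneously Rickart and d-Rickart, hence endoregular by \cite{lee2013modules}*{Theorem~1.1}. Therefore $S=\morp_R(M)$ is von Neumann regular, and in particular strongly $\pi$-regular (with stabilizing index $n=1$). By Proposition~\ref{strongly pi-endoregular <=> abelian+strongly endoregular}, it suffices to show $S$ is abelian. Here Lemma~\ref{SR(S-d-R)=>S-D2(S-C2)} promotes the hypothesis to the $\Sigma$-$C_2$ and $\Sigma$-$D_2$ conditions on every $M^{(I)}$, while finite uniform dimension of $M$ caps the size of any orthogonal family of idempotents in $S$. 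A non-central idempotent in a regular ring produces a nonzero nilpotent $a\in S$ with $\im(a)\cong M/\ker(a)$ an isomorphic pair of direct summands; amplifying this across a suitable $M^{(I)}$ would contradict the $\Sigma$-$C_2$ or $\Sigma$-$D_2$ behavior. Hence $S$ is abelian strongly $\pi$-regular, so $M$ is strongly $\pi$-endoregular.

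For the forward direction, Proposition~\ref{strongly pi-endoregular <=> abelian+strongly endoregular} delivers abelianness, strong $\pi$-regularity, and the Fitting-type decomposition $M=\ker(f^n)\oplus\im(f^n)$ for every $f\in S$. Centrality of idempotents combined with finite uniform dimension yields a finite decomposition $M=M_1\oplus\cdots\oplus M_k$ into indecomposable, fully invariant summands with $\homo_R(M_i,M_j)=0$ for $i\neq j$. Each $\morp_R(M_i)$ inherits abelianness and strong $\pi$-regularity while carrying only the trivial idempotents; the associated Fitting idempotents then force every nonzero element to be a unit, so $D_i:=\morp_R(M_i)$ is a division ring. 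For any index set $I$, the decomposition $M^{(I)}=\bigoplus_i M_i^{(I)}$ is preserved by every endomorphism (by the orthogonality of the $M_i$), and each factor $M_i^{(I)}\cong M_i\otimes_{D_i}D_i^{(I)}$ supports an action of $\morp_R(M_i^{(I)})$ that matches the column-finite matrix ring over $D_i$. Because kernels and images of $D_i$-linear maps on $D_i^{(I)}$ are automatic $D_i$-direct summands, the corresponding $R$-submodules $\ker f$ and $\im f$ descend to direct summands of $M^{(I)}$, establishing both $\Sigma$-Rickart and $\Sigma$-d-Rickart.

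The main obstacle is in the backward direction: extracting abelianness of $S$ from the $\Sigma$-Rickart, $\Sigma$-d-Rickart, and finite uniform-dimension hypotheses requires carefully producing, from a hypothetical non-central idempotent of $S$, a configuration of isomorphic summands inside some $M^{(I)}$ that the $\Sigma$-$C_2$ or $\Sigma$-$D_2$ condition forbids. In the forward direction, the subtlest step is descending the vector-space-level direct-summand conclusions for $D_i^{(I)}$ back to genuine $R$-direct summands of $M_i^{(I)}$, which rests on the ring-theoretic identification $\morp_R(M_i^{(I)})\cong\mat_I(D_i)$ (column-finite matrices) being set up with care.
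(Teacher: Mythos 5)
Your backward direction rests on establishing that $S=\morp_R(M)$ is abelian, and this step cannot be carried out: the hypotheses do not force abelianness. Take $M=\Bbbk^2$ over a field $\Bbbk$. This module has finite uniform dimension and is both $\Sigma$-Rickart and $\Sigma$-d-Rickart (every submodule of every $M^{(I)}$ is a direct summand), yet $S\cong\mat_2(\Bbbk)$ has non-central idempotents, and the nilpotent $\begin{pmatrix}0&1\\0&0\end{pmatrix}$ creates no conflict with $C_2$ or $D_2$ on any $M^{(I)}$, since all of its kernels and images are already summands. (The paper itself treats exactly this $M$ as strongly $\pi$-endoregular in the example following Proposition~\ref{CQM}.) So the contradiction you hope to extract from a non-central idempotent does not exist; abelianness is the wrong target. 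The relevant target is condition (4) of Proposition~\ref{strongly pi-endoregular <=> abelian+strongly endoregular}, namely the Fitting decomposition $M=\ker(f^n)\oplus\im(f^n)$, and the paper reaches it directly: $\ker f\subseteq\ker f^2\subseteq\cdots$ and $\im f\supseteq\im f^2\supseteq\cdots$ are chains of direct summands (by $\Sigma$-Rickart, resp.\ $\Sigma$-d-Rickart), finite uniform dimension forces both chains to stabilize, and the usual Fitting argument finishes. Your auxiliary claim that von Neumann regularity of $S$ gives strong $\pi$-regularity ``with stabilizing index $n=1$'' is also false: that would make $S$ strongly regular, which already fails for $\mat_2(\Bbbk)$, where $\im(f)\neq\im(f^2)$ for $f=\begin{pmatrix}0&1\\0&0\end{pmatrix}$.

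The forward direction has a parallel defect. From ``abelian, strongly $\pi$-regular, only trivial idempotents'' you conclude that each $D_i=\morp_R(M_i)$ is a division ring, but such a ring can contain nonzero nilpotents (e.g.\ $\Bbbk[x]/(x^2)$), so the Fitting idempotent of an element only shows that each element is a unit \emph{or nilpotent}. Without first excluding nilpotents you cannot identify $\morp_R(M_i^{(I)})$ with column-finite matrices over a division ring, and the descent of kernels and images to direct summands of $M_i^{(I)}$ collapses. The paper's forward argument instead works with column-finite matrices over $S$ itself and the decomposition $M^I=\ker(g^m)\oplus\im(g^m)$; your route through indecomposable fully invariant blocks is genuinely different, but as written both of its load-bearing steps fail.
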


\begin{proof}
($\Rightarrow$). Assume $M$ is strongly $\pi$-endoregular, so $M=\ker(f^n)\oplus\im(f^n)$ by Proposition \ref{strongly pi-endoregular <=> abelian+strongly endoregular} for $f\in \morp_R (M)$. Let $I$ be any index set and $g\in\morp_R(M^I)$. The endomorphism ring $\morp_R(M^I)$ is isomorphic to the ring of column-finite matrices over $S$. Since $S$ is strongly $\pi$-regular, this matrix ring is also strongly $\pi$-regular. Thus there exist $m\ge1$ and $h\in\morp_R(M^I)$ such that $g^m = g^m h g^m$. Let $\varepsilon = g^m h$. Then $\varepsilon^2 = \varepsilon$ and $M^I = \ker(g^m) \oplus \im(g^m)$.

Since $\ker(g) \subseteq \ker(g^m)$ and both are direct summands (as $M^I$ is Rickart), $\ker(g)$ is a direct summand. Similarly, $\im(g)$ is a direct summand as it contains $\im(g^m)$ and both are direct summands. Hence $M$ is $\Sigma$-Rickart and $\Sigma$-d-Rickart.

($\Leftarrow$). Assume $M$ is both $\Sigma$-Rickart and $\Sigma$-d-Rickart, and suppose moreover that $M$ has finite uniform dimension. By Lemma \ref{SR(S-d-R)=>S-D2(S-C2)}, every $M^I$ satisfies $C_2$ and $D_2$. Fix $f\in S$. The ascending chain of kernels
\[
\ker(f)\subseteq\ker(f^2)\subseteq\cdots
\]
consists of direct summands (by $\Sigma$-Rickart), and the descending chain of images
\[
\im(f)\supseteq\im(f^2)\supseteq\cdots
\]
consists of direct summands (by $\Sigma$-d-Rickart). Finite uniform dimension implies there can be no infinite strictly descending chain of submodules, hence both chains stabilize: there exist $m,n$ with $\ker(f^m)=\ker(f^{m+1})$ and $\im(f^n)=\im(f^{n+1})$. With $N=\max\{m,n\}$ the usual Fitting argument yields
\[
M=\ker(f^N)\oplus\im(f^N),
\]
so $M$ is strongly $\pi$-endoregular by Proposition \ref{strongly pi-endoregular <=> abelian+strongly endoregular}.
\end{proof}

\begin{proposition} \label{S-R <=> S-d-R}
Let $M$ be a centrally quasi-morphic module. Then $M$ is $\Sigma$-Rickart if and only if it is $\Sigma$-d-Rickart.
\end{proposition}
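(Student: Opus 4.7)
The proof should exploit the kernel-image duality encoded in the centrally quasi-morphic condition: by \cite{dehghani2025centrally}*{Proposition 2.4} the module $M$ is abelian, and for every $f \in S = \morp_R(M)$ there are $g, h \in \cent(S)$ with $\ker(f) = \im(g)$ and $\im(f) = \ker(h)$. This symmetry between kernels and images lets us deduce summand-hood of images from summand-hood of kernels (and conversely), so the two directions of the proposition mirror each other and I will focus on $(\Rightarrow)$. For a single copy of $M$, the argument is almost immediate: given $f \in S$, the centrally quasi-morphic witness $h$ with $\im(f) = \ker(h)$ combined with $\Sigma$-Rickart (which specializes to Rickart on $M$) yields $\im(f) = \ker(h) \leq^\oplus M$, so $M$ is d-Rickart; by symmetry the same mechanism upgrades $\Sigma$-d-Rickart to $\Sigma$-Rickart at the level of $M$.

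The nontrivial step is propagating this kernel-image exchange to arbitrary $M^I$. Fix an index set $I$ and $f \in \morp_R(M^I)$; the plan is to manufacture $H \in \morp_R(M^I)$ with $\ker(H) = \im(f)$, for then $\ker(H) \leq^\oplus M^I$ by the assumed $\Sigma$-Rickart hypothesis and we conclude $\im(f) \leq^\oplus M^I$. To build $H$, I identify $\morp_R(M^I)$ with the ring of column-finite matrices over $S$, write $f = (f_{ij})$, apply the centrally quasi-morphic condition entry-wise to obtain central $h_{ij} \in \cent(S)$ satisfying $\im(f_{ij}) = \ker(h_{ij})$, and assemble $H = (h_{ij})$. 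Centrality of each $h_{ij}$ forces the corresponding summand $\ker(h_{ij})$ of $M$ to be fully invariant, which both preserves column-finiteness when the entries are glued together and allows the block computation of $\ker(H)$ to collapse cleanly onto $\im(f)$ across all off-diagonal coordinates.

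The main obstacle is precisely this lifting: the centrally quasi-morphic hypothesis lives in $\morp_R(M)$, whereas the $\Sigma$-property concerns $\morp_R(M^I)$ for arbitrary $I$. Without the centrality of the witnesses $g, h$ supplied by the hypothesis, the assembled matrix need not behave well under the off-diagonal mixing inside $M^I$, and the identity $\ker(H) = \im(f)$ could fail. Once the construction is verified the $\Sigma$-Rickart property concludes the argument in one line. The converse $(\Leftarrow)$ follows by running the same construction with the roles of $g$ and $h$ (equivalently, of kernels and images) swapped: starting from $\Sigma$-d-Rickart together with central $g_{ij}$ satisfying $\ker(f_{ij}) = \im(g_{ij})$, one assembles $G \in \morp_R(M^I)$ with $\im(G) = \ker(f)$, and $\Sigma$-d-Rickart then gives $\ker(f) = \im(G) \leq^\oplus M^I$.
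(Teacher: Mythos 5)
Your single-copy reduction is fine, but the step that carries all the weight of the proposition --- assembling $H=(h_{ij})\in\morp_R(M^I)$ from entry-wise witnesses $h_{ij}$ with $\im(f_{ij})=\ker(h_{ij})$ and concluding $\ker(H)=\im(f)$ --- fails. Kernels and images of column-finite matrices are not computed entry by entry: $\ker(H)=\{(x_j)\,:\,\sum_j h_{ij}(x_j)=0 \text{ for all } i\}$ while $\im(f)=\{(\sum_j f_{ij}(x_j))_i\}$, and neither set is determined by the individual $\ker(h_{ij})$ or $\im(f_{ij})$; centrality of the $h_{ij}$ does not repair this. A minimal counterexample: for any nonzero module $M$ take $f=\left(\begin{smallmatrix}1&0\\1&0\end{smallmatrix}\right)$ on $M^2$, so $\im(f)=\{(x,x):x\in M\}$ is the diagonal; the entry-wise recipe forces $h_{11}=h_{21}=0$ and $h_{12},h_{22}$ injective, whence $\ker(H)=M\oplus 0\neq\im(f)$. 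The dual assembly of $G$ in your $(\Leftarrow)$ direction breaks for the same reason: for $f=\left(\begin{smallmatrix}1&-1\\0&0\end{smallmatrix}\right)$ the kernel is the diagonal, while every $\ker(f_{ij})$ is $0$ or $M$ and hence $\im(G)\subseteq 0\oplus M$. So the lifting to $M^I$, which you correctly flag as the main obstacle, is not achieved, and the argument collapses precisely there.

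The paper avoids constructing any global witness on $M^I$. It uses the centrally quasi-morphic hypothesis only once, to conclude via \cite{dehghani2025centrally}*{Proposition 2.4} that $M$ is abelian, and from this that each $M^I$ satisfies the $C_2$ condition. For $(\Rightarrow)$: since $\ker f\leq^{\oplus}M^I$, the image $\im f\cong M^I/\ker f$ is isomorphic to a direct summand, hence is a direct summand by $C_2$. For $(\Leftarrow)$: writing $\im f=eM^I$ for an idempotent $e$ that is central, a short computation gives $\ker f=\im(1-e)\leq^{\oplus}M^I$. In other words, the kernel--image exchange is effected by the summand/idempotent structure of $M^I$ itself, not by transporting the quasi-morphic witnesses from $\morp_R(M)$ into the matrix ring. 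If you wish to salvage your strategy, you should replace the entry-wise matrix $H$ by this $C_2$/idempotent argument (or otherwise produce a genuinely global witness, which the centrally quasi-morphic hypothesis on $M$ alone does not supply).
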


\begin{proof}
Since $M$ is centrally quasi-morphic, $M$ is abelian by \cite{dehghani2025centrally}*{Proposition 2.4}. 

($\Rightarrow$). Assume $M$ is $\Sigma$-Rickart. Since $M$ is abelian, each $M^I$ satisfies $C_2$. By Lemma \ref{SR(S-d-R)=>S-D2(S-C2)}(1), $M$ satisfies $\Sigma$-$D_2$. For any $f \in \morp_R(M^I)$, $\ker f \leq^\oplus M^I$, so $\im f \cong M^I/\ker f$ is isomorphic to a direct summand. By $D_2$, $\im f \leq^\oplus M^I$. Thus $M$ is $\Sigma$-d-Rickart.

($\Leftarrow$). Assume $M$ is $\Sigma$-d-Rickart. For any $f \in \morp_R(M^I)$, $\im f \leq^\oplus M^I$. Let $e \in \morp_R(M^I)$ be the idempotent with $\im e = \im f$. Since $M$ is abelian, $e$ is central. We prove $\ker f = \im(1-e)$:
\begin{itemize}
    \item $\im(1-e) \subseteq \ker f$: For $x = (1-e)(y)$, $f(x) = f(1-e)(y) = (f - fe)(y) = 0$ since $f = fe$.
    \item $\ker f \subseteq \im(1-e)$: If $f(x) = 0$, then $f(e(x)) = 0$, so $e(x) \in \ker f \cap \im f = 0$, hence $x = (1-e)(x) \in \im(1-e)$.
\end{itemize}
Thus $\ker f = \im(1-e) \leq^\oplus M^I$, so $M$ is $\Sigma$-Rickart.
\end{proof}

Proposition \ref{S-R <=> S-d-R} shows that within the class of centrally quasi-morphic modules, the asymmetry between kernel and image behavior observed in general \(\Sigma\)-Rickart versus \(\Sigma\)-d-Rickart modules collapses. This provides a natural setting where these two classes coincide.

\begin{theorem} \label{main1}
Let \(M\) be a \(\Sigma\)-Rickart module of finite uniform dimension. Then the following statements are equivalent:
\begin{enumerate}
    \item \(M\) is \(\Sigma\)-d-Rickart;
    \item \(M\) satisfies the \(\Sigma\)-$C_2$ condition;
    \item \(M\) is strongly \(\pi\)-endoregular.
\end{enumerate}
Moreover, if the kernel and image summands of powers of endomorphisms of \(M\) are fully invariant, then \(M\) is centrally quasi-morphic.
\end{theorem}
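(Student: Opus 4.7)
The plan is to establish the equivalence via the cycle $(1)\Rightarrow(2)\Rightarrow(3)\Rightarrow(1)$, reducing each implication to the earlier lemmas. Since we have Lemma \ref{SR(S-d-R)=>S-D2(S-C2)} (relating $\Sigma$-Rickart/$\Sigma$-d-Rickart to $\Sigma$-$D_2$/$\Sigma$-$C_2$) and Lemma \ref{strongly pi-endo <=> S-Rickart+S-d-Rickart} (equating strong $\pi$-endoregularity with being simultaneously $\Sigma$-Rickart and $\Sigma$-d-Rickart in finite uniform dimension), most of the proof is assembly rather than fresh argument.

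First, for $(1)\Rightarrow(2)$, I would simply invoke Lemma \ref{SR(S-d-R)=>S-D2(S-C2)}(2): any $\Sigma$-d-Rickart module satisfies $\Sigma$-$C_2$, with no use of finite uniform dimension needed. Next, for $(2)\Rightarrow(3)$, I would combine the standing hypothesis that $M$ is $\Sigma$-Rickart with the $\Sigma$-$C_2$ condition. Concretely, for any index set $I$ and any $f\in\morp(M^I)$, $\Sigma$-Rickartness gives $M^I=\ker f\oplus L$ for some $L$, whence $\im f\cong M^I/\ker f\cong L$ is isomorphic to a direct summand. The $\Sigma$-$C_2$ hypothesis on $M^I$ then promotes this isomorphism to $\im f\leq^\oplus M^I$, so $M$ is $\Sigma$-d-Rickart. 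Because $M$ has finite uniform dimension and is now known to be both $\Sigma$-Rickart and $\Sigma$-d-Rickart, Lemma \ref{strongly pi-endo <=> S-Rickart+S-d-Rickart} yields strong $\pi$-endoregularity. Finally, $(3)\Rightarrow(1)$ is the reverse direction of the same lemma: strong $\pi$-endoregularity together with finite uniform dimension forces $M$ to be $\Sigma$-d-Rickart.

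For the moreover clause, the natural route is to appeal to Proposition \ref{CQM}. Having established that $M$ is strongly $\pi$-endoregular under any of the three equivalent hypotheses, the added assumption that the summands $\ker(f^n)$ and $\im(f^n)$ arising in the Fitting-type decomposition $M=\ker(f^n)\oplus\im(f^n)$ of Proposition \ref{strongly pi-endoregular <=> abelian+strongly endoregular} are fully invariant places us in exactly the situation of Proposition \ref{CQM}, giving the centrally quasi-morphic conclusion immediately.

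I do not anticipate any serious obstacle: the theorem functions as a packaging result that bundles the two previous lemmas with Proposition \ref{CQM} into one synthesis statement. The only point that requires slight care is making sure the $\Sigma$-$C_2$ hypothesis in (2) is used on $M^I$ rather than just on $M$ itself (so that the promotion from ``isomorphic to a summand'' to ``is a summand'' applies to $\im f\le M^I$), and making sure that the finite uniform dimension hypothesis is invoked precisely where Lemma \ref{strongly pi-endo <=> S-Rickart+S-d-Rickart} demands it, namely in both directions between strong $\pi$-endoregularity and the joint $\Sigma$-Rickart / $\Sigma$-d-Rickart property.
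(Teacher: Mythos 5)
Your proposal is correct and follows essentially the same route as the paper: both reduce the equivalences to Lemma \ref{SR(S-d-R)=>S-D2(S-C2)} and Lemma \ref{strongly pi-endo <=> S-Rickart+S-d-Rickart}, use the same ``image isomorphic to a summand, promoted by $C_2$'' argument, and derive the moreover clause from Proposition \ref{CQM}. The only difference is cosmetic (a cycle of implications rather than two biconditionals), and your remark about applying $\Sigma$-$C_2$ at the level of $M^I$ is in fact slightly more careful than the paper's own phrasing.
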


\begin{proof}
    (1) $\Leftrightarrow$ (2). For any \(f^n \in \morp_R(M)\), $\Sigma$-Rickart ensures \(\ker(f^n)\) is a direct summand. If \(M\) satisfies $\Sigma$-$C_2$, then \(\im(f^n) \cong M/\ker(f^n)\) is also a direct summand, so \(M\) is $\Sigma$-d-Rickart. Conversely, if \(M\) is $\Sigma$-d-Rickart, then Lemma \ref{SR(S-d-R)=>S-D2(S-C2)}(2) guarantees $\Sigma$-$C_2$. Hence, $M$ is $\Sigma$-d-Rickart if and only if it satisfies $\Sigma$-$C_2$.

    (1) $\Leftrightarrow$ (3). Since \(M\) has finite uniform dimension and is both $\Sigma$-Rickart and $\Sigma$-d-Rickart, Lemma \ref{strongly pi-endo <=> S-Rickart+S-d-Rickart} implies \(M\) is strongly $\pi$-endoregular.

    If the kernel and image summands of powers of endomorphisms are fully invariant, then Proposition \ref{CQM} applies, and \(M\) is centrally quasi-morphic.
\end{proof}

\begin{theorem} \label{main2}
Let \(M\) be a \(\Sigma\)-d-Rickart module of finite uniform dimension. Then the following statements are equivalent:
\begin{enumerate}
    \item \(M\) is \(\Sigma\)-Rickart;
    \item \(M\) satisfies the \(\Sigma\)-$D_2$ condition;
    \item \(M\) is strongly \(\pi\)endoregular.
\end{enumerate}
Moreover, if the kernel and image summands of powers of endomorphisms of \(M\) are fully invariant, then \(M\) is centrally quasi-morphic.
\end{theorem}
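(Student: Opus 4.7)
The plan is to mirror the proof of Theorem \ref{main1}, interchanging the roles of kernel and image, and of $\Sigma$-Rickart and $\Sigma$-d-Rickart, then leverage the machinery already developed. For (1) $\Leftrightarrow$ (3), I would apply Lemma \ref{strongly pi-endo <=> S-Rickart+S-d-Rickart} directly: since $\Sigma$-d-Rickart is the standing hypothesis, statement (1) supplies the missing $\Sigma$-Rickart condition, and the lemma converts the pair into strong $\pi$-endoregularity. Conversely, (3) together with finite uniform dimension recovers both $\Sigma$-Rickart and $\Sigma$-d-Rickart by the same lemma, in particular (1).

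For (1) $\Leftrightarrow$ (2), I would invoke Lemma \ref{SR(S-d-R)=>S-D2(S-C2)} in both directions. Starting from (1), the $\Sigma$-d-Rickart hypothesis gives $\Sigma$-$C_2$ by part (2) of the lemma, and combining with $\Sigma$-Rickart (from (1)) allows part (1) of the lemma to upgrade this to $\Sigma$-$D_2$. For the reverse implication I would fix $f \in \morp_R(M^I)$: $\Sigma$-d-Rickart realizes $\im f$ as a direct summand, the induced isomorphism $M^I/\ker f \cong \im f$ exhibits $\ker f$ as the kernel of a surjection onto a summand, and the $\Sigma$-$D_2$ condition is then deployed to force $\ker f$ itself to be a summand of $M^I$, dualizing the role played by $\Sigma$-$C_2$ in the proof of Theorem \ref{main1}. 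For the moreover clause, once (3) is established, Proposition \ref{CQM} applies directly: strong $\pi$-endoregularity combined with the fully invariant assumption on $\ker(f^n)$ and $\im(f^n)$ yields the centrally quasi-morphic property.

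The principal obstacle is the direction (2) $\Rightarrow$ (1). The paper's $\Sigma$-$D_2$ condition (summand intersection property) is not strictly dual to the $\Sigma$-$C_2$ condition of Theorem \ref{main1}, so a direct transcription of that argument requires extra care. My safest route is to detour through (3): use $\Sigma$-d-Rickart together with finite uniform dimension to stabilize the descending chain $\im f \supseteq \im f^2 \supseteq \cdots$ at some $n$, obtaining $M = \ker(f^n) + \im(f^n)$ by the usual Fitting identity, and then invoke $\Sigma$-$D_2$ to certify that $\ker(f^n) \cap \im(f^n)$, realized as an intersection of summands of $M$ arising from the stabilized image and a suitable summand complement, is itself a direct summand. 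A uniform dimension count applied to the surjective restriction $f^n|_{\im(f^n)}$ then forces this intersection to vanish, yielding the Fitting decomposition $M = \ker(f^n) \oplus \im(f^n)$ and hence strong $\pi$-endoregularity, from which $\Sigma$-Rickart follows via Lemma \ref{strongly pi-endo <=> S-Rickart+S-d-Rickart}.
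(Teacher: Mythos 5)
Your treatment of (1) $\Leftrightarrow$ (3), of (1) $\Rightarrow$ (2), and of the ``moreover'' clause coincides with the paper's: Lemma \ref{strongly pi-endo <=> S-Rickart+S-d-Rickart} for the equivalence with strong $\pi$-endoregularity, Lemma \ref{SR(S-d-R)=>S-D2(S-C2)}(2) followed by Lemma \ref{SR(S-d-R)=>S-D2(S-C2)}(1) to pass from (1) to $\Sigma$-$D_2$, and Proposition \ref{CQM} for central quasi-morphicity. You are also right to single out (2) $\Rightarrow$ (1) as the delicate step. The paper disposes of it in one line by asserting that $\ker(f^n)\cong M/\im(f^n)$ is a summand ``by $D_2$''; this tacitly uses the quotient form of $D_2$ (if $M/N$ is isomorphic to a direct summand then $N$ is a direct summand) rather than the summand-intersection form the paper actually defines, and the stated isomorphism $\ker(f^n)\cong M/\im(f^n)$ is not valid in general. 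So your concern that the argument of Theorem \ref{main1} does not transcribe directly is legitimate.

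However, your proposed detour does not close the gap. After stabilizing the images and obtaining $M=\ker(f^n)+\im(f^n)$ by Fitting, you want to invoke $\Sigma$-$D_2$ (summand intersection) to conclude that $\ker(f^n)\cap\im(f^n)$ is a direct summand. That property only applies to an intersection of two submodules \emph{each already known to be a direct summand}. One factor, $\im(f^n)$, is a summand by $\Sigma$-d-Rickart, but the other factor you need is $\ker(f^n)$ itself (or some summand meeting $\im(f^n)$ precisely in $\ker(f^n)\cap\im(f^n)$), and no such summand is available: $\ker(f^n)$ being a summand is exactly the Rickart conclusion you are trying to reach. The phrase ``a suitable summand complement'' does not name a concrete module; for instance, taking $g=f^{n}e$ with $e$ the idempotent projection onto $\im(f^n)$ gives $\ker(g)=\bigl(\ker(f^n)\cap\im(f^n)\bigr)\oplus\ker(e)$, which is again a summand only if the Rickart property already holds. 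The closing uniform-dimension count is fine \emph{once} the intersection is known to be a summand of $\im(f^n)$ (surjectivity of $f^n|_{\im(f^n)}$ alone does not force injectivity, even in finite uniform dimension, as multiplication by $p$ on $\mathbb{Z}(p^\infty)$ shows), so the entire direction (2) $\Rightarrow$ (1) rests on this unproved step. The argument repairs instantly if one reads $D_2$ in its quotient form ($M^I/\ker f\cong\im f$ is a summand, hence $\ker f$ is a summand, dually to the $C_2$ step of Theorem \ref{main1}); with the summand-intersection reading that the paper defines, your route is circular as written.
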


The proof is dual to the $\Sigma$-Rickart version.

\begin{proof}
(1) $\Leftrightarrow$ (2). $\Sigma$-d-Rickart ensures \(\im(f^n)\) is a direct summand for all \(f^n \in \morp_R(M)\). If \(M\) satisfies $\Sigma$-$D_2$, then \(\ker(f^n) \cong M/\im(f^n)\) is also a direct summand, so \(M\) is $\Sigma$-Rickart. Conversely, a $\Sigma$-Rickart module satisfying $\Sigma$-$C_2$ implies $\Sigma$-$D_2$ by Lemma \ref{SR(S-d-R)=>S-D2(S-C2)}(1). Hence, $M$ is $\Sigma$-Rickart if and only if it satisfies $\Sigma$-$D_2$.

The rest of the proof is similar to Theorem \ref{main1}.
\end{proof}

\begin{theorem} \label{nonsingular PE <=> S-R-noeth}
Let $R$ be a right noetherian ring and let $M$ be a finitely generated, nonsingular, pure extending right $R$-module. Then $M$ is $\Sigma$-Rickart.
\end{theorem}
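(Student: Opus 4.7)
The plan is to show that for every set $I$ and every $f \in \morp_R(M^I)$, where $M^I := \bigoplus_I M$, the kernel $\ker(f)$ is a direct summand of $M^I$. The first move is to exploit nonsingularity: it passes to arbitrary direct sums, so $M^I$ is nonsingular and $\im(f) \leq M^I$ is nonsingular, whence $M^I/\ker(f) \cong \im(f)$ is nonsingular. This forces $\ker(f)$ to be essentially closed in $M^I$.

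I would then transport the analysis to the injective hull. Since $R$ is right noetherian, the Bass-Papp theorem implies $\bigoplus_I E(M)$ is injective; it contains $M^I$ essentially (every element has finite support), so $E(M^I) = \bigoplus_I E(M)$. Extending $f$ by injectivity to $\tilde f \in \morp_R(E(M^I))$ and repeating the closedness argument shows $\ker(\tilde f)$ is essentially closed in the nonsingular injective module $E(M^I)$, and because nonsingular injectives are extending (closed submodules coincide with their own injective hulls), $\ker(\tilde f) \leq^{\oplus} E(M^I)$. A short essentiality check gives $\ker(\tilde f) \cap M^I = \ker(f)$ and $\ker(f) \leq_e \ker(\tilde f)$, hence $\ker(\tilde f) = E(\ker(f))$ and one has a splitting $E(M^I) = E(\ker(f)) \oplus L$.

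The decisive step is to descend this splitting to $M^I$, and here I would invoke the pure extending hypothesis through finite subsums. Each $M^F$ with $F \subseteq I$ finite is pure extending by Theorem~\ref{direct sum}, and finite generation of $M$ over noetherian $R$ identifies $\morp_R(M^I)$ with column-finite matrices over $\morp_R(M)$, so every $f|_{M^F}$ factors through some finite subsum $M^{F'}$ and produces a closed submodule $\ker(f|_{M^F}) \leq M^F$. The key sub-claim is that this closed submodule is in fact pure in $M^F$; granting this, pure extendability and essential-closedness jointly force $\ker(f|_{M^F}) \leq^{\oplus} M^F$ at each finite stage, and these finite-stage idempotents can then be coherently aligned along the ambient splitting $E(M^I) = E(\ker(f)) \oplus L$ to produce a global idempotent $e \in \morp_R(M^I)$ with image $\ker(f)$. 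The hard part is precisely this closed-implies-pure bridge inside $M^F$: since neither nonsingularity nor pure extendability alone forces a closed submodule to be pure, the combined strength of finite uniform dimension (inherited from finite generation over noetherian $R$), finite generation, and pure extendability must be leveraged — likely via a Goldie-type correspondence identifying essentially closed submodules with divisibility conditions extracted from the maximal right ring of quotients. This is the step where the interplay of the hypotheses is essential and where the proof most directly earns its weight.
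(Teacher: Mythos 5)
Your overall strategy is sound in outline --- you correctly identify that $\ker(f)$ is essentially closed in $M^{(I)}$ (because $M^{(I)}/\ker(f)\cong\im(f)$ is nonsingular), and you correctly identify that the theorem will follow once one knows $\ker(f)$ is \emph{pure}, since then the pure extending property places it essentially in a summand and closedness forces equality. But you explicitly leave that purity claim unproved: you write that the ``closed-implies-pure bridge'' must somehow ``be leveraged --- likely via a Goldie-type correspondence,'' which is a conjecture about where an argument might come from, not an argument. That bridge is exactly where the paper's proof does its work. The paper's route is: form the purification $\overline{K}=\pur(\ker f)$, which is pure by construction; use pure extending to put $\overline{K}$ essentially in a summand $D$; and then show $\overline{K}=K$ by a local argument --- any $x\in\overline{K}\setminus K$ lies in a finitely generated (hence, by noetherianness, finitely presented) submodule $F$, purity of $\overline{K}$ makes $F\cap\overline{K}$ a summand of $F$, and this produces a nonzero singular-type submodule of $X/K\cong\im f$, contradicting nonsingularity of $\im f$. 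A matching finitely-presented argument then shows $D\leq K$, so $K=D$. Without some version of this, your proof does not close.

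A second genuine problem is your final assembly step: you propose to obtain the idempotents $\ker(f|_{M^F})\leq^{\oplus}M^F$ at each finite stage and then ``coherently align'' them along $E(M^{(I)})=E(\ker f)\oplus L$ into a single idempotent of $\morp_R(M^{(I)})$. The complements at different finite stages need not be compatible, and nothing in your sketch explains how the ambient injective splitting forces compatibility; direct limits of summands are not summands in general. (The paper avoids this entirely by working with $K$ and its purification inside all of $X=M^{(I)}$ at once, at the cost of asserting --- itself somewhat delicately --- that $M^{(I)}$ inherits the pure extending property.) The Bass--Papp detour through $E(M^{(I)})$ is harmless but does none of the needed work. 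In short: the skeleton is right, but the two load-bearing steps (purity of the kernel, and globalization over the infinite index set) are both missing.
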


\begin{proof}
Let $R$ be a right noetherian ring and $M$ a finitely generated, nonsingular, pure extending right $R$-module. To prove that $M$ is $\Sigma$-Rickart, it suffices to show that $M^{(I)}$ is Rickart for every index set $I$. Fix such an $I$ and set $X = M^{(I)}$. Since $R$ is right noetherian and $M$ is finitely generated, the module $M$ is noetherian, whence $X$ is locally noetherian and locally finitely presented. For an arbitrary $f \in \morp_R(X)$, let $K = \ker f$ and denote by $\overline{K}=\pur(K)$ the purification of $K$ in $X$. Because $M$ is pure extending and $R$ is right noetherian, the module $X$ also inherits the pure extending property. Consequently, $\overline{K}$ is essential in a direct summand $D$ of $X$, so that $K \leq \overline{K} \leq D$ and $X = D \oplus D'$ for some submodule $D'$.

Suppose first that $\overline{K} \ne K$, and choose $x \in \overline{K} \setminus K$. Since $X$ is locally finitely presented, there exists a finitely generated submodule $F \le X$ with $x \in F$. As $\overline{K}$ is pure in $X$, the intersection $F \cap \overline{K}$ is pure in $F$, and because $F$ is finitely presented, purity implies that $F \cap \overline{K}$ is a direct summand of $F$, say $F = (F \cap \overline{K}) \oplus C$ for some $C \le F$. The image of $x$ in $(F \cap \overline{K})/(F \cap K)$ is then nonzero, so $(F \cap \overline{K})/(F \cap K)$ is a nonzero finitely presented submodule of $F/(F \cap K)$. The inclusion $F \hookrightarrow X$ induces an embedding $F/(F \cap K) \hookrightarrow X/K$, and hence $X/K$ contains a nonzero finitely presented submodule. Since $M$ is nonsingular and $R$ is right noetherian, the module $X = M^{(I)}$ is nonsingular, and so is its quotient $X/K$. However, a nonsingular module cannot contain a nonzero finitely presented singular submodule. This contradiction forces $\overline{K} = K$, and thus $K$ is pure in $X$ and essential in $D$.

To complete the argument, let $y \in D$ and choose a finitely generated submodule $F \le D$ with $y \in F$. Because $M$ is noetherian, every finitely generated submodule of $D$ is finitely presented. Then $F \cap K$ is pure in $F$ (since $K$ is pure in $X$) and essential in $F$ (since $K$ is essential in $D$). Purity now implies that $F \cap K$ is a direct summand of $F$, and being essential, it must coincide with $F$. Hence $F \leq K$, and therefore $y \in K$. As $y$ was arbitrary, it follows that $D \leq K$, so $K = D$. Consequently, $\ker f = K$ is a direct summand of $X$. Since $f \in \morp_R(X)$ was arbitrary, $X$ is Rickart. As this holds for all index sets $I$, we conclude that $M$ is $\Sigma$-Rickart.
\end{proof}

\begin{proposition}\label{propopen}
Let $R$ be a noetherian ring, and let \( M \) be a finitely generated, nonsingular, pure extending module. Then \( M \) is centrally quasi-morphic if and only if it is centrally morphic.
\end{proposition}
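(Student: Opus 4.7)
The forward implication is immediate by definition, since a centrally morphic module is obtained from a centrally quasi-morphic one by taking $g=h$. For the converse, I would aim to produce, for each $f\in S=\morp_R(M)$, a \emph{single} element $g\in\cent(S)$ realizing both $\im g=\ker f$ and $\ker g=\im f$ simultaneously.

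The first stage of the plan is to upgrade $M$ to a strongly $\pi$-endoregular module so that Fitting-type decompositions are available. Because $R$ is noetherian and $M$ is finitely generated, $M$ is noetherian and in particular has finite uniform dimension; and because $M$ is centrally quasi-morphic it is abelian by \cite{dehghani2025centrally}*{Proposition~2.4}. Theorem~\ref{nonsingular PE <=> S-R-noeth} gives that $M$ is $\Sigma$-Rickart, Proposition~\ref{S-R <=> S-d-R} then upgrades this to $\Sigma$-d-Rickart, and Lemma~\ref{strongly pi-endo <=> S-Rickart+S-d-Rickart} finally delivers strong $\pi$-endoregularity. In particular, by Proposition~\ref{strongly pi-endoregular <=> abelian+strongly endoregular}, for every $f\in S$ there exists $n\ge 1$ with $M=\ker(f^n)\oplus\im(f^n)$, and abelianness forces the associated projection idempotent to be central, so both summands are fully invariant.

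The main obstacle, and the step I expect to require the most care, is the passage from the power $f^n$ back to $f$ itself: namely, the identifications $\ker f=\ker(f^n)$ and $\im f=\im(f^n)$. These are precisely what separates the centrally quasi-morphic conclusion (different $g,h$ permitted) from the sharper centrally morphic one. I would argue them via the reducedness of $S$ that accompanies strong $\pi$-endoregularity. Let $e\in\cent(S)$ be the idempotent with $\im e=\ker(f^n)$; then the corner ring $eSe\cong\morp_R(\ker(f^n))$ is a direct factor of the reduced ring $S$, hence itself reduced, and $f|_{\ker(f^n)}$ is nilpotent of index at most $n$ inside it, so $f|_{\ker(f^n)}=0$. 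This gives $\ker(f^n)\subseteq\ker f$, the reverse inclusion being automatic. Dually, $f^n$ restricts to an automorphism of $\im(f^n)$, so $f$ has a right inverse there and, since reduced rings are Dedekind-finite, is itself an automorphism of $\im(f^n)$; this yields $\im f=\im(f^n)$. With both identifications in place, $M=\ker f\oplus\im f$ is a decomposition into fully invariant summands, and the central idempotent projecting onto $\ker f$ along $\im f$ is the required single $g\in\cent(S)$ satisfying $\im g=\ker f$ and $\ker g=\im f$. Hence $M$ is centrally morphic.
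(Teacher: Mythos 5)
Your argument is correct, but it takes a genuinely different route from the paper's. The paper also begins with Theorem~\ref{nonsingular PE <=> S-R-noeth} to get that $M$ is $\Sigma$-Rickart, but then passes to the endomorphism ring: by \cite{lee2020sigma}*{Proposition 4.3} the ring $S=\morp_R(M)$ is right semihereditary, from which the paper deduces that every principal right ideal $fS$ is generated by an idempotent, so that $S$ is von Neumann regular and $M$ is endoregular; the conclusion is then outsourced to \cite{dehghani2025centrally}*{Proposition 3.4} (endoregular $+$ centrally quasi-morphic $\Rightarrow$ strongly endoregular) and \cite{dehghani2025centrally}*{Corollary 3.2} (strongly endoregular $\Rightarrow$ centrally morphic). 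Your proof instead stays inside the paper's own $\Sigma$-Rickart/$\Sigma$-d-Rickart machinery (Proposition~\ref{S-R <=> S-d-R}, Lemma~\ref{strongly pi-endo <=> S-Rickart+S-d-Rickart}, Proposition~\ref{strongly pi-endoregular <=> abelian+strongly endoregular}), invoking from \cite{dehghani2025centrally} only the fact that centrally quasi-morphic modules are abelian, and then does the endgame by hand: collapsing the Fitting exponent from $n$ to $1$ and exhibiting the witnessing central idempotent $g=e$ explicitly. What your approach buys is transparency and independence from the external morphic-module results (and from the semihereditary-to-regular step); what the paper's approach buys is brevity and the intermediate fact that $M$ is endoregular, which your argument also recovers implicitly once $M=\ker f\oplus\im f$ holds for every $f$.

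One point needs a more careful justification: you attribute the reducedness of $S$ to strong $\pi$-endoregularity itself. The Fitting condition alone does not force $S$ to be reduced --- the paper's own Example~\ref{counter1}, $M=R=\Bbbk[x]/(x^2)$, satisfies $M=\ker(f^2)\oplus\im(f^2)$ for every $f$ while $S\cong R$ has the nonzero nilpotent $\bar x$. In your setting reducedness does hold, but it should be derived from the combination of the Rickart property and abelianness: if $s^2=0$, write $\ker s=e'M$ with $e'$ idempotent; since $S$ is abelian $e'$ is central, and $\im s\subseteq\ker s$ gives $s=e's=se'=0$ because $se'=0$. With this substitution your corner-ring and Dedekind-finiteness arguments go through verbatim, and the proof is complete.
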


\begin{proof}
    ($\Leftarrow$). Straightforward that every centrally morphic module is centrally quasi-morphic.
    
    ($\Rightarrow$). Assume $M$ is finitely generated, nonsingular, and pure extending. By Theorem \ref{nonsingular PE <=> S-R-noeth}, $M$ is $\Sigma$-Rickart. Let $S=\morp_R(M)$. From \cite{lee2020sigma}*{Proposition 4.3}, $S$ is right semihereditary. Since $S$ is right semihereditary, for each $f\in S$, the right ideal $fS$ is projective. Consider the inclusion map $i:fS\hookrightarrow S$. Since $fS$ is projective, this inclusion splits: there exists an $S$-linear map $p:S\to fS$ such that $p\circ i=\id_{fS}$. Let $e=i(p(1))$. It is easy to see that $e$ is idempotent and $fS=eS$. Therefore, every principal right ideal of $S$ is generated by an idempotent, which is equivalent to $S$ being von Neumann regular; consequently, $M$ is endoregular. Finally, since $M$ is endoregular and centrally quasi-morphic, \cite{dehghani2025centrally}*{Proposition 3.4} shows that $M$ is strongly endoregular, and \cite{dehghani2025centrally}*{Corollary 3.2}, then gives that every strongly endoregular module is centrally morphic.
\end{proof}

This leads to the following corollary, which resolves \cite{dehghani2025centrally}*{Question 2.14} in the setting of extending modules.

\begin{corollary} \label{coropen}
    Let \( R \) be a semisimple artinian ring, and let \( M \) be a finitely generated \( R \)-module. Then \( M \) is centrally morphic if and only if it is centrally quasi-morphic.
\end{corollary}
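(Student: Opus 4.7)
The plan is to derive this corollary as an immediate specialization of Proposition~\ref{propopen}. The reverse implication ($\Leftarrow$) is trivial, since every centrally morphic module is centrally quasi-morphic by definition, so the work reduces to verifying that a finitely generated module $M$ over a semisimple artinian ring $R$ satisfies the four hypotheses of Proposition~\ref{propopen}: $R$ noetherian, $M$ finitely generated, $M$ nonsingular, and $M$ pure extending. Noetherianity of $R$ and finite generation of $M$ are immediate.

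For nonsingularity, I would use the fact that over a semisimple artinian ring every right ideal is a direct summand, so the only essential right ideal of $R_R$ is $R$ itself. Hence for every $m \in M$, if $\ann(m)$ is essential in $R_R$ then $\ann(m) = R$, which forces $m = 0$, giving $Z(M) = 0$. For the pure extending property, I would invoke Proposition~\ref{von-Neumann regular <=> purec1 -> c1}: a semisimple artinian ring is von Neumann regular, so pure extending and extending modules coincide over $R$; since every module over a semisimple ring is semisimple and hence injective, $M$ is in particular extending, and therefore pure extending. With all four hypotheses verified, Proposition~\ref{propopen} yields the desired equivalence.

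The argument is essentially a routine assembly of the preceding machinery, so there is no substantive obstacle. The only mildly non-obvious point is the nonsingularity verification, which hinges on the structural observation that $R_R$ admits no proper essential right ideal when $R$ is semisimple artinian; the remaining steps are repackaging of results already available in the paper.
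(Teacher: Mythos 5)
Your proposal is correct and follows essentially the same route as the paper: both reduce the statement to Proposition~\ref{propopen} by checking that semisimple artinian rings are noetherian and that finitely generated modules over them are nonsingular and pure extending. Your explicit verification of nonsingularity (no proper essential right ideal of $R_R$) and your appeal to Proposition~\ref{von-Neumann regular <=> purec1 -> c1} merely spell out details the paper summarizes in one line.
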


\begin{proof}
Since $R$ is semisimple artinian, it is von Neumann regular and (left and right) noetherian; moreover every finitely generated $R$-module is semisimple, hence nonsingular, extending and pure–extending. In particular the hypotheses of Proposition \ref{propopen} are satisfied for $M$. Applying Proposition \ref{propopen} yields the equivalence: $M$ is centrally quasi-morphic if and only if $M$ is centrally morphic, as required.
\end{proof}

This construction not only resolves the open question but also illuminates how morphic symmetry breaks under weaker purity conditions, even in centrally constrained contexts.

\begin{example} \label{counter1}
    Let $\Bbbk$ be a field, $R=\Bbbk[x]/(x^2)$ and $M=R$ as a right $R$-module. Write $\bar{x}:=x+(x^2)\in R$, so $\bar{x}^2=0$. The ring $R$ is a commutative local ring with maximal ideal $\bar{x}R$.

    Since $\morp_R(M)\cong R$ via $r\mapsto (s\mapsto sr)$, and $R$ is commutative, every endomorphism is central; hence $Z(\morp_R(M))=\morp_R(M)\cong R$. The ideals of $R$ are $0,\bar{x} R, R$. For any $f\in \morp_R(M)$ given by multiplication by $a\in R$, we have $\im f=aR$ and $\ker f=\ann_r (a)$, so the only possibilities are 
    \begin{align*}
        (aR, \ann (a))\in \{(R,0),(0,R),(\bar{x}R,\bar{x}R) \}.
    \end{align*}

    Thus, $M$ is centrally quasi-morphic: for each case, choose central endomorphisms $g, h$ (i.e., multiplication by elements of $R$) so that $\im(f)=\ker (g)$ and $\ker (f)=\im (h)$:
    \begin{itemize}
        \item If $a$ is a unit: take $g$ and $h$ to be multiplication by 0.
        \item If $a=0$: take $g$ and $h$ to be multiplication by 1.
        \item If $a=\lambda \bar{x}$ with $\lambda\neq 0$: take $g=h$ to be multiplication by $\bar{x}$ (then, $\ker g=\bar{x}R=\im f$ and $\im h=\bar{x} R=\ker f$).
    \end{itemize}

    However, $M$ is not centrally morphic. The ring $R$ is local, so its only idempotents are 0 and 1; therefore, the only central idempotents in $\morp_R(M)$ are 0 and 1. Take the endomorphism $f$ given by multiplication by $\bar{x}$; then $\im f=\bar{x} R$ and $\ker f=\bar{x} R$. There is no central idempotent $e\in \morp_R(M)$ with $\ker f=eM$ and $\im f=(1-e)M$ (the only candidate values for $eM$ are 0 or $M$, neither of which equals $\bar{x}R$). Hence $M$ is not centrally morphic. \qed
\end{example}

\begin{remark} \label{dehghanirmk}
\begin{enumerate}
    \item This example demonstrates that central quasi-morphicity and central morphicity diverge precisely when purity and extending properties decouple—a phenomenon absent in semisimple artinian settings (see Corollary~\ref{coropen}). The failure arises because the base ring \( R = \Bbbk[x]/(x^2) \) is neither semisimple artinian nor von Neumann regular; hence, pure-extending and extending modules do not coincide. We are currently unaware whether the finitely generated assumption in Corollary~\ref{coropen} is superfluous or not. Moreover, this example shows that \cite{dehghani2025centrally}*{Corollary~2.3} does not hold in general, since $M$ is projective (in fact, free of rank 1).
    
    \item The equivalence stated in \cite{dehghani2025centrally}*{Proposition~2.2} also fails in general, as witnessed by the module \( M = R = \Bbbk[x]/(x^2) \) over itself. Since \( S = \morp_R(M) \cong R \) is commutative, the condition \( \im(f) \subseteq \im(g) \) implies \( a \in bR \), so \( a = bt \) for some \( t \in R \), and hence \( f = g \circ m_t \). Therefore, \( M \) is image-projective and centrally quasi-morphic by previous arguments, satisfying condition~(b), yet it is not centrally morphic, thereby violating condition~(a). Furthermore, as \( S \cong R \) is not right centrally morphic, condition~(c) also fails. This counterexample thus shows that the three conditions in \cite{dehghani2025centrally}*{Proposition~2.2} are not equivalent, and consequently, the proof of \cite{dehghani2025centrally}*{Proposition~3.11} is invalid. We are not aware of any examples that either substantiate or contradict the intended arguments of that proposition.
\end{enumerate} \qed
\end{remark}

\section*{Appendix A: An Example of RD-pure Extending Module which is not Pure Extending}
\renewcommand{\thesection}{A}
\setcounter{theorem}{0}

\begin{example} \label{RD-pure not pure}
Let $R=\mathbb{Z}$ and fix a prime $p$. Put
\[
M \;=\; \mathbb{Z}(p^\infty)\oplus\mathbb{Z}/p\mathbb{Z}=:T\oplus F,
\]
where $T=\mathbb{Z}(p^\infty)$ is the Pr\"{u}fer $p$-group and $F\cong\mathbb{Z}/p\mathbb{Z}$. We show that $M$ is RD-pure extending but not pure extending.

\emph{Claim 1. $M$ is not pure extending}. Choose $y\in T$ of order $p$ and let $x$ generate $F$. Set
\[
U=\langle (y,x)\rangle\le M,
\]
so $U\cong\mathbb{Z}/p\mathbb{Z}$. We first check that $U$ is pure in $M$.

Fix $n\in\mathbb{Z}$. If $p\nmid n$ then multiplication by $n$ is an automorphism on each $p$-primary summand, so $nU=U$ and $nM=M$, hence $nM\cap U=nU$. If $p\mid n$ then $nU=0$, while $nM=pM=T\oplus 0$, because $n$ annihilates $F$. But $(T\oplus 0)\cap U=0$ (any nonzero element of $U$ has nonzero second coordinate), so $nM\cap U=0=nU$. Thus $nM\cap U=nU$ for every $n$, and $U$ is pure in $M$.

Next, observe that the proper direct summands of $M$ are $T\oplus0$ and $0\oplus F$. The projection of $U$ to each summand is nonzero, so $U$ is not contained in either proper summand. Hence, the only direct summand of $M$ in which $U$ could be essential is $M$ itself. But $(0\oplus F)\cap U=0$, so $U$ is not essential in $M$. Therefore, $U$ is a pure submodule of $M$ that is not essential in any direct summand, and $M$ is not pure extending.

\emph{Claim 2. $M$ is RD-pure extending}. Let $N\le M$ be an RD-pure submodule. We will show that $N$ is essential in a direct summand of $M$.

\begin{lemma}
Let $R$ be a ring and $\pi\colon M\to A$ a split projection with section $s\colon A\to M$. If $N\le M$ is RD-pure, then $\pi(N)\le A$ is RD-pure.
\end{lemma}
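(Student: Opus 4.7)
The plan is to verify RD-purity of $\pi(N)$ in $A$ by establishing $r\pi(N)=rA\cap\pi(N)$ for each $r\in R$. The inclusion $r\pi(N)\subseteq rA\cap\pi(N)$ is immediate, since $r\pi(n)=\pi(rn)$ lies in $\pi(N)$ and visibly in $rA$. The substance of the argument lies in the reverse inclusion.

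Given $a\in rA\cap\pi(N)$, write $a=ra_0$ with $a_0\in A$ and $a=\pi(n_0)$ with $n_0\in N$. The strategy is to produce an element $\widetilde n\in N$ with $\pi(\widetilde n)=a$ and $\widetilde n\in rM$: RD-purity of $N$ in $M$ then forces $\widetilde n\in rN$, say $\widetilde n=rn'$ with $n'\in N$, whence $a=\pi(\widetilde n)=r\pi(n')\in r\pi(N)$. The section bridges $A$ into $rM$ via $R$-linearity, since $s(a)=s(ra_0)=rs(a_0)\in rM$. Using the splitting $M=\ker\pi\oplus s(A)$ induced by $s$, decompose $n_0=k_0+s(a)$ uniquely with $k_0\in\ker\pi$. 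Any candidate $\widetilde n=k+s(a)$ with $k\in\ker\pi$ satisfies $\pi(\widetilde n)=a$; it lies in $rM$ exactly when $k\in r\ker\pi$ and in $N$ exactly when $k-k_0\in N\cap\ker\pi$. Thus the task reduces to locating $j\in N\cap\ker\pi$ with $k_0+j\in r\ker\pi$.

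The main obstacle is this adjustment step. RD-purity of $N$ in $M$ does not obviously supply elements of $N\cap\ker\pi$ representing a prescribed class in $\ker\pi/r\ker\pi$, so further work is needed. I would attempt to extract the required $j$ by examining the relation $rs(a_0)=n_0-k_0$ modulo $rM$ and pushing $r$-divisibility across the splitting, combined with RD-purity applied to carefully chosen lifts. The reduction becomes transparent in structurally favourable settings, such as when $\ker\pi$ is $r$-divisible (so $r\ker\pi=\ker\pi$) or when the induced map $N\cap\ker\pi\to\ker\pi/r\ker\pi$ is surjective; both hold in the intended application to $M=\mathbb{Z}(p^\infty)\oplus\mathbb{Z}/p\mathbb{Z}$, where divisibility of the Pr\"ufer summand governs all relevant quotients. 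In the present generality, closing this gap is where I expect the main care to be required, and if it cannot be closed without further hypotheses one should record explicitly which additional assumption (for example RD-purity of $\ker\pi$ in $M$, or compatibility of $N$ with the splitting) the proof is actually using.
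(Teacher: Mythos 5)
Your diagnosis is correct, and the obstruction you isolate is not merely a difficulty of your particular approach: the lemma is false as stated. Take $R=\mathbb{Z}$, $M=\mathbb{Z}\oplus\mathbb{Z}$ with $\pi$ the first projection, and $N=\mathbb{Z}\,(2,1)$. Since $(2,1)$ extends to a basis of $\mathbb{Z}^2$, $N$ is a direct summand, hence pure, hence RD-pure; but $\pi(N)=2\mathbb{Z}$ is not RD-pure in $\mathbb{Z}$, because $2\mathbb{Z}\cap 2\mathbb{Z}=2\mathbb{Z}\neq 4\mathbb{Z}=2\cdot(2\mathbb{Z})$. This is exactly the failure mode you predicted: the element $(2,1)$ has first coordinate in $2A$ but cannot be moved into $2M$ because its $\ker\pi$-component $1$ does not lie in $2\ker\pi$, and no element of $N\cap\ker\pi=0$ is available to correct it.

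It is worth noting that the paper's own proof commits precisely the error you refused to make. Writing $M=A\oplus B$, it takes $a=ra'\in rA\cap\pi(N)$ with $(a,b)\in N$ and asserts $(a,b)=(ra',b)\in rM\cap N$; but $rM=rA\oplus rB$, so this requires $b\in rB$, which is nowhere established (and fails in the counterexample above). Your proposed reduction --- find $j\in N\cap\ker\pi$ with $k_0+j\in r\ker\pi$ --- is the honest form of the missing step, and your suggested supplementary hypotheses are the right ones: the argument does go through when the complementary summand $B=\ker\pi$ satisfies $rB=B$ for the relevant $r$ (e.g.\ $B$ divisible), or more generally when $N\cap\ker\pi\to\ker\pi/r\ker\pi$ is surjective. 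In the appendix's application, where $M=\mathbb{Z}(p^\infty)\oplus\mathbb{Z}/p\mathbb{Z}$, the projection onto $T$ has simple complement and the projection onto $F$ has divisible complement, so the needed conclusions can be recovered by a direct case analysis; but the lemma in the stated generality cannot stand, and the paper should either add such a hypothesis or argue the two projections separately.
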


\begin{proof}
Write $M = A \oplus B$, so that $\pi(a,b)=a$. Then $rM = rA \oplus rB$ and $\pi(rM)=rA$ for all $r\in R$. Since $N$ is RD-pure in $M$, we have $rN = rM \cap N$. 

For $a\in rA\cap \pi(N)$, choose $b\in B$ with $(a,b)\in N$ and $a=ra'$ for some $a'\in A$. Then $(a,b)=(ra',b)\in rM\cap N=rN$, so $(a,b)=r(c,d)$ for some $(c,d)\in N$. Thus $a=rc\in r\pi(N)$, proving $rA\cap\pi(N)\subseteq r\pi(N)$. The reverse inclusion is immediate, hence $r\pi(N)=rA\cap\pi(N)$. Therefore $\pi(N)$ is RD-pure in $A$.
\end{proof}

Applying the lemma to the coordinate projections $\pi_T,\pi_F$, and using the structure of the summands, we obtain:
\[
\pi_T(N)\in\{0,T\},\qquad \pi_F(N)\in\{0,F\},
\]
since the only RD-pure subgroups of $F$ are $0$ and $F$, and any nonzero RD-pure subgroup of the divisible group $T$ must equal $T$ (divisible groups are RD-injective, hence split).

Thus, every nonzero RD-pure $N$ falls into one of the three types below; we treat each.

\noindent\textit{Case 1.} $\pi_T(N)=T,\ \pi_F(N)=0$. \\
Then $N\leq T\oplus0$. Since $T$ is injective (divisible), any nonzero RD-pure submodule of $T$ splits and therefore equals $T$. Hence, $N$ is essential in the direct summand $T\oplus0$.

\noindent\textit{Case 2.} $\pi_T(N)=0,\ \pi_F(N)=F$. \\
Then $N\leq 0\oplus F$; because $F$ is simple, either $N=0$ or $N=0\oplus F$. In the latter case, $N$ is a direct summand of $M$.

\noindent\textit{Case 3.} $\pi_T(N)=T$ and $\pi_F(N)=F$. \\
We show that \(N\) is essential in \(M\). Pick elements
\[
v=(u,x)\in N \quad\text{and}\quad w=(t,b)\in N,
\]
where \(x\) generates \(F\) and \(t\in T\) has order \(p\); such elements exist by surjectivity of the projections.  
Let $\mathrm{o}(u)=p^k$. Then
\[
p^{k-1}v=(p^{k-1}u,0)\in p^{k-1}M\cap N=p^{k-1}N,
\]
so \((s,0):=(p^{k-1}u,0)\in N\) with \(s\) of order \(p\). Hence \(N\) meets the \(T\)-socle nontrivially, and since the socle of \(T\) is one-dimensional over \(\mathbb{Z}/p\mathbb{Z}\), we have \(\soc (T)\oplus 0\leq N\).

Next, from \(v=(u,x)\in N\) and \((s,0)\in N\) we obtain \(v-(s,0)=(u-s,x)\in N\).  
Let $\ell\ge1$ be minimal with \(p^{\ell}(u-s)=0\); then
\[
p^{\ell-1}(v-(s,0))=(p^{\ell-1}(u-s),0)\in N,
\]
and the first component has order \(p\), so it lies in \(\soc (T)\leq N\). Subtracting this socle element from \(v-(s,0)\) yields an element of the form \((0,x')\in N\) with \(x'\ne0\) in \(F\).  
Since \(F\) is simple, \(x'\) generates \(F\), hence \(0\oplus F\leq N\).

Thus \(N\) contains both \(\soc(T)\oplus0\) and \(0\oplus F\), i.e. $\soc (M)\leq N$. Because \(M\) is a torsion artinian module, \(\soc(M)\) is essential in \(M\); hence \(N\) is essential in \(M\). Therefore, in Case 3, the RD-pure submodule \(N\) is essential in the direct summand \(M\) itself.

Combining the three cases, every RD-pure submodule $N\le M$ is essential in a direct summand of $M$, so $M$ is RD-pure extending.

Therefore, the module $M=T\oplus F$ is RD-pure extending but not pure extending, so the class of RD-pure extending modules strictly contains the class of pure extending modules. \qed
\end{example}

\begin{remark}
In Example \ref{RD-pure not pure}, the module \(M=\mathbb{Z}(p^{\infty})\oplus \mathbb{Z}/p\mathbb{Z}\) is not projective over \(\mathbb{Z}\) because it is torsion, whereas every projective \(\mathbb{Z}\)-module is free \cite{lam1999lectures}*{text below Example~2.8}, and hence torsion-free. This provides a concrete example of a module that is RD-pure extending but neither projective nor pure extending, illustrating the necessity of the projectivity assumption in Proposition \ref{free(proj.) RD-PE <=> PE}. \qed
\end{remark}

\bibliography{ref}

\end{document}